\documentclass[11pt]{amsart}
\usepackage{amsmath,amssymb,amsthm,latexsym,cite,cancel}
\usepackage[small]{caption}
\usepackage{graphicx,wasysym,overpic,tikz,color}
\usepackage{subfigure}
\usepackage{cite}
\usepackage[colorlinks=true,urlcolor=blue,
citecolor=red,linkcolor=blue,linktocpage,pdfpagelabels,
bookmarksnumbered,bookmarksopen]{hyperref}
\usepackage[italian,english]{babel}
\usepackage{units}
\usepackage{enumitem}
\usepackage[left=2.1cm,right=2.1cm,top=2.71cm,bottom=2.71cm]{geometry}
\usepackage[hyperpageref]{backref}
\usepackage{float,bm}

\usepackage[colorinlistoftodos]{todonotes}
%


\newtheorem{theorem}{Theorem}[section]
\newtheorem{definition}[theorem]{Definition}

\newtheorem{lemma}[theorem]{Lemma}

\newtheorem{corollary}[theorem]{Corollary}

\newcommand{\abs}[1]{\lvert#1\rvert}
\newcommand{\norm}[1]{\lVert#1\rVert}
\newcommand{\red}[1]{\textcolor{red}{#1}}
\newcommand{\blue}[1]{\textcolor{blue}{#1}}

\newcommand{\G}{\mathcal{G}}

\newcommand{\E}{\mathrm{E}}

\newcommand{\R}{\mathbb{R}}
\newcommand{\C}{\mathbb{C}}

\tikzstyle{nodo}=[circle,draw,fill,inner sep=0pt,minimum size=%
1.5mm]

\numberwithin{equation}{section}

\title[A large class of elliptic equations on bounded domains]{Existence and multiplicity of normalized solutions to a large class of elliptic equations on bounded domains with general boundary conditions}

\author[C.O. Alves]{Claudianor O. Alves}

\address[C.O. Alves]{\newline\indent
	Unidade Acad\^{e}mica de Matem\'{a}tica
	\newline\indent
	Universidade Federal de Campina Grande
	\newline\indent
	PB CEP:58429-900, Brazil}
\email{\href{mailto: coalves@mat.ufcg.edu.br}{coalves@mat.ufcg.edu.br}}

\author[Z. He]{Zhentao He}

\address[Z. He]{\newline\indent
	School of Mathematics
	\newline\indent
	East China University of Science and Technology
	\newline\indent
	Shanghai 200237, PR China }
\email{\href{mailto:hezhentao2001@outlook.com}{hezhentao2001@outlook.com}}

\author[C. Ji]{Chao Ji}

\address[C. Ji]{\newline\indent
	School of Mathematics
	\newline\indent
	East China University of Science and Technology
	\newline\indent
	Shanghai 200237, PR China }
\email{\href{mailto:jichao@ecust.edu.cn}{jichao@ecust.edu.cn}}

\subjclass[2020]{35A15, 35J25, 35Q55.}

\date{\today}
\keywords{}

\begin{document}

\begin{abstract}In this paper, by adapting the perturbation method, we study the existence and multiplicity of normalized solutions for the following  nonlinear Schr\"odinger equation
	$$
	\left\{
	\begin{array}{ll}
		-\Delta u = \lambda u + f(u)\quad & \text{in } \Omega, \\
		\mathcal{B}_{\alpha,\zeta,\gamma}u = 0 & \text{on } \partial \Omega, \\
		\int_{\Omega} |u|^2\,dx = \mu,
	\end{array}
	\right.
	\leqno{(P)^\mu_{\alpha,\zeta,\gamma}}
	$$
	where $\Omega \subset \mathbb{R}^N$ ($N \geq 1$) is a smooth bounded domain, $\mu>0$  is prescribed, $\lambda \in \mathbb{R}$ is a part of the unknown which appears  as a Lagrange multiplier, $f,g:\mathbb{R} \to \mathbb{R}$ are continuous functions satisfying some technical conditions. The boundary operator $\mathcal{B}_{\alpha,\zeta,\gamma}$ is defined by
	$$
	\mathcal{B}_{\alpha,\zeta,\gamma}u=\alpha u+\zeta \frac{\partial u}{\partial \eta }-\gamma g(u),
	$$
where $\alpha,\zeta,\gamma \in \{0,1\}$ and $\eta$ denotes the outward unit normal on $\partial\Omega$. Moreover, we highlight several further applications of our approach, including the nonlinear Schr\"{o}dinger equations with critical exponential growth in $\mathbb{R}^{2}$, the nonlinear Schr\"{o}dinger equations with magnetic fields, the biharmonic equations, and the Choquard  equations, among others.
\end{abstract}
\keywords{Bounded domain, Normalized solutions, General Neumann boundary condition, Multiplicity, Perturbation method}
\maketitle


\section{Introduction}
 In this paper, we will investigate the existence and multiplicity of normalized solutions to the following  nonlinear Schr\"odinger equation
	$$
	\left\{
	\begin{array}{ll}
		-\Delta u = \lambda u + f(u)\quad & \text{in } \Omega, \\
		\mathcal{B}_{\alpha,\zeta,\gamma}u = 0 & \text{on } \partial \Omega, \\
		\int_{\Omega} u^2\,dx = \mu,
	\end{array}
	\right.
	\leqno{(P)^\mu_{\alpha,\zeta,\gamma}}
	$$
	where $\Omega \subset \mathbb{R}^N$ ($N \geq 1$) is a smooth bounded domain, $\mu>0$, $\lambda \in \mathbb{R}$ is a part of the unknown which appears  as a Lagrange multiplier, $f,g:\mathbb{R} \to \mathbb{R}$ are continuous functions satisfying some technical conditions. The boundary operator $\mathcal{B}_{\alpha,\zeta,\gamma}$ is defined by
	$$
	\mathcal{B}_{\alpha,\zeta,\gamma}u=\alpha u+\zeta \frac{\partial u}{\partial \eta }-\gamma g(u),
	$$
where $\alpha,\zeta,\gamma \in \{0,1\}$ and $\eta$ denotes the outward unit normal on $\partial\Omega$.

If $\alpha=1$ and $\zeta=\gamma=0$, problem $(P)^\mu_{\alpha,\zeta,\gamma}$ becomes the following one
\begin{equation*}\label{P1}
	\left\{
	\begin{aligned}
		&-\Delta u= \lambda u + f(u) \textrm{ \ in \ } \Omega,\\
		& u \in H_{0}^{1}(\Omega), \\
		&\int_{\Omega}u^{2}dx=\mu,
	\end{aligned}
	\right.
        \leqno{(P)^\mu_{1,0,0}}
\end{equation*}
where $\Omega \subset \R^N ( N \geq 1)$  can be a smooth bounded domain or the whole $\R^N$. Problem $(P)^{\mu}_{1,0,0}$ is strongly related with the following time-dependent, nonlinear Schr\"{o}dinger equation
\[
\left\{
\begin{array}{ll}
i \dfrac{\partial \Psi}{\partial t}(t,x) = \Delta_x \Psi(t,x) + h(|\Psi(t,x)|)\Psi(t,x), & (t,x) \in \mathbb{R}^{+} \times \Omega, \\
\int_{\Omega} |\Psi(t,x)|^2\,dx = \mu &
\end{array}
\right.
\]
where $f(u)=h(|u|)u$, the prescribed mass \(\sqrt{\mu}\)  appears in nonlinear optics and the theory of Bose-Einstein condensates (see \cite{Fibich, TVZ, Zhang}). Solutions \(u\) to $(P)^{\mu}_{1,0,0}$ correspond to standing waves \(\Psi(t,x) = e^{-i\lambda t} u(x)\) of the foregoing time-dependent equation. The prescribed mass represents the power supply in nonlinear optics or the total number of particles in Bose-Einstein condensates.

If $\Omega=\R^N$, there is a vast literature concerning the existence, multiplicity and stability of normalized solutions for problem $(P)^{\mu}_{1,0,0}$. In the seminar paper \cite{CL}, for the case of $L^{2}$-subcritical, Cazenave  and Lions proved the existence of normalized ground states by the minimization method and presented a general method which enables us to prove the orbital stability of some standing waves for problem $(P)^{\mu}_{1,0,0}$. However, the $L^{2}$-supercritical problem is
totally different. In \cite{jeanjean1}, Jeanjean introduced the fibering map
$$
\tilde{I}(u,s) = \frac{e^{2s}}{2}\int_{\R^N}\abs{\nabla u}^2\, dx-\frac{1}{e^{sN}}\int_{\R^N}F(e^\frac{sN}{2}u(x))\, dx,
$$
where $F(t):=\int_0^tf(s)\,ds$. Using the scaling technique and the Poho\v{z}aev identity, the author proved that $I(u):=\int_{\R^N}\abs{\nabla u}^2\, dx-\int_{\R^N}F(u)\, dx$ on $S(\mu):=\{u \in H^1(\R^N): \int_{\R^N}u^2\, dx=\mu\}$ and  $\tilde{I}$ on $S(\mu) \times\R$ satisfy the mountain pass geometry with equal mountain pass levels, then the author constructed a bounded Palais-Smale sequence on $S(\mu)$ and obtained a radial solution to problem $(P)^{\mu}_{1,0,0}$. It is worth noting that the scaling technique plays a crucial role  in the study of normalized solutions to problem $(P)^{\mu}_{1,0,0}$ in $\R^N$. For further results  related  to problem $(P)^{\mu}_{1,0,0}$ in $\R^N$,  see \cite{A21, CCM, AJ21, AlvesThin, BartschJeanjeanSaove, Bartschmolle, BartschSaove, valerio, Bartosz, BellazziniJeanjeanLuo, CazenaveLivro, Jun, jeanjean1, JeanjeanJendrejLeVisciglia, JeanjeanLu, JeanjeanLu2020, JeanjenaLu2, JeanjeanLe, Nicola1, Nicola2} and the references therein.

If $\Omega$ is bounded, unlike the whole space $\mathbb{R}^N$, we are no longer faced with the lack of compactness, and some problems become simpler. For instance, in the $L^2$-subcritical case, the compact embeddings imply the energy functional is weakly lower semi-continuous, and then, since the energy functional is bounded from below and coercive on $S(\mu):=\{u \in H^1_0(\Omega): \int_{\Omega}u^2\, dx=\mu\}$, one can easily prove the existence of normalized ground states by the minimization method. However, in the $L^2$-supercritical case, some new difficulties arise. For example, the scaling technique---commonly used in the analysis on $\mathbb{R}^N$---is no longer applicable, and the Poho\v{z}aev identity for bounded domains has an additional term involving the boundary of the set $\Omega$, which creates further complications when  attempting  to prove the boundedness of minimizing sequences.  To address these challenges, several works have focused on the $L^2$-supercritical case. In \cite{Darion}, when $f(u)=\abs{u}^{p-2}u$ and $2 +\frac{4}{N}\leq p <2^*$, using blow-up analysis and Morse theory, Pierotti and Verzini proved that, for any $k \in \mathbb{N}^+$, problem  $(P)^{\mu}_{1,0,0}$ admits solutions having Morse index bounded above by $k$ only if $\mu$ is sufficiently small and then, they provide existence results for certain ranges of $\mu$.
Their assumptions on $f$ are different from ours. Since we are dealing with general $f \in C(\R,\R)$,  the arguments based on Morse theory and blow-up analysis do not apply in our case.

In \cite{BQZ}, using the monotonicity trick and the Pohoz\v{a}ev identity, Bartsch, Qi and Zou proved the existence of
normalized solutions $(u_{r,\mu},\lambda_{r,\mu})\in H_0^1(r\Omega)\times\R$ to the following problem
\begin{equation}\label{eqbqz}
    \left\{
\begin{array}{ll}
	-\Delta u +V(x)u = \lambda u + \abs{u}^{q-2}u + \beta\abs{u}^{p-2}u\quad & \text{in } r\Omega, \\
	u = 0 & \text{on } \partial r\Omega, \\
	\int_{r\Omega} u^2\,dx = \mu,
\end{array}
\right.
\end{equation}
and showed that $u_{r,\mu} > 0$ in $r\Omega$, $\displaystyle \limsup_{r \to \infty}\max_{x \in r \Omega}u_{r,\mu}<C_\mu$ for some positive constant $C_\mu > 0$ and $\displaystyle \limsup_{r \to \infty}\lambda_{r,\mu}<0$, where $\Omega$ is a bounded smooth star-shaped domain or $\R^N$, $N \geq 3$,  $2<p<2 +\frac{4}{N} <q <2^*$, $\beta \in \R$, and $r>0$ is large. The inequality $x\cdot \bm{n}(x)\geq 0$ on $\partial\Omega$ is crucial in the authors' argument involves Pohoz\v{a}ev identity, where $\bm{n}(x)$ denotes the outward unit normal vector at $x \in \partial\Omega$. However, for a general  smooth bounded domain $\Omega$, this inequality may not hold. Specifically, the authors proved that for any fixed $\mu >0$, there exists $r^*>0$ such that, for all $r>r^*$, problem \eqref{eqbqz} admits a solution. Compared to this existence result for the case $\beta =0$, we will establish an existence result for the general $L^2$-supercritical case  under the condition that the domain has a sufficiently small first eigenvalue. In particular, this implies that for any smooth bounded domain $\tilde{\Omega}$ and any fixed $\mu >0$, there exists $\tilde{r}>0$ such that, for all $r>\tilde{r}$, problem $(P)^{\mu}_{1,0,0}$ with $\Omega =r\tilde{\Omega}$ admits a solution. This conclusion follows from the fact that if $\tilde{\lambda}_1$ denotes the first eigenvalue of $(-\Delta,H_0^1(\tilde{\Omega}))$, then the first eigenvalue of $(-\Delta,H_0^1(r\tilde{\Omega}))$ is given by  $\frac{\tilde{\lambda}_1}{r^2}$. In \cite{Song}, for the Br\'ezis-Nirenberg problem
$$
\left\{
\begin{array}{ll}
	-\Delta u = \lambda u + \abs{u}^{2^*-2}u\quad & \text{in } \Omega, \\
	u = 0 & \text{on } \partial \Omega, \\
	\int_{\Omega}u^2\,dx = \mu,
\end{array}
\right.
$$
where $\Omega \subset \R^N(N \geq 3)$ is a smooth bounded domain, Song and Zou constructed a new type of linking within an open set of a Hilbert-Riemannian manifold below a certain energy, and proved the existence of multiple sign-changing normalized solutions. In addition, the authors also studied the Morse index of the solutions and the bifurcation points of the above problem. In particular, the authors extended their results to the cases $f(x,u)+ \abs{u}^{2^*-2}u$ (for $N\geq 3$) and $f(x,u)$ (for $N\geq 1$) without assuming that $f(x,u)$ is odd with respect to $u$ for all $x \in \bar{\Omega}$. For the latter case, they assumed that $f \in C(\bar{\Omega}\times \R,\R)$, $f(x,0)=0$, $f\not \equiv 0$ and there exists $2<p\leq q<2^*$ such that
$$
 0\leq pF(x,t)\leq f(x,t)t \leq qF(x,t).
$$
In contrast, for general nonlinearities $f\in C(\R,\R)$, we establish the existence of solutions to problem $(P)^{\mu}_{1,0,0}$ under  more general assumptions. Moreover, our approach can be naturally generalized to the problems posed in Banach spaces, such as the existence of normalized solutions to $p$-Laplacian equation on bounded domain. For more interesting  results related to normalized solutions on bounded domains, we refer the reader to \cite{AS2, BQZ, Chang, BenedettaTavaresVerzini, BenedettaTavaresVerzini2, Darion, Song} and the references therein.

In this paper, by adapting the perturbation method developed in \cite{Bu,Es}, we investigate the existence and multiplicity of normalized solutions for  problem $(P)^{\mu}_{1,0,0}$. Compared with the results described earlier,  we consider a general nonlinearity $f \in C(\R,\R)$, without any differentiability assumptions. As a consequence, several tools, for example,  the monotonicity trick, Morse theory and blow-up analysis no longer applicable. 
More importantly, the approach developed in this paper can be applied to a large class of elliptic equations when studying the existence and multiplicity of normalized solutions. To prove the main result, we first establish a non-existence result in Lemma \ref{lemnonex} for the nonlinear term $f$ satisfying ($f_1$)-($f_3$) or satisfying ($f_1$) and ($f_4$). Then,  using the information on the spectrum of $(-\Delta,H_0^1(\Omega))$, we prove existence and multiplicity of normalized solutions for problem $(P)^{\mu}_{1,0,0}$. In particular, in the Appendix,  we provide  a detailed proof of a mountain pass type theorem for $C^1$ functionals defined on open subsets, which was used in \cite{Bu} and will play a crucial role in establishing our existence results.

If $\alpha=0$, $\zeta=1$ and $\gamma=0$, problem $(P)^\mu_{\alpha,\zeta,\gamma}$ becomes the following one
\begin{equation*}\label{P2}
    \left\{
\begin{array}{ll}
	-\Delta u = \lambda u + f(u)\quad & \text{in } \Omega, \\
	\frac{\partial u}{\partial \eta} = 0 & \text{on } \partial \Omega, \\
	\int_{\Omega} u^2\,dx = \mu,
\end{array}
\right.
\leqno{(P)^\mu_{0,1,0}}
\end{equation*}
where $\Omega \subset \R^N ( N \geq 1)$ is a smooth bounded domain. A key motivation for studying problem $(P)^{\mu}_{0,1,0}$ is the ergodic Mean Field Games (MFG) system. MFG were independently introduced in seminal works by Huang, Caines, and Malham\'{e} \cite{Huang} and by Lasry and Lions \cite{Lasry}. The primary objective of MFG is to establish a framework for characterizing Nash equilibria in differential games involving an infinite number of agents that are indistinguishable from one another. For more details, we refer reader to \cite{Chang,Cirant1, Cirant2, Santambrogio} and the references therein. Such equilibria can be characterized by a system satisfying normalization in $L^1(\Omega)$ as follows
\begin{equation}\label{eqmfg}
    \begin{cases}
        -\Delta v + H(\nabla v) + \lambda = h(m(x)) & \text{in } \Omega, \\
        -\Delta m - \text{div}(m\nabla H(\nabla v)) = 0 & \text{in } \Omega, \\
        \dfrac{\partial v}{\partial \eta} = 0,\quad
        \dfrac{\partial m}{\partial \eta} + m\nabla H(\nabla v) \cdot v = 0 & \text{on } \partial \Omega, \\
        \displaystyle\int_{\Omega} m  dx = 1,\quad
        \int_{\Omega} v  dx = 0,
    \end{cases}
\end{equation}
From the point of view of PDE, problem \eqref{eqmfg} is an elliptic system that combines a Kolmogorov equation for $m$ and a Hamilton-Jacobi-Bellman equation for $v$. The Neumann boundary conditions are based on the assumption that agents' trajectories are restricted to $\Omega$ by bouncing off the boundary in a normal direction. For the quadratic Hamilton case $ H(\nabla v) = |\nabla v|^2 $, by using a Hopf-Cole transformation $ \phi = e^{-\nu}/\int e^{-\nu} = \sqrt{m} $, \eqref{eqmfg} is reduced to \begin{equation}\label{eqmfgp2}
    \left\{
\begin{array}{ll}
	-\Delta \phi = \lambda \phi + h(\phi^2)\phi\quad & \text{in } \Omega, \\
	\frac{\partial \phi}{\partial \eta} = 0 & \text{on } \partial \Omega, \\
	\int_{\Omega} \phi^2\,dx = 1.
\end{array}
\right.
\end{equation}
Problem \eqref{eqmfgp2} is equivalent to problem $(P)^{\mu}_{0,1,0}$ by taking a simple transformation $\phi =\frac{1}{\sqrt{c}}u$ and $f(u)=h(\frac{u^2}{c})u$.

In \cite{Pellacci}, Pellacci, Pistoia, Vaira and Verzini investigated normalized solutions of problem $(P)^{\mu}_{1,0,0}$ and $(P)^{\mu}_{0,1,0}$, focusing on the concentration behavior of solutions at specific points of $\Omega$ as the prescribed mass $\mu$ varies. Specifically, for the Neumann problem $(P)^{\mu}_{0,1,0}$ with $f(u) = \vert u\vert^{p-2}u$ and $2+\frac{4}{N}<p<2^*$, they employed the Lyapunov-Schmidt reduction method to prove the existence of positive normalized solutions for $\mu \in (0, \mu_0)$, where $\mu_0 > 0$. The solutions concentrate at a point $\xi_0 \in \overline{\Omega}$ as $\mu \to 0$, where $\xi_0$ is either a non-degenerate critical point of the mean curvature $H$ of the boundary $\partial \Omega$, or the maximum point of the distance function from $\partial \Omega$. In \cite{Chang}, using the monotonicity trick, Morse theory and blow-up analysis, Chang, R\v{a}dulescu and Zhang proved the existence of normalized solutions to problem $(P)^{\mu}_{0,1,0}$  for $\mu \in (0, \mu_0)$ with $\mu_0 > 0$. They assumed that $\Omega \subset \R^N$ ($N \geq 3$) is a smooth bounded domain and that $f\in C^1(\R,\R)$ satisfies
$$
\displaystyle \lim_{|t| \to 0} \frac{f(t)}{t} = 0,
$$
there exist constants $p \in (2,2^*)$ and  $a_0 > 0$ such that
$$
          \lim_{|t| \to \infty} \frac{f(t)}{|t|^{p-2}t} = a_0,
$$
and there exist constants $\mu \geq a_0(p-1)$ and $M > 0$ such that
$$
f'(t)\geq \mu |t|^{p-2}, \quad \forall |t| \geq M.
$$
It is worth noting that their assumptions on $f$ differ substantially from ours. Since we dealing with general nonlinearities $f \in C(\R,\R)$ without assuming differentiability, the techniques employed in  \cite{Chang}, namely, the monotonicity trick, Morse theory, and blow-up analysis, are not applicable in our setting. Moreover,  to the best of our knowledge, there are no existing multiplicity results for problem $(P)^{\mu}_{0,1,0}$, even in the case where  $f(u)=\abs{u}^{p-2}u$ with $2+\frac{N}{4}<p<2^*$.

To present the main results of this paper, we assume throughout this paper that the nonlinearity $f \in  C(\R, \R)$ satisfies the following assumptions:
\begin{enumerate}[align=left,labelsep=0pt]
    \item[($f_1$)] There exist constants $2<p<2^*$ and $0\leq K_2<\lambda_{1,0,0}$ and $K_{p} \geq 0$ such that
    $$
    \abs{f(t)} \leq K_2\abs{t} + K_{p}\abs{t}^{p-1}, \quad \forall t \in \mathbb{R},
    $$
    where $2^* = \frac{2N}{N-2}$ if $N \geq 3$ and $2^* = +\infty$ if $N = 1, 2$.
    \item[($f_2$)]  $f(t)t\geq 0$ and $t\mapsto \frac{f(t)}{\abs{t}}$ is non-decreasing on $(-\infty,0)\cup(0,+\infty)$.

    \item[($f_3$)] $\displaystyle \lim_{\abs{t}\to +\infty}\frac{F(t)}{t^2}=+\infty$, where $F(t):=\int_0^tf(s)\,ds$.
\end{enumerate}
\begin{enumerate}[align=left,labelsep=0pt]
	\item[($g_1$)] There exist constants $2<l<2^{\hexstar}=\frac{2(N-1)}{N-2}$ and  $0\leq K_2^g<\frac{\tilde{\lambda}_{1,1,1}}{4}$ and $K_{l} \geq 0$ such that
	$$
	\abs{g(t)} \leq K_2^g\abs{t} + K_{l}\abs{t}^{l-1}, \quad \forall t \in \mathbb{R},
	$$
	where $\tilde{\lambda}_{1,1,1}$ is defined in \eqref{eqtilde}, $2^{\hexstar} = \frac{2(N-1)}{N-2}$ if $N \geq 3$ and $2^{\hexstar} = +\infty$ if $N =2$.
	
\end{enumerate}
We would like to  emphasize that a broad class of nonlinearities  $f$ satisfying ($f_1$)-($f_3$).  For example,
$$f(t)=\abs{t}^{s-2}t\quad  \text{ with } 2<s<2^*,$$
$$f(t)=\abs{t}^{s_1-2}t + \abs{t}^{s_2-2}t\quad  \text{ with } 2<s_1,s_2<2^*,$$
and
$$f(t)=\abs{t}^{s_1-2}t + \abs{t}^{s_2-2}t\ln(1+\abs{t})\quad  \text{ with } 2<s_1,s_2<2^*.$$
Moreover, by \cite[Remark 1.1]{Zhong}, we can easily construct a function $f$ satisfies ($f_1$)-($f_3$) but the mapping $t\mapsto \frac{f(t)}{\abs{t}}$ is not strictly increasing on $(-\infty,0)\cup(0,+\infty)$. For instance, consider the piecewise-defined function:
$$
f(t)=
\begin{cases}
    \abs{t}^{s_1-2}t &\text{ if } t\in [-1,1],\\
    t &\text{ if } t\in [-2,-1)\cup(1,2],\\
    \abs{t-1}^{s_2-2}t &\text{ if } t\in \R \backslash[-2,2],
\end{cases}
$$
with $2<s_1,s_2<2^*$.

In the study of problem $(P)^{\mu}_{1,0,0}$,  let  $\lambda_{1,0,0}$ denote the first eigenvalue of the operator $(-\Delta, H_{0}^{1}(\Omega))$. Our first result is stated as follows:

\begin{theorem} \label{th2}
 Suppose that ($f_1$)-($f_3$) hold. Then, there exists $\mu_0>0$ depending only on $\Omega$ and $f$ such that, for all $0<\mu<\mu_0$, $(P)^{\mu}_{1,0,0}$ has a solution $(u,\lambda) \in H^1_0(\Omega) \times [0,\lambda_{1,0,0}]$.
\end{theorem}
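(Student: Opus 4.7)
Hypotheses ($f_1$)--($f_3$) allow $f$ to be $L^2$-supercritical, so neither direct minimization on $S(\mu)=\{u\in H^1_0(\Omega):\int_\Omega u^2\,dx=\mu\}$ nor the Jeanjean scaling/Poho\v{z}aev machinery available on $\R^N$ can be used. I would adopt a Lagrangian/continuation approach: for each fixed $\lambda$ with $\lambda+K_2<\lambda_{1,0,0}$, consider the unconstrained functional
\begin{equation*}
I_\lambda(u)=\frac{1}{2}\int_\Omega |\nabla u|^2\,dx-\frac{\lambda}{2}\int_\Omega u^2\,dx-\int_\Omega F(u)\,dx,\qquad u\in H^1_0(\Omega),
\end{equation*}
whose critical points are weak solutions of $-\Delta u=\lambda u+f(u)$ with the Dirichlet condition. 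The task reduces to producing a nontrivial critical point $u_\lambda$ and then selecting $\lambda\in[0,\lambda_{1,0,0}]$ so that $\int_\Omega u_\lambda^2\,dx=\mu$.

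\textbf{Step 1: Mountain pass geometry and a bounded Palais--Smale sequence.} From ($f_1$) together with $\lambda+K_2<\lambda_{1,0,0}$ one gets $I_\lambda(u)\geq c_1\norm{u}_{H^1_0}^2-c_2\norm{u}_{H^1_0}^p$ near $0$, so $I_\lambda$ is positive on a small $H^1_0$-sphere and $u=0$ is a strict local minimum. The superquadraticity in ($f_3$) gives $I_\lambda(t\phi_1)\to-\infty$ as $t\to\infty$ along the first Dirichlet eigenfunction $\phi_1$, yielding the mountain pass geometry. I would apply the open-subset $C^1$ mountain pass theorem from the Appendix on a bounded $U_R=\{u:\norm{\nabla u}_2<R\}$ enclosing an admissible family of paths. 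The monotonicity of $t\mapsto f(t)/\abs{t}$ from ($f_2$) provides a fibering-map structure along rays $\{tu\}_{t\geq 0}$ that yields a bounded (PS) sequence at the mountain pass value $c(\lambda)$; the compact embedding $H^1_0(\Omega)\hookrightarrow L^q(\Omega)$ for $q<2^*$ then upgrades it to strong convergence and produces a nontrivial critical point $u_\lambda$ of $I_\lambda$.

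\textbf{Step 2: Matching the prescribed mass.} Set $\mu(\lambda):=\int_\Omega u_\lambda^2\,dx$. Using paths $t\mapsto t\phi_1$ one has
\begin{equation*}
c(\lambda)\leq \max_{t\geq 0}\left[\frac{t^2}{2}(\lambda_{1,0,0}-\lambda)\int_\Omega \phi_1^2\,dx-\int_\Omega F(t\phi_1)\,dx\right],
\end{equation*}
whose right-hand side vanishes as $\lambda\nearrow\lambda_{1,0,0}$, since the maximizer $t^\ast(\lambda)$ collapses to $0$. Combining this with the Nehari-type identity $\int(|\nabla u_\lambda|^2-\lambda u_\lambda^2)\,dx=\int f(u_\lambda)u_\lambda\,dx$ and the consequence $f(t)t\geq 2F(t)$ of ($f_2$), one deduces $\norm{u_\lambda}_{H^1_0}\to 0$, and therefore $\mu(\lambda)\to 0$, as $\lambda\to\lambda_{1,0,0}^-$. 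The non-existence result of Lemma \ref{lemnonex} prevents the branch from escaping $[0,\lambda_{1,0,0}]$. An intermediate-value / continuity argument on $\lambda\mapsto\mu(\lambda)$ then shows that every sufficiently small positive mass is attained, and taking $\mu_0$ to be the supremum of such values gives the announced $\mu_0$, depending only on $\Omega$ and $f$.

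\textbf{Main obstacle.} The most delicate point is the limit $\lambda\to\lambda_{1,0,0}^-$: verifying rigorously that $\mu(\lambda)\to 0$ and that $\lambda\mapsto\mu(\lambda)$ is regular enough to cover an entire interval of small masses, given that both the quadratic form of $I_\lambda$ and its mountain pass level degenerate simultaneously. This is precisely where the ``perturbation method'' of \cite{Bu,Es} is decisive: $\lambda$ itself plays the role of the perturbation parameter around the eigenvalue $\lambda_{1,0,0}$, and the comparison of $c(\lambda)$ with test paths made of $\phi_1$ is what keeps the bifurcation analysis quantitative. A secondary difficulty is preserving compactness uniformly along the family $\{u_\lambda\}$, which I would handle through the subcritical Sobolev embedding on the bounded domain and the monotonicity in ($f_2$).
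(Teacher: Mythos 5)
The fatal gap is in your Step 2. The whole plan hinges on an ``intermediate-value / continuity argument'' for $\lambda \mapsto \mu(\lambda)=\int_\Omega u_\lambda^2\,dx$, but this map is not even well defined, let alone continuous: for a general $f\in C(\R,\R)$ satisfying only ($f_1$)--($f_3$), the mountain pass critical point of $I_\lambda$ is not unique, and no selection $\lambda\mapsto u_\lambda$ can be guaranteed to depend continuously on $\lambda$. What one can actually extract from the fixed-multiplier family $\{I_\lambda\}$ is much weaker: the value function $c(\lambda)$ is monotone, hence differentiable only a.e., and (via the monotonicity trick) one obtains solutions whose masses lie in the a.e.-defined set of values $-2c'(\lambda)$; this set need not contain any interval $(0,\mu_0)$, i.e., the range of $\mu(\cdot)$ can have gaps, and the intermediate value theorem has nothing to act on. This is precisely the known obstruction that makes prescribed-mass problems genuinely different from fixed-multiplier problems, and it is the reason the paper does not take this route. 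A secondary unproved claim is that $c(\lambda)\to 0$ forces $\norm{u_\lambda}\to 0$: from $I_\lambda'(u_\lambda)=0$ one only gets $c(\lambda)=\int_\Omega\bigl[\tfrac12 f(u_\lambda)u_\lambda-F(u_\lambda)\bigr]dx$, and ($f_2$) yields mere nonnegativity of the integrand, with no quantitative coercivity converting smallness of this integral into smallness of the norm.

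For comparison, the paper keeps $\mu$ fixed and perturbs the functional rather than the multiplier: it penalizes $E$ on the open ball $U_\mu=\{u\in H^1_0(\Omega):\norm{u}_2^2<\mu\}$ by $H_{r,\mu}(u)=f_r(\norm{u}_2^2/\mu)$ with $f_r(s)=s^r/(1-s)$, runs the open-set mountain pass theorem (Theorem \ref{thmpopen}, Lemma \ref{lemmpg}) to get Cerami sequences of $E_{r,\mu}$, proves their compactness (Lemmas \ref{lembounded}--\ref{lemunr}), and then passes to the limit $r\to+\infty$. Lemma \ref{lemur} yields a dichotomy: either a solution on $S_{1,0,0}(\mu)$ with multiplier $\lambda\in[0,2c_\infty/\mu]\subset[0,\lambda_{1,0,0}]$ (by the level estimate $c_\infty\leq\mu\lambda_{1,0,0}/2$ of Lemma \ref{lemcin}), or a critical point of the free functional $E$ with mass $\nu<\mu$ and multiplier $0$; the non-existence Lemma \ref{lemnex}(i) excludes the second alternative once $\mu$ is small. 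Note this is also the correct role of the non-existence lemma --- it kills the under-saturated-mass, zero-multiplier alternative, not, as in your sketch, an ``escape of the branch'' from $[0,\lambda_{1,0,0}]$. Your Step 1 (mountain pass geometry, boundedness of Cerami sequences via the monotonicity in ($f_2$), compactness from the compact embedding) mirrors the paper's Lemmas \ref{lemmpg} and \ref{lembounded} and is fine in spirit; the proposal breaks down only, but irreparably, at the continuation step.
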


Our next result is a version of the previous theorem under the alternative assumption that the nonlinearity  $f$ satisfies the well-known Ambrosetti-Rabinowitz condition, namely
\begin{enumerate}[align=left,labelsep=0pt]
    \item[($f_4$)] There exists $q>2$ such that $0 \leq qF(t)\leq f(t)t$ for all $t \in \mathbb{R}$.
\end{enumerate}
\begin{theorem}\label{th3}
    Suppose that ($f_1$) and ($f_4$) hold. If $$0<\mu \leq
  \begin{cases}
    \left(\frac{\lambda_{1,0,0}-K_2}{K_pC_{p,N}}\right)^{\frac{2}{p-2}}\lambda_{1,0,0}^{-\frac{N}{2}}  &\text{ if } 2<p\leq 2+\frac{4}{N},\\
    \left(\frac{\lambda_{1,0,0}-K_2}{K_pC_{p,N}}\right)^{\frac{2}{p-2}}\left(\frac{q}{q-2}\right)^{\frac{2}{p-2}-\frac{N}{2}}\lambda_{1,0,0}^{-\frac{N}{2}} &\text{ if }  2+\frac{4}{N}<p<2^*,
\end{cases}$$
holds, where $C_{p,N}$ is defined in \eqref{eqgn}, then $(P)^{\mu}_{1,0,0}$ has a solution $(u,\lambda) \in H^1_0(\Omega) \times [0,\lambda_{1,0,0}]$.
\end{theorem}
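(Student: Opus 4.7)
The plan is to adapt the perturbation method of \cite{Bu,Es}: treat $\lambda$ as a free parameter and, for each admissible $\lambda$, produce a mountain-pass critical point $u_\lambda$ of the unconstrained functional
$$J_\lambda(u) = \frac{1}{2}\int_\Omega \abs{\nabla u}^2\,dx - \frac{\lambda}{2}\int_\Omega u^2\,dx - \int_\Omega F(u)\,dx, \quad u \in H^1_0(\Omega).$$
The pair $(u_{\lambda^*},\lambda^*)$ then solves $(P)^\mu_{1,0,0}$ exactly when $\norm{u_{\lambda^*}}_2^2 = \mu$, and the explicit bound on $\mu$ is tailored so that this equation is solvable for some $\lambda^* \in [0,\lambda_{1,0,0}]$.

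First I would verify the mountain-pass geometry on the admissible interval $\lambda \in [0,\lambda_{1,0,0}-K_2)$. Assumption ($f_1$) together with the Poincar\'e inequality gives
$$J_\lambda(u) \geq \tfrac{1}{2}\bigl(1 - \tfrac{\lambda + K_2}{\lambda_{1,0,0}}\bigr)\norm{\nabla u}_2^2 - \tfrac{K_p}{p}\norm{u}_p^p,$$
which is strictly positive on a small sphere around $0$. Assumption ($f_4$) yields $F(t) \geq C\abs{t}^q - C'$ with $q > 2$, so $J_\lambda(t\phi_1) \to -\infty$ as $t \to +\infty$, where $\phi_1$ denotes the first Dirichlet eigenfunction of $-\Delta$. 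The AR condition also bounds any Palais-Smale sequence at level $c_\lambda$ via the standard identity
$$J_\lambda(u_n) - \tfrac{1}{q}\langle J'_\lambda(u_n),u_n \rangle \geq \bigl(\tfrac{1}{2}-\tfrac{1}{q}\bigr)\bigl(\norm{\nabla u_n}_2^2 - \lambda\norm{u_n}_2^2\bigr),$$
and the compact embedding $H^1_0(\Omega) \hookrightarrow L^s(\Omega)$ for $s \in [1,2^*)$ produces a nontrivial critical point $u_\lambda$ at the MP level $c_\lambda$ (invoking the MP theorem for $C^1$ functionals on open subsets recorded in the Appendix).

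The heart of the argument is a quantitative mass estimate. From $\langle J'_\lambda(u_\lambda), u_\lambda\rangle = 0$ and ($f_4$) one obtains the Nehari-type bound
$$\left(1 - \tfrac{2}{q}\right)\bigl(\norm{\nabla u_\lambda}_2^2 - \lambda\norm{u_\lambda}_2^2\bigr) \leq 2 c_\lambda.$$
Bounding $c_\lambda$ from above via a convenient test path, controlling $\int F(u_\lambda)$ by ($f_1$), and using the Gagliardo-Nirenberg inequality
$$\norm{u}_p^p \leq C_{p,N}\norm{\nabla u}_2^{N(p-2)/2}\norm{u}_2^{p - N(p-2)/2},$$
together with the explicit hypothesis on $\mu$ produces the two-case bound claimed in the theorem. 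The dichotomy $p \lessgtr 2 + 4/N$ reflects whether the exponent $N(p-2)/2$ on $\norm{\nabla u}_2$ is at most or exceeds $2$; in the $L^2$-supercritical case the extra factor $(q/(q-2))^{\frac{2}{p-2}-\frac{N}{2}}$ arises exactly from absorbing $c_\lambda$ into the Nehari inequality via the $(q/(q-2))$ weight.

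Finally, invoking Lemma \ref{lemnonex} to exclude solutions with $\lambda$ at or above $\lambda_{1,0,0}$ pins down the endpoint behavior $\norm{u_\lambda}_2^2 \to 0$ as $\lambda$ approaches the top of its admissible range, while positivity of the MP threshold gives a positive lower bound as $\lambda \to 0^+$; the intermediate value theorem then furnishes $\lambda^* \in [0,\lambda_{1,0,0}]$ with $\norm{u_{\lambda^*}}_2^2 = \mu$. The main obstacle I foresee is the continuous dependence (or a suitable single-valued selection) of $\lambda \mapsto u_\lambda$, since mountain-pass critical points are generically not unique: this is typically handled either by selecting $u_\lambda$ of minimal mass at level $c_\lambda$, or by exploiting the continuity of $\lambda \mapsto c_\lambda$ as a minimax value together with a topological argument in the spirit of \cite{Bu,Es}.
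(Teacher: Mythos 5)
Your proposal takes a genuinely different route from the paper, but it contains a gap that is not a technicality: it is precisely the obstruction that the paper's method is designed to avoid. The paper never fixes $\lambda$ and solves an unconstrained problem; instead it penalizes the constraint, working with $E_{r,\mu}(u)=\frac12\|u\|^2-\Psi(u)-f_r\bigl(\|u\|_2^2/\mu\bigr)$ on the open set $U_\mu=\{\|u\|_2^2<\mu\}$, extracts a critical point $u_r$ for each large $r$, passes to the limit $r\to+\infty$, and obtains the dichotomy of Lemma \ref{lemur}: either $u$ lies on $S_{1,0,0}(\mu)$ with multiplier $\lambda\in[0,2c_\infty/\mu]$, or $u$ lies on $S_{1,0,0}(\nu)$ with $\nu<\mu$ and multiplier $\lambda=0$. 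The explicit threshold on $\mu$ in Theorem \ref{th3} comes from the non-existence Lemma \ref{lemnex}(ii), which (via ($f_4$) and Gagliardo--Nirenberg) rules out the second alternative, and the bound $c_\infty\le\mu\lambda_{1,0,0}/2$ of Lemma \ref{lemcin} places the multiplier in $[0,\lambda_{1,0,0}]$. The Lagrange multiplier is produced by the penalization itself; no continuation in $\lambda$ ever occurs.

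Your scheme instead requires solving the mass equation $\|u_{\lambda^*}\|_2^2=\mu$ by the intermediate value theorem, and this is where the argument breaks. Mountain-pass critical points of $J_\lambda$ are not unique, there is no continuous (or even measurable, canonical) selection $\lambda\mapsto u_\lambda$, and the set of masses of critical points at level $c_\lambda$ need not vary continuously or connectedly in $\lambda$; so the IVT simply does not apply. You acknowledge this, but the remedies you gesture at are not available: choosing ``minimal mass at level $c_\lambda$'' gives no continuity, and the works \cite{Bu,Es} you invoke do not perform a continuation in $\lambda$ at all --- they use exactly the penalization device above, so they cannot be cited to patch this step. Your endpoint analysis is also unjustified on both ends: Lemma \ref{lemnex} rules out solutions with \emph{zero Lagrange multiplier} and small mass (i.e.\ free critical points of $E$ with $E'(u)=0$), not solutions with $\lambda$ near $\lambda_{1,0,0}$, so it cannot force $\|u_\lambda\|_2^2\to 0$ at the top of the range; and ``positivity of the MP threshold'' as $\lambda\to 0^+$ gives no lower bound on the mass, let alone the bound $\|u_\lambda\|_2^2\ge\mu$ that an IVT crossing would require. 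Without a monotonicity/continuation structure (which your hypotheses, $f$ merely continuous, do not provide), the fixed-$\lambda$ approach cannot be completed; the penalized-functional argument of Sections \ref{subsecpf}--\ref{secp23} is the substantive content of the proof, not an implementation detail.
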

From ($f_4$),  we know that for any $R>0$ such that $F(R)>0$, one gets
$$
F(t)\leq \frac{F(R)}{R^q} \abs{t}^q, \quad \forall t\in [-R,R],\quad \text{and}\quad F(t)\geq \frac{F(R)}{R^q} \abs{t}^q, \quad \forall t\in (-\infty,-R)\cup(R,+\infty).
$$
Thus, if we further assume that ($f_1$) holds and $ f\not \equiv 0$, then $q\leq p$. Then, as an immediate corollary of Theorem \ref{th3}, we have
\begin{corollary}\label{co1}
For any fixed $\mu>0$, the following two existence results depending on $\Omega$ hold:
  \begin{enumerate}[label=\rm(\roman*)]
    \item Suppose that ($f_1$) and ($f_4$) hold, and suppose that $K_2=0$ and $2+\frac{4}{N}<p<2^*$. If $\Omega$ satisfies $0<\lambda_{1,0,0}\leq \frac{q-2}{q}\mu^{\frac{2(p-2)}{4+2N-Np}}(K_pC_{p,N})^
    \frac{4}{4+2N-Np}$, then $(P)^{\mu}_{1,0,0}$ has a solution $(u,\lambda) \in H^1_0(\Omega) \times [0,\lambda_{1,0,0}]$.
    \item Suppose that ($f_4$) hold, and suppose $|f(t)|\leq a\left(\abs{t}^{p'-1} +\abs{t}^{p-1}\right)$ with $2+\frac{4}{N}<p'<p<2^*$ for some positive constant $a>0$. There exists $\lambda^*$ depending on $a$, $p$, $p'$, $q$, $N$ and $\mu$ such that, if $\Omega$ satisfies $0<\lambda_{1,0,0}<\lambda^*$, then $(P)^{\mu}_{1,0,0}$ has a solution $(u,\lambda) \in H^1_0(\Omega) \times [0,\lambda_{1,0,0}]$.
\end{enumerate}
\end{corollary}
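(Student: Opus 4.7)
My plan is to derive Corollary \ref{co1} directly from Theorem \ref{th3}: part (i) by a purely algebraic rearrangement of the hypothesis, and part (ii) by a Young-type interpolation of the nonlinearity followed by the same algebra.

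For (i), I set $E := \tfrac{2}{p-2} - \tfrac{N}{2} = \tfrac{4+2N-Np}{2(p-2)}$, which is strictly negative by the assumption $p > 2+\tfrac{4}{N}$. With $K_2 = 0$, the hypothesis of Theorem \ref{th3} reads
$$\mu \leq (K_p C_{p,N})^{-2/(p-2)} \bigl(\tfrac{q}{q-2}\bigr)^{E} \lambda_{1,0,0}^{E}.$$
Since $E<0$, raising both sides to the power $1/E$ reverses the inequality; collecting exponents via $1/E = 2(p-2)/(4+2N-Np)$ and $-2/((p-2)E) = 4/(4+2N-Np)$ yields precisely the bound on $\lambda_{1,0,0}$ claimed in (i). So (i) amounts to bookkeeping, and the only thing to check is the sign convention on $E$.

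For (ii), the obstacle is that $\abs{f(t)} \leq a(\abs{t}^{p'-1}+\abs{t}^{p-1})$ does not directly match $(f_1)$: there is no linear term on the right. I would produce one via weighted Young's inequality with conjugate exponents $r = (p-2)/(p-p')$ and $s = (p-2)/(p'-2)$ (both greater than $1$ since $2<p'<p$), which gives, for every $\epsilon>0$,
$$a\abs{t}^{p'-1} \leq \epsilon \abs{t} + C_0\, \epsilon^{-(p-p')/(p'-2)} \abs{t}^{p-1},$$
for some explicit constant $C_0 = C_0(a,p,p')$. Hence $(f_1)$ holds with $K_2 = \epsilon$ and $K_p = a + C_0 \epsilon^{-(p-p')/(p'-2)}$. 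Choosing $\epsilon = \lambda_{1,0,0}/2$ ensures $K_2 < \lambda_{1,0,0}$ and $\lambda_{1,0,0} - K_2 = \lambda_{1,0,0}/2$, while $K_p$ grows like $C_2\, \lambda_{1,0,0}^{-(p-p')/(p'-2)}$ as $\lambda_{1,0,0} \to 0^+$.

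Substituting these choices into the supercritical branch of Theorem \ref{th3} and simplifying, the right-hand side of the required inequality scales, up to a multiplicative constant depending on $a, p, p', q, N$, like $\lambda_{1,0,0}^{2/(p'-2) - N/2}$. The hypothesis $p' > 2+\tfrac{4}{N}$ makes this exponent strictly negative, so the right-hand side tends to $+\infty$ as $\lambda_{1,0,0} \to 0^+$. Thus for any prescribed $\mu > 0$ one obtains an explicit threshold $\lambda^* = \lambda^*(a,p,p',q,N,\mu) > 0$ such that, whenever $0 < \lambda_{1,0,0} < \lambda^*$, the hypothesis of Theorem \ref{th3} is satisfied and a solution exists. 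The main — essentially the only — technical point is to verify that the unfavorable blow-up of $K_p$ as $\lambda_{1,0,0} \to 0$ is overcompensated by the supercritical scaling $p' > 2+4/N$; this is the arithmetic I would scrutinize most carefully, as a sign error on $E$ or on the Young exponent ratio $s/r = (p-p')/(p'-2)$ would destroy the conclusion.
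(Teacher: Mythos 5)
Your proposal is correct and takes essentially the same route as the paper: part (i) is exactly the paper's observation that the stated bound on $\lambda_{1,0,0}$ is the hypothesis of Theorem \ref{th3} (with $K_2=0$) solved for $\lambda_{1,0,0}$, and part (ii) reproduces the paper's argument of absorbing $a\abs{t}^{p'-1}$ into $\frac{\lambda_{1,0,0}}{2}\abs{t}+K_p\abs{t}^{p-1}$ (the paper proves this interpolation by a two-case split in $\abs{t}$ rather than weighted Young's inequality, obtaining the same blow-up $K_p\sim C\,\lambda_{1,0,0}^{-(p-p')/(p'-2)}$) and then checking that the threshold $\mu^*$ of Theorem \ref{th3} scales like $\lambda_{1,0,0}^{2/(p'-2)-N/2}\to+\infty$ as $\lambda_{1,0,0}\to 0^+$, the exponent being negative precisely because $p'>2+\frac{4}{N}$. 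One trivial bookkeeping slip: in (i) the exponent landing on $K_pC_{p,N}$ is $+2/((p-2)E)=4/(4+2N-Np)$, not $-2/((p-2)E)$, although the final bound you state is the correct one.
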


By using the Fountain theorem, we can prove the following multiplicity results.
\begin{theorem}\label{th4}
 Suppose that ($f_1$)-($f_3$) hold or ($f_1$) and ($f_4$) hold, and suppose that $f(t)=-f(-t)$. For any $m \in \mathbb{N}^+$, there exists $\mu_{m}^* > 0$ depending on $\Omega$ and $f$, such that, for any $0 < \mu < \mu^*_{m}$,  $(P)^{\mu}_{1,0,0}$  has at least $m$ nontrivial solutions $(u_1,\bar{\lambda}_1), (u_2,\bar{\lambda}_2),\cdots,(u_m,\bar{\lambda}_m) \in H^{1}_{0}(\Omega)\times[0,+\infty)$.
\end{theorem}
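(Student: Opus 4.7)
The plan is to run the $\mathbb{Z}_2$-symmetric variant of the perturbation scheme used for Theorems \ref{th2} and \ref{th3}, substituting the mountain pass step with the Fountain theorem. Since $f$ is odd, the Lagrangian
$$
I_\lambda(u) = \frac{1}{2}\int_\Omega |\nabla u|^2\,dx - \frac{\lambda}{2}\int_\Omega u^2\,dx - \int_\Omega F(u)\,dx
$$
is even on $H^1_0(\Omega)$, and its critical points are exactly the weak solutions of $-\Delta u = \lambda u + f(u)$ with homogeneous Dirichlet data. The task is then to select $\bar\lambda_k \geq 0$ so that the $k$-th critical point of $I_{\bar\lambda_k}$ carries the prescribed mass $\mu$.

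First, I would fix an $L^2$-orthonormal basis $(e_j)_{j\geq 1}$ of Dirichlet eigenfunctions of $-\Delta$ on $\Omega$ with eigenvalues $\lambda_1 \leq \lambda_2 \leq \cdots$, and set $Y_k = \mathrm{span}\{e_1,\ldots,e_k\}$ and $Z_k$ = closure of $\mathrm{span}\{e_j : j > k\}$ in $H^1_0(\Omega)$. On $Z_k$ the Poincaré-type bound $\|u\|_2^2 \leq \lambda_{k+1}^{-1}\|\nabla u\|_2^2$ combined with the growth condition ($f_1$) and the Sobolev embedding give
$$
I_\lambda(u) \;\geq\; \tfrac{1}{2}\left(1 - \tfrac{\lambda + K_2}{\lambda_{k+1}}\right)\|\nabla u\|_2^2 \;-\; C\|\nabla u\|_2^p,
$$
so for $k$ large enough I can choose radii $r_k \to \infty$ with $\inf\{I_\lambda(u) : u \in Z_k,\,\|\nabla u\|_2 = r_k\} \to +\infty$, uniformly for $\lambda$ in any bounded subset of $[0,+\infty)$. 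On the finite-dimensional $Y_k$ the superlinearity from ($f_3$), respectively from ($f_4$), furnishes $\rho_k > r_k$ with $\sup_{Y_k \cap \partial B_{\rho_k}} I_\lambda \leq 0$. The Palais--Smale, or Cerami, condition for $I_\lambda$ is the one already established in the proofs of Theorems \ref{th2} and \ref{th3}. The Fountain theorem then produces, for each admissible $\lambda$, an unbounded sequence of critical values $c_k(\lambda) \to +\infty$ with associated critical points $u_k(\lambda)$ solving the PDE.

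The mass constraint is then enforced exactly as in the single-solution case. For each fixed $k \in \{1,\ldots,m\}$ I would analyse the map $\lambda \mapsto M_k(\lambda) := \|u_k(\lambda)\|_2^2$: the min-max characterisation of $c_k(\lambda)$ combined with test functions drawn from $Y_k$ produces upper bounds which force $M_k$ to cover an interval $(0,\mu_k]$ with $\mu_k > 0$, and a continuity/intermediate-value argument, identical in spirit to the one in Theorems \ref{th2} and \ref{th3}, selects $\bar\lambda_k \geq 0$ with $M_k(\bar\lambda_k) = \mu$. Setting $\mu^*_m = \min_{k \leq m} \mu_k$ yields simultaneous solvability at all $m$ levels. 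The main obstacle will be proving that the $m$ solutions produced this way are \emph{pairwise distinct}: the Fountain theorem only guarantees $c_k(\lambda) \to \infty$, not strict ordering of consecutive levels, and even strict ordering of energies at a common $\lambda$ need not survive after the individual mass adjustments $\lambda \mapsto \bar\lambda_k$. I would control this by extracting quantitative level-separations from the eigenvalue gaps $\lambda_{k+1} - \lambda_k$ (which enter the coercivity estimate on $Z_k$) and then shrinking $\mu^*_m$ further if necessary so that these gaps persist under the mass-matching procedure, ensuring geometric distinctness of $u_1,\ldots,u_m$.
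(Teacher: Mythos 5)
Your proposal takes a genuinely different route from the paper, and the route has a gap at exactly the step you treat most casually: the mass-matching. You fix $\lambda\ge 0$, run the Fountain theorem on the free functional $I_\lambda$, and then assert that ``a continuity/intermediate-value argument, identical in spirit to the one in Theorems \ref{th2} and \ref{th3}, selects $\bar\lambda_k$ with $M_k(\bar\lambda_k)=\mu$.'' No such argument appears in the paper: Theorems \ref{th2} and \ref{th3} never treat $\lambda$ as a tunable parameter. The paper penalizes the mass from inside $U_\mu=\{u:\|u\|_2^2<\mu\}$ by $f_r\left(\|u\|_2^2/\mu\right)$, so the multiplier $\lambda_r=\tfrac{2}{\mu}f_r'\left(\|u_r\|_2^2/\mu\right)\ge 0$ is generated by the penalization itself; sending $r\to+\infty$ (Lemmas \ref{lembounded}--\ref{lemur}) leaves only the dichotomy ``mass $=\mu$ with $\lambda\ge 0$'' or ``mass $<\mu$ with $\lambda=0$'', and the second horn is excluded by the non-existence Lemma \ref{lemnex} once $\mu$ is small. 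Your step instead presupposes that $\lambda\mapsto M_k(\lambda)=\|u_k(\lambda)\|_2^2$ is a well-defined continuous function. It is not: Fountain critical points at level $c_k(\lambda)$ are neither unique nor canonically selectable, and the only object with usable regularity is the min-max value $c_k(\lambda)$, which is merely non-increasing in $\lambda$, hence differentiable only almost everywhere; the mass is (formally) $-2\partial_\lambda c_k(\lambda)$ and can jump, skipping the prescribed value $\mu$ entirely. This is precisely the known obstruction that makes prescribed-mass problems on bounded domains hard and that the penalization method is designed to circumvent; with $f$ merely continuous (no non-degeneracy, no uniqueness), there is no way to repair the intermediate-value step as stated.

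Two further problems. First, your coercivity estimate on $Z_k$ with a $k$-independent constant $C$ does not yield $\inf\{I_\lambda(u):u\in Z_k,\ \|\nabla u\|_2=r_k\}\to+\infty$: for $p>2$ the supremum over radii of $\tfrac{1}{2}\left(1-\tfrac{\lambda+K_2}{\lambda_{k+1}}\right)t^2-Ct^p$ stays bounded as $k\to\infty$; the standard Fountain argument requires the degenerating embedding constants $\beta_k=\sup\{\|u\|_p:u\in Z_k,\ \|\nabla u\|_2=1\}\to 0$, which your displayed inequality discards. Likewise, the compactness you invoke was established in the paper for the penalized functional $E_{r,\mu}$, not for $I_\lambda$; it is adaptable, but not ``already established.'' Second, distinctness: the paper obtains it structurally, not by perturbing eigenvalue gaps. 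It chooses a sparse subsequence $\lambda_{k_2}<\dots<\lambda_{k_m}$ and $\mu$ small so that the limiting levels interlace strictly, $c_{\infty,k_{i-1}}<b_{r,k_i}\le c_{\infty,k_i}\le\tfrac{\mu\lambda_{k_i}}{2}$ (Lemma \ref{lemfountain} and the proof of Theorem \ref{th4}); since all $m$ limit points are critical points of the \emph{same} functional $E$ on the \emph{same} constraint $S_{1,0,0}(\mu)$ at pairwise distinct energies, they are pairwise distinct. In your scheme the $m$ functions solve equations with $m$ different multipliers $\bar\lambda_k$, their energies are not comparable after the (already unjustified) mass adjustment, and nothing prevents two of the selected pairs $(u_k,\bar\lambda_k)$ from coinciding; so even granting every earlier step, the conclusion of Theorem \ref{th4} would not follow.
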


Moreover, when ($f_2$) holds, we prove that the solutions $u$ obtained in Theorem \ref{th2} and \ref{th3} are minimizers of $E$ constrained on $\mathcal{N}^+(\mu)$, defined in \eqref{eqdefn+}. Furthermore, when $\Omega$ is star-shaped, $N \geq 3$ and $f$ satisfies ($f_1$), ($f_2$) and ($f_4$), we apply the perturbation method to establish the existence of a ground state $(u,\lambda)$ to problem $(P)^{\mu}_{1,0,0}$, see Definition \ref{defgs}, and we also  show that $u$ is a local minimizers of energy functional $E$ constrained on $S_{1,0,0}(\mu):=\{u \in H^1_0(\Omega): \int_{\Omega}u^2\, dx=\mu\}$. We note that the existence results of ground state for problem $(P)^{\mu}_{1,0,0}$ is not new, see \cite{Ji}. However, the author in \cite{Ji} used the minimization method to obtain the ground states and the author's assumptions on $f$ are different from ours. Finally, similar to Corollary \ref{co1}, we also establish in Corollary \ref{co2} the existence of ground state solutions for problem $(P)^{\mu}_{1,0,0}$ under the condition that the domain the domain has a sufficiently small first eigenvalue.

For the case where $\alpha=\zeta=\gamma=1$, we are working with the following problem
$$
\left\{
\begin{array}{ll}
	-\Delta u = \lambda u + f(u)\quad & \text{in } \Omega, \\
	\frac{\partial u}{\partial \eta } = -u+g(u) & \text{on } \partial \Omega, \\
	\int_{\Omega} u^2\,dx = \mu,
\end{array}
\right.
\leqno{(P)^{\mu}_{1,1,1}}
$$
where $\Omega \subset \R^N ( N \geq 2)$ is a smooth bounded domain. In the study of the above problem, we define  
\begin{equation}\label{eqhat}
    \hat{\lambda}_{1,1,1}:=\inf\left\{\int_{\Omega}|\nabla u|^2\,dx+\int_{\partial \Omega}|u|^2\,d\sigma\,:\,\int_{\Omega}|u|^2\,dx=1\right\}
\end{equation}
and
\begin{equation}\label{eqtilde}
    \tilde{\lambda}_{1,1,1}:=\inf\left\{\int_{\Omega}|\nabla u|^2\,dx+\int_{\partial \Omega}|u|^2\,d\sigma\,:\,\int_{\partial \Omega}|u|^2\,d\sigma=1\right\}\blue{=1.}
\end{equation}
 To study $(P)^{\mu}_{1,1,1}$, we need the following assumptions on $f$ and $g$:
\begin{enumerate}[align=left,labelsep=0pt]
    \item[($f_1'$)] There exist constants $2<p<2^*$ and $0\leq K_2<\frac{\hat{\lambda}_{1,1,1}}{4}$ and $K_{p} \geq 0$ such that
    $$
    \abs{f(x,t)} \leq K_2\abs{t} + K_{p}\abs{t}^{p-1}, \quad \forall t \in \mathbb{R},
    $$
    where $2^* = \frac{2N}{N-2}$ if $N \geq 3$ and $2^* = +\infty$ if $N = 1, 2$.
    \item[($g_2$)] There exists $q>2$ such that $0 \leq qG(t)\leq g(t)t$, where $G(t):=\int_0^tg(s)\,ds$.
\end{enumerate}
\begin{theorem} \label{th2.3}
Suppose that ($f_1'$), ($f_4$), ($g_1$) and ($g_2$) hold with $N \geq 2$. Then, there exists $\mu^{**}>0$ depending on $\Omega$, $f$ and $g$ such that, for all $0<\mu<\mu^{**}$, $(P)^{\mu}_{1,1,1}$ admits a solution $(u,\lambda) \in H^1(\Omega) \times  [0, \hat{\lambda}_{1,1,1}]$.
\end{theorem}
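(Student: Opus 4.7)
The proof adapts the perturbation strategy of Theorems \ref{th2}-\ref{th3} to the Robin-type boundary problem $(P)^{\mu}_{1,1,1}$. Treat the Lagrange multiplier $\lambda\in[0,\hat{\lambda}_{1,1,1}]$ as a free parameter and consider the unconstrained functional
\[
E_\lambda(u):=\tfrac{1}{2}\int_\Omega|\nabla u|^2\,dx+\tfrac{1}{2}\int_{\partial\Omega}u^2\,d\sigma-\tfrac{\lambda}{2}\int_\Omega u^2\,dx-\int_\Omega F(u)\,dx-\int_{\partial\Omega}G(u)\,d\sigma
\]
on $H^1(\Omega)$, where $F(t)=\int_0^t f(s)\,ds$ and $G(t)=\int_0^t g(s)\,ds$. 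Any nontrivial critical point $u_\lambda$ of $E_\lambda$ automatically satisfies the first two lines of $(P)^{\mu}_{1,1,1}$, so the task reduces to producing, for each $\mu\in(0,\mu^{**})$, a value of $\lambda$ in this interval with $\|u_\lambda\|_{L^2(\Omega)}^2=\mu$.

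First, using $(f_1')$, $(g_1)$, the trace inequality and the variational characterizations \eqref{eqhat}-\eqref{eqtilde}, I would verify a uniform mountain pass geometry for $E_\lambda$: the quarter-factor bounds $K_2<\hat{\lambda}_{1,1,1}/4$ and $K_2^g<\tilde{\lambda}_{1,1,1}/4$ are precisely what is needed to absorb both the lower-order nonlinear terms and the $-\tfrac{\lambda}{2}\|u\|_{L^2}^2$ term, yielding $E_\lambda(u)\geq c_0\|u\|_{H^1}^2-C_1\|u\|_{H^1}^p-C_2\|u\|_{H^1}^l$ for $u$ in a small ball, with $c_0>0$ uniform in $\lambda$ in any closed subinterval of $[0,\hat{\lambda}_{1,1,1})$. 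The Ambrosetti-Rabinowitz conditions $(f_4)$ and $(g_2)$ force $F(t),G(t)\gtrsim|t|^q$ at infinity, so $E_\lambda(t\varphi)\to-\infty$ as $t\to\infty$ for a fixed $\varphi\in H^1(\Omega)\setminus\{0\}$. Applying the mountain pass theorem for $C^1$ functionals on open subsets (proved in the appendix) yields a Palais-Smale sequence at the mountain pass level $c_\lambda>0$; the AR-type argument applied jointly to $f$ and $g$ provides $H^1$-boundedness, and the compact embeddings $H^1(\Omega)\hookrightarrow L^s(\Omega)$ for $s<2^*$ and $H^1(\Omega)\hookrightarrow L^l(\partial\Omega)$ for $l<2^{\hexstar}$ (both valid for $N\geq 2$) upgrade to strong convergence, producing a nontrivial critical point $u_\lambda$ of $E_\lambda$ with $E_\lambda(u_\lambda)=c_\lambda$.

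The main obstacle is the mass-matching step. Combining the identity
\[
c_\lambda=E_\lambda(u_\lambda)-\tfrac{1}{q}E_\lambda'(u_\lambda)u_\lambda\geq\Bigl(\tfrac{1}{2}-\tfrac{1}{q}\Bigr)\Bigl(\|u_\lambda\|_{H^1}^2+\|u_\lambda\|_{L^2(\partial\Omega)}^2-\lambda\|u_\lambda\|_{L^2(\Omega)}^2\Bigr)
\]
(a direct consequence of $(f_4)$ and $(g_2)$) with an explicit upper bound on $c_\lambda$ obtained by evaluating the mountain pass path along a scaled first eigenfunction associated with $\hat{\lambda}_{1,1,1}$, one derives an estimate of the form $\|u_\lambda\|_{L^2(\Omega)}^2\leq \Phi(\hat{\lambda}_{1,1,1}-\lambda)$ with $\Phi$ continuous and $\Phi(0)=0$. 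Thus the mass can be made arbitrarily small by choosing $\lambda$ sufficiently close to $\hat{\lambda}_{1,1,1}$, while at $\lambda=0$ it is bounded below by a positive constant. A continuity/connectedness argument applied to the (possibly multi-valued) map $\lambda\mapsto\|u_\lambda\|_{L^2(\Omega)}^2$, in the same spirit as in the proofs of Theorems \ref{th2}-\ref{th3}, then shows that its range contains an interval $(0,\mu^{**})$; for every $\mu$ in that interval we obtain the desired pair $(u_\lambda,\lambda)\in H^1(\Omega)\times[0,\hat{\lambda}_{1,1,1}]$. The hard part will be this last step, since $u_\lambda$ is not uniquely determined by $\lambda$ and thus a direct intermediate value theorem is unavailable: one has to carefully track the quantitative dependence of $c_\lambda$ on $\lambda$ and combine it with the lower bound on the mass derived from the AR identity.
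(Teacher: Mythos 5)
Your proposal takes a genuinely different route from the paper, and the route has a genuine gap at its decisive step. The paper never fixes the multiplier: it keeps the penalization functional $E_{r,\mu}(u)=\frac12\|u\|^2-\Psi(u)-f_r\bigl(\|u\|_2^2/\mu\bigr)$ on the open set $U_\mu$, so that the multiplier $\lambda_r=\frac{2}{\mu}f_r'\bigl(\|u_r\|_2^2/\mu\bigr)\geq 0$ is produced by the construction itself; letting $r\to+\infty$ (Lemmas \ref{lembounded}--\ref{lemur}, \ref{lemcin} adapted to $H^1(\Omega)$) yields either a solution with mass exactly $\mu$ and $\lambda\in[0,\hat{\lambda}_{1,1,1}]$, or a critical point of $E$ with mass $\nu<\mu$ and $\lambda=0$, and the second alternative is excluded by the non-existence Lemma \ref{lemnonnomass} (this is where the quarter-factor bounds in $(f_1')$, $(g_1)$ and the joint Ambrosetti--Rabinowitz inequality \eqref{eqarq} are used), applied with $M=\hat{\lambda}_{1,1,1}/2$. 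Your scheme instead needs, for each prescribed small $\mu$, some $\lambda\in[0,\hat{\lambda}_{1,1,1}]$ whose mountain-pass critical point has mass exactly $\mu$. Your estimates only give two endpoint facts: masses of mountain-pass solutions tend to $0$ as $\lambda\to\hat{\lambda}_{1,1,1}^-$, and the mass is positive at $\lambda=0$. Since $\lambda\mapsto u_\lambda$ is multivalued and carries no continuity whatsoever (the mountain-pass level $c_\lambda$ is monotone, hence continuous a.e., but the solutions themselves can jump), the set of achieved masses need not contain any interval, and no intermediate value theorem applies. Your appeal to ``a continuity/connectedness argument in the same spirit as in the proofs of Theorems \ref{th2}--\ref{th3}'' is unfounded: those proofs contain no continuity-in-$\lambda$ argument at all; the penalization method exists precisely to bypass this obstruction. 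The standard tools that could repair your step (the monotonicity trick of Jeanjean, degree or Morse-theoretic continuation) require $f\in C^1$ or additional structure, which this paper explicitly avoids assuming.

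A secondary, fixable error: your claim that the bounds $K_2<\hat{\lambda}_{1,1,1}/4$ and $K_2^g<\tilde{\lambda}_{1,1,1}/4$ yield mountain-pass geometry for $E_\lambda$ with a constant $c_0>0$ uniform on closed subintervals of $[0,\hat{\lambda}_{1,1,1})$ is not correct as written. Absorbing $\frac{\lambda+K_2}{2}\|u\|_2^2+\frac{K_2^g}{2}\|u\|_{L^2(\partial\Omega)}^2$ into $\frac12\|u\|^2$ via \eqref{eqhat}--\eqref{eqtilde} leaves the coefficient $\frac12\bigl(1-\frac{\lambda+K_2}{\hat{\lambda}_{1,1,1}}-\frac{K_2^g}{\tilde{\lambda}_{1,1,1}}\bigr)$, which becomes negative once $\lambda$ exceeds roughly $\hat{\lambda}_{1,1,1}/2$ in the worst case allowed by the hypotheses --- exactly the regime $\lambda\to\hat{\lambda}_{1,1,1}$ that your mass estimate requires. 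The geometry can be rescued, but by a different mechanism: $(f_4)$ and $(g_2)$ imply $t\mapsto F(t)/|t|^q$ and $t\mapsto G(t)/|t|^q$ are monotone on each half-line, hence $F(t)\leq F(\pm1)|t|^q$ and $G(t)\leq G(\pm1)|t|^q$ for $|t|\leq 1$, so $F$ and $G$ have no quadratic part at the origin and no absorption of $K_2$, $K_2^g$ is needed; this must be argued explicitly, and it is a different use of the hypotheses than the one you invoke (in the paper the quarter-factor bounds serve the penalized geometry following Lemma \ref{lemmpg} and, crucially, Lemma \ref{lemnonnomass}, not the geometry of your $E_\lambda$). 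Even with this repair, the mass-matching gap described above remains.
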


By using the Fountain theorem, we can prove the following multiplicity results.
\begin{theorem}\label{th4.3}
Suppose that ($f_1'$), ($f_4$), ($g_1$) and ($g_2$) hold with $N \geq 2$, and suppose that $f(t)=-f(-t)$ and $g(t)=-g(-t)$. For any $m \in \mathbb{N}^+$, there exists $\mu^{**}_{m} > 0$ depending on $\Omega$, $f$ and $g$, such that, for any $0 < \mu < \mu^{**}_{m}$,  $(P)^{\mu}_{1,1,1}$  has at least $m$ nontrivial solutions $(u_1,\bar{\lambda}_1), (u_2,\bar{\lambda}_2),\cdots,(u_m,\bar{\lambda}_m) \in H^{1}(\Omega)\times[0,+\infty)$.
\end{theorem}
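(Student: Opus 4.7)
The plan is to mimic the proof of Theorem \ref{th4} (the Dirichlet multiplicity result) and combine it with the single-solution perturbation argument of Theorem \ref{th2.3}, transferring everything from the Dirichlet setting to the Robin-type setting associated with $\mathcal{B}_{1,1,1}$. The ambient space is $X := H^1(\Omega)$, endowed with the equivalent norm $\|u\|^2 := \int_\Omega |\nabla u|^2\, dx + \int_{\partial\Omega} |u|^2\, d\sigma$, which is the natural quadratic form for the boundary operator. The energy underlying $(P)^\mu_{1,1,1}$ is $E(u) = \tfrac{1}{2}\|u\|^2 - \int_\Omega F(u)\,dx - \int_{\partial\Omega} G(u)\,d\sigma$, and its unconstrained, perturbed version $E_\mu$ is built following the same perturbation scheme as in Theorems \ref{th4} and \ref{th2.3}. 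Since $f(-t) = -f(t)$ and $g(-t) = -g(t)$ make $F$ and $G$ even, $E_\mu$ is $\mathbb{Z}_2$-invariant, which opens the door to the equivariant Fountain theorem.

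Let $\{e_j\}_{j\ge 1}$ be a Hilbert basis of $X$ consisting of eigenfunctions of the Robin eigenvalue problem associated with \eqref{eqhat}, arranged in non-decreasing order of their eigenvalues, and set $Y_k := \mathrm{span}\{e_1,\dots,e_k\}$ and $Z_k := \overline{\mathrm{span}\{e_j : j \ge k\}}$. The Fountain geometry would be verified as follows. On the infinite-codimensional subspaces $Z_k$, I would use the bounds ($f_1'$) and ($g_1$), the control $K_2 < \hat\lambda_{1,1,1}/4$ and $K_2^g < \tilde\lambda_{1,1,1}/4$, and the standard fact that $\sup_{u \in Z_k,\,\|u\|=1}\|u\|_{L^p(\Omega)} \to 0$ and $\sup_{u \in Z_k,\,\|u\|=1}\|u\|_{L^l(\partial\Omega)} \to 0$ as $k \to \infty$ (the subcritical compact embeddings $H^1(\Omega)\hookrightarrow L^p(\Omega)$ and the trace embedding $H^1(\Omega)\hookrightarrow L^l(\partial\Omega)$), to conclude that $b_k := \inf\{E_\mu(u) : u \in Z_k,\,\|u\|=r_k\} \to +\infty$ for a suitably chosen sequence $r_k$. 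On the finite-dimensional $Y_k$, the Ambrosetti--Rabinowitz-type lower bounds $F(t) \ge c|t|^q - C$ and $G(t) \ge c|t|^q - C$ (consequences of ($f_4$) and ($g_2$)), combined with the equivalence of norms on $Y_k$, yield $E_\mu(u)\to -\infty$ uniformly as $\|u\|\to\infty$; hence one selects $\rho_k > r_k$ with $\sup_{Y_k\cap\{\|u\|=\rho_k\}}E_\mu\le 0$.

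Cerami sequences for $E_\mu$ at any level are bounded thanks to ($f_4$) and ($g_2$), and converge strongly (up to a subsequence) by the compactness of the above embeddings, so the Cerami condition holds at every positive level. The equivariant Fountain theorem then produces an unbounded sequence of critical values, hence infinitely many critical pairs $(u_j,\lambda_j)$ of $E_\mu$. Finally, for a fixed $m \in \mathbb{N}^+$, a quantitative upper bound on the first $m$ minimax levels, together with a sufficiently small $\mu^{**}_m$, allows one to run the same recovery argument as in the proof of Theorem \ref{th2.3}: it guarantees that each of the first $m$ critical points satisfies the mass constraint $\int_\Omega u_j^2\, dx = \mu$, and that the associated Lagrange multipliers $\bar\lambda_j$ lie in $[0,+\infty)$.

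The main obstacle is the simultaneous control of the bulk and boundary nonlinearities: verifying Fountain geometry on $Z_k$ and the Cerami condition requires a careful quantitative balance between the Sobolev and trace inequalities, the smallness of $K_2$ and $K_2^g$ imposed by ($f_1'$)--($g_1$), and the growth rate $q$ dictated by ($f_4$)--($g_2$). This quantitative balance is precisely what forces $\mu^{**}_m$ to depend on $m$ (through the growth of the minimax levels), on $\Omega$ (through the eigenvalues and embedding constants), and on both $f$ and $g$.
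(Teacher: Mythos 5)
Your overall architecture (Robin norm on $H^1(\Omega)$, eigenfunctions of \eqref{eqhat} for the linking decomposition, perturbation/truncation scheme, recovery of the constraint via a non-existence lemma for small mass) matches the paper, but the central step of your Fountain geometry is genuinely flawed. You claim that $b_k:=\inf\{E_\mu(u): u\in Z_k,\ \|u\|=r_k\}\to+\infty$ and that the equivariant Fountain theorem therefore yields an \emph{unbounded} sequence of critical values at \emph{fixed} $\mu$. In the constrained/penalized setting this is unsupported. The truncated functional $J_{r,\mu}$ of \eqref{Jrmu} equals $-1$ outside $U_\mu=\{\|u\|_2^2<\mu\}$, and on $Z_k$ the maximum of $\|u\|_2^2$ over the sphere $\{\|u\|=r_k\}$ equals $r_k^2/\hat{\lambda}_k$ (attained at multiples of the $k$-th eigenfunction); hence if $r_k^2>\mu\hat{\lambda}_k$ the sphere meets the region where the functional is $-1$ and $b_k\leq-1$. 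So you are forced to take $r_k\leq\sqrt{\mu\hat{\lambda}_k}$, and with this ceiling the merely qualitative facts $\sup_{u\in Z_k,\|u\|=1}\|u\|_{L^p(\Omega)}\to0$ and $\sup_{u\in Z_k,\|u\|=1}\|u\|_{L^l(\partial\Omega)}\to0$ are not enough: one needs quantitative decay relative to $\hat{\lambda}_k$, i.e.\ Gagliardo--Nirenberg-type bounds. For the interior term, GN gives a nonlinear contribution of order $\mu^{(p-2)/2}\hat{\lambda}_k^{(N(p-2)-4)/4}$ relative to $r_k^2$, which \emph{blows up} as $k\to\infty$ whenever $p>2+\frac{4}{N}$ (allowed under ($f_1'$)); for the boundary term, the needed trace GN inequality is precisely the inequality the paper states is not known to hold (this is announced as the main difficulty before Theorems \ref{th2.3} and \ref{th4.3}). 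Consequently $b_k\to+\infty$ cannot be established, and the whole point of the paper's proof is that the geometry can only be kept for \emph{finitely many} levels at a time: one fixes $m$, selects indices $k_1<\cdots<k_m$, uses only the continuous embeddings \eqref{eqgnh1} and \eqref{eqsi}, and buys positivity and strict ordering of these $m$ levels (all below $\mu\hat{\lambda}_{k_m}/2$) by shrinking $\mu$; this is exactly where the dependence of $\mu^{**}_m$ on $m$ comes from, and it is incompatible with your claim of infinitely many critical values at fixed $\mu$.

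A secondary but related misconception is your treatment of $Y_k$: inside the penalized region, $Y_k\cap U_\mu$ is a \emph{bounded} set (all norms on $Y_k$ are equivalent, so $\|u\|\to\infty$ forces $\|u\|_2^2\geq\mu$), hence the statement ``$E_\mu(u)\to-\infty$ uniformly as $\|u\|\to\infty$ on $Y_k$ by the Ambrosetti--Rabinowitz bounds'' is vacuous for the perturbed functional. The paper instead takes $\rho_{r,k}=\sqrt{\mu\hat{\lambda}_k}$, so that every $u\in Y_k$ with $\|u\|=\rho_{r,k}$ satisfies $\|u\|_2^2\geq\mu$ and the truncated functional is identically $-1$ there; the AR conditions ($f_4$), ($g_2$) are used only for boundedness of Cerami sequences and in the non-existence Lemma \ref{lemnonnomass}. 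Your final ``recovery'' step (upper bounds on the first $m$ levels plus Lemma \ref{lemnonnomass} with small $\mu^{**}_m$, and the limit $r\to+\infty$ in the perturbation parameter as in Lemmas \ref{lembounded}--\ref{lemur}) is the right idea and matches the paper, but it rests on the level structure produced by the geometry, so the gap above propagates: as written, the proof does not go through.
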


The main difficulty in proving Theorems \ref{th2.3} and \ref{th4.3} lies in the fact that  that it is unclear whether the following  Gagliardo-Nirenberg type inequality holds for each $l \in (2, 2^{\hexstar})$:
\begin{equation*}
	\|u\|_{L^l(\partial \Omega)}^l \leq C_{l,N} \|u\|_{L^2(\Omega)}^{(1 - \beta_l)l} \|u\|_{H^{1}(\Omega)}^{\beta_l l}, \quad \forall u \in  H^1(\Omega),
\end{equation*}
which plays a crucial role in proving  that the energy functional is bounded from below and coercive on $S_{1,1,1}(\mu):=\{u \in H^1(\Omega): \int_{\Omega}\abs{u}^2\, dx=\mu\}$ in the  $L^2$-subcritical and $L^2$-critical cases, or possesses the mountain pass geometry on $S_{1,1,1}(\mu)$ in the $L^2$-supercritical case. In our approach, we shall avoid using on the above inequality. To the best of our knowledge, there are no existence and multiplicity results associated with problem $(P)^{\mu}_{1,1,1}$ in the literature.

For the case where $\zeta=1$ and $\alpha=\gamma=0$, we are working with the problem below
$$
\left\{
\begin{array}{ll}
	-\Delta u = \lambda u + f(u)\quad & \text{in } \Omega, \\
	\frac{\partial u}{\partial \eta } = 0 & \text{on } \partial \Omega, \\
	\int_{\Omega} u^2\,dx = \mu.
\end{array}
\right.
\leqno{(P)^{\mu}_{0,1,0}}
$$
In the study of problem above,  let us denote by $\lambda_{0,1,0}=1$ the first eigenvalue of the operator $(-\Delta+I, H^{1}(\Omega))$. We need the following assumption.
\begin{enumerate}[align=left,labelsep=0pt]
	\item[($f_1''$)] There exist constants $2<p<2^*$ and $0\leq K_2<\lambda_{0,1,0}$ and $K_{p} \geq 0$ such that
	$$
	\abs{f(x,t)} \leq K_2\abs{t} + K_{p}\abs{t}^{p-1}
	$$
	where $2^* = \frac{2N}{N-2}$ if $N \geq 3$ and $2^* = +\infty$ if $N = 1, 2$.
\end{enumerate}

\begin{theorem}\label{th4.2}
	Suppose that  ($f_1''$), ($f_2$) and ($f_3$) hold or ($f_1''$) and ($f_4$) hold, and suppose that $f(t)=-f(-t)$. For any $m \in \mathbb{N}^+$, there exists $\mu_{m}^{***}> 0$ depending on $\Omega$ and $f$, such that, for any $0 < \mu < \mu_{m}^{***}$,  $(P)^{\mu}_{0,1,0}$  has at least $m-1$ nontrivial solutions $(u_1,\bar{\lambda}_1), (u_2,\bar{\lambda}_2),\cdots,(u_{m-1},\bar{\lambda}_{m-1}) \in H^{1}(\Omega)\times[-1,+\infty)$ and, for all $1\leq i\leq m-1$, $u_i$ is not a constant function.
\end{theorem}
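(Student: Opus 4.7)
The plan is to mirror the approach of Theorems \ref{th4} and \ref{th4.3}: apply the symmetric Fountain theorem to the energy functional $E(u)=\tfrac12\int_\Omega|\nabla u|^2\,dx-\int_\Omega F(u)\,dx$ on the Hilbert manifold $S(\mu):=\{u\in H^1(\Omega):\int_\Omega u^2\,dx=\mu\}$, within the perturbation framework of \cite{Bu,Es} employed throughout the paper, and then post-process to discard the critical pair that comes from the constant solutions $\pm\sqrt{\mu/|\Omega|}$. Since $f$ is odd, $E$ is even on the symmetric manifold $S(\mu)$, so the $\mathbb Z_2$-equivariant min-max apparatus applies.

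I would proceed as follows. Let $\{\phi_k\}_{k\ge 0}\subset H^1(\Omega)$ be an $L^2$-orthonormal basis of Neumann eigenfunctions of $-\Delta$, with $\phi_0=|\Omega|^{-1/2}$ and eigenvalues $0=\nu_0<\nu_1\le\nu_2\le\cdots$. For $m\in\mathbb N^+$ define $Y_m=\mathrm{span}\{\phi_0,\ldots,\phi_{m-1}\}$ and $Z_m=\overline{\mathrm{span}\{\phi_k:k\ge m\}}$. Using $(f_1'')$, the Gagliardo-Nirenberg inequality, and either $(f_2)$-$(f_3)$ or $(f_4)$, choose $\mu_m^{***}>0$ so small that, for $0<\mu<\mu_m^{***}$, the restriction $E|_{S(\mu)}$ exhibits the Fountain linking between $Y_m\cap S(\mu)$ (a finite-dimensional symmetric set on which $E$ stays bounded above) and $Z_m\cap S(\mu)$ (where the spectral inequality $\|\nabla u\|_{L^2}^2\ge\nu_m\mu$ yields a uniform lower barrier for $E$). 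Compactness of $H^1(\Omega)\hookrightarrow L^p(\Omega)$ for $2\le p<2^*$ together with boundedness of Palais-Smale sequences (from $(f_4)$, or from the perturbation trick already used for Theorem \ref{th4}) gives the constrained Palais-Smale condition. The Fountain theorem then produces $m$ pairs of critical points $\pm u_1,\ldots,\pm u_m\in S(\mu)$ solving $(P)^\mu_{0,1,0}$ with Lagrange multipliers $\bar\lambda_i$; the lower bound $\bar\lambda_i\ge-1$ follows by testing the Euler-Lagrange equation against $u_i$ and exploiting $K_2<\lambda_{0,1,0}=1$ from $(f_1'')$ along with the smallness of $\mu$.

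To pass from $m$ to $m-1$ non-constant solutions, observe that the only constant elements of $S(\mu)$ are $\pm c_\mu:=\pm\sqrt{\mu/|\Omega|}$, which form a single critical pair of $E|_{S(\mu)}$ at the level $-|\Omega|F(c_\mu)$; since $c_\mu\phi_0\in Y_1$, this pair is naturally detected by the first min-max value $c_1$ of the Fountain filtration. After shrinking $\mu_m^{***}$ if necessary so that $c_2<\cdots<c_m$ are strictly ordered and separated from $-|\Omega|F(c_\mu)$, the pairs $\pm u_2,\ldots,\pm u_m$ supply the required $m-1$ non-constant solutions, relabelled as $u_1,\ldots,u_{m-1}$ in the statement.

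The main obstacle I anticipate is the Fountain geometry step. Because the first Neumann eigenvalue is zero, there is no global Poincaré inequality on $H^1(\Omega)$: every lower estimate on $Z_m\cap S(\mu)$ must be routed through the spectral gap $\nu_m>0$, which forces $\mu_m^{***}$ to shrink with $m$. Balancing the smallness of $\mu$ against the Gagliardo-Nirenberg constants controlling $\|u\|_{L^p}$ and the growth hypotheses on $F$ is delicate, and one must simultaneously arrange that $c_1$ corresponds to the constant energy while $c_2,\ldots,c_m$ lie strictly above it, so that exactly one critical pair is absorbed by constants and at least $m-1$ genuinely non-constant solutions remain.
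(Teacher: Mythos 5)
Your overall plan (Fountain theorem inside the perturbation framework, then discard the constants) matches the paper's strategy, but two steps as written do not hold up. First, and most seriously, you never confront the mass-leakage dichotomy that the perturbation method creates, and the device you invoke instead --- a ``constrained Palais--Smale condition'' for $E|_{S(\mu)}$ --- is precisely what is unavailable here: boundedness of constrained PS sequences in the $L^2$-supercritical regime on a bounded domain is the core obstruction (no scaling invariance, Pohozaev identity with a boundary term), and it is the very reason the paper perturbs at all. In the paper's scheme the Fountain theorem is applied to $J_{r,\mu}$ on the whole space; after letting $r\to+\infty$, Lemma \ref{lemur} only yields the alternative: either a critical point of $E$ on the full sphere with nonnegative (shifted) multiplier, or a critical point on $S(\nu)$ for some $\nu<\mu$ with multiplier $0$. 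Excluding the second case is where the smallness of $\mu$ enters, through the non-existence Lemma \ref{lemnex} adapted to the Neumann setting. Moreover, that adaptation requires the paper's reformulation of $(P)^{\mu}_{0,1,0}$ as $\tilde{(P)}^{\mu}_{1,0,0}$ with the coercive operator $-\Delta+I$: the lemma's key constant is then $\lambda_{0,1,0}-K_2=1-K_2>0$, which is exactly what ($f_1''$) is calibrated to. Working with the degenerate Neumann operator and routing everything through the spectral gap $\nu_m$, as you propose, makes this constant $\nu_0-K_2=-K_2<0$ and the non-existence argument collapses. Your claim that $\bar\lambda_i\geq-1$ ``follows by testing the Euler--Lagrange equation'' is also unfounded; in the paper this bound is purely an artifact of the unit shift (the limiting shifted multipliers $\frac{2}{\mu}f_r'(\cdot)$ are nonnegative, hence the original ones lie in $[-1,+\infty)$).

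Second, your mechanism for discarding the constants is not justified: there is no argument that the pair $\pm\sqrt{\mu/|\Omega|}$ is ``detected by the first min-max value $c_1$,'' nor that shrinking $\mu$ separates $c_2,\dots,c_m$ from the level $-|\Omega|F(c_\mu)$ (both the min-max levels and the constant level move with $\mu$, and you give no comparison). The paper needs neither claim; it uses a pigeonhole: the $m$ solutions produced by the Fountain scheme sit at pairwise distinct energies (they are trapped in the disjoint windows between consecutive $\frac{\mu\lambda_{k_{i-1}}}{2}$ and $\frac{\mu\lambda_{k_i}}{2}$), while the only constant elements of the sphere are $\pm\sqrt{\mu/|\Omega|}$ and these share a single energy because $F$ is even; hence at most one of the $m$ solutions can be a constant, and removing it leaves $m-1$ non-constant ones. (If you insist on a separation statement, it is in fact automatic in the shifted framework: for $i\geq 2$ the levels exceed $\frac{\mu\lambda_{0,1,0}}{2}=\frac{\mu}{2}$, whereas the shifted energy of the constants is $\frac{\mu}{2}-|\Omega|F\bigl(\sqrt{\mu/|\Omega|}\bigr)\leq\frac{\mu}{2}$ since $F\geq 0$; no shrinking of $\mu$ is involved.) As it stands, then, your proposal needs the shift to $-\Delta+I$, the non-existence lemma to kill the $\nu<\mu$ alternative, and the pigeonhole (or the energy comparison just described) in place of the unsupported ``detected by $c_1$'' step.
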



Finally, we would like to emphasize that our perturbation method is applicable to a broader class of problems. It does not require any additional geometric condition on the domain and can be applied to other interesting classes of problems, such as:
\begin{itemize}
	\item[(a)] The nonlinear Schr\"{o}dinger equations with exponential critical growth in $\mathbb{R}^2$;
	\item[(b)] The nonlinear Schr\"{o}dinger equations with magnetic fields;
	\item[(c)] Bi-harmonic equations;
	\item[(d)] Choquard equations.
\end{itemize}
We refer the reader to  Section \ref{secfinalremarks} for more details.

The rest of the paper is organized as follows. In Section \ref{secp100}, we present a detailed proof of the existence and multiplicity results for problem $(P)^{\mu}_{1,0,0}$. In Section \ref{secp111} and \ref{secp010}, we provide sketches of the proofs of the existence and multiplicity results for problem $(P)^{\mu}_{1,1,1}$ and $(P)^{\mu}_{0,1,0}$, respectively. In Section \ref{secgs}, we characterize the nature of solutions obtained by perturbation method and prove the existence of ground states to $(P)^{\mu}_{1,0,0}$ when $\Omega$ is star shaped and $f$ satisfies ($f_1$), ($f_2$) and ($f_4$). In Section \ref{secfinalremarks}, we point out some problems that our approach can be applied.  In Section \ref{appa}, we give a proof of the  mountain pass type theorem for $C^1$ functionals defined on open subsets, as used in \cite{Bu,Es}.
\section{The problem \texorpdfstring{$(P)^\mu_{1,0,0}$}{Lg}}\label{secp100}
In this section, we endow $H_0^1(\Omega)$ with the standard norm
$$
\|u\|:=\left(\int_{\Omega}|\nabla u|^2\,dx\right)^{\frac{1}{2}},
$$
which induces the scalar product
$$
(u,v):=\int_{\Omega}\nabla u \nabla v\,dx.
$$
For $L^p(\Omega)$ spaces, we use the usual norm denoted by $\norm{u}_p$. The scalar product induced by $L^2(\Omega)$-norm is denoted by
$$
(u,v)_2:=\int_{\Omega} u v\,dx.
$$

We consider the energy functional $E: H^1_0(\Omega) \to \mathbb{R}$ associated with problem  $(P)^\mu_{1,0,0}$, defined by
$$
E(u) := \frac{1}{2}\norm{u}^{2}-\Psi(u),
$$
where
$$
\Psi(u)=\int_\Omega  F(u)\,dx.
$$
\label{sec:preliminaries}
\subsection{The Gagliardo-Nirenberg inequalities}
\label{subsecineq}
Since $H_0^1(\Omega)\subset H(\R^N)$, by the Gagliardo-Nirenberg inequality for  $\R^N$ (see \cite[Theorem 1.1]{Fio}),  for any  $p \in [2, 2^*)$, there exists a constant $C_{p,N}>0$, depending only on  $p$ and $N$, such that
\begin{equation}\label{eqgn}
    \|u\|_{p}^p \leq C_{p,N} \|u\|_{2}^{(1 - \beta_p)p} \|\nabla u\|_{2}^{\beta_p p} \quad \forall u \in  H_0^1(\Omega),
\end{equation}
where $\beta_p = N \left( \frac{1}{2} - \frac{1}{p} \right)$.
\subsection{The perturbation functional}\label{subsecpf}
We define the following perturbation functional
$$
E_{r, \mu}(u) := \frac{1}{2}\norm{u}^{2}-\Psi(u)-H_{r,\mu}(u),\; u \in U_{\mu},
$$
where $U_{\mu} := \{u \in H_0^1(\Omega): \norm{u}_2^2<\mu\}$, and $H_{r, \mu}(u)$ is a penalization term defined by
$$
H_{r, \mu}(u) := f_{r}\left(\frac{\norm{u}_2^2}{\mu}\right) \text { with } f_{r}(s) := \frac{s^{r}}{1-s} \; \quad \forall s \in [0,1),
$$
and $r>1$ is a parameter that will be chosen large enough. A straightforward computation gives
\begin{equation}\label{eqfr's}
    f_{r}^{\prime}(s)=\frac{r s^{r-1}}{1-s}+\frac{s^{r}}{(1-s)^{2}}>\frac{r}{s} f_{r}(s)>0, \; \quad \forall s \in (0,1)
\end{equation}
and
\begin{equation} \label{eqfr''s}
	f''_r(s)=\frac{r(r-1)s^{r-2}}{1-s}+\frac{rs^{r-1}}{(1-s)^2}+\frac{rs^{r-1}}{(1-s)^2}+\frac{2s^{r}}{(1-s)^3}>0,\; \quad \forall s \in (0,1),
\end{equation}
from where it follows that $f'_r$ is increasing for $s \in [0,1)$. Fixing $h_r(s)=f'_r(s)s-f_r(s)$ for $s \in [0,1)$, the information above yields
\begin{equation} \label{eqhrs}
	h'_r(s)=f''_r(s)s>0, \; \quad \forall s \in (0,1),
\end{equation}
showing that $h_r$ is an increasing function for $s \in [0,1)$.

It is standard to prove that $E_{r, \mu} \in C^1(U_{\mu},\R)$ and for any $u \in U_{\mu}$ and $v \in H_0^1(\Omega)$, one has
$$
\langle E_{r, \mu}'(u),v\rangle = (u,v)-\int_\Omega f(x,u)v \,dx-\frac{2}{\mu}f'_r\left(\frac{\norm{u}_2^2}{\mu}\right)(u,v)_2.
$$

Let $\beta \in C^\infty(\mathbb{R},\mathbb{R})$ be such that
$$
\begin{cases}
    \beta \equiv -1 &\text{ on } (-\infty,-1),\\
    \beta (t)=t, &\forall t\geq 0,\\
    \beta (t)\leq 0,  &\forall t\leq  0.
\end{cases}
$$
We define a new functional $J_{r,\mu}: H^1_0(\Omega) \to \mathbb{R}$ by
\begin{equation} \label{Jrmu}
J_{r,\mu}(u) =
\begin{cases}
    \beta(E_{r,\mu}(u)) &\text{ if } \norm{u}_2^2<\mu,\\
    -1  &otherwise.
\end{cases}
\end{equation}
By Lemma \ref{lemc1}, $J_{r,\mu}\in C^1(H^1_0(\Omega),\R)$. Moreover, if $u$ is a critical point of $J_{r,\mu}$ with $J_{r,\mu}(u) \geq 0$, then $u$ is also a critical point of $E_{r,\mu}$ at the same energy level.  The same conclusion holds for Cerami sequences. Therefore, instead of $E_{r,\mu}$, we can look for positive min-max levels of $J_{r,\mu}$, which is more convenient, since  $J_{r,\mu}$ is defined on $H^1_0(\Omega)$, and satisfies $J_{r,\mu}=-1$ on $\partial U_\mu$.
\subsection{Compactness of Cerami sequences}\label{seccompact}
In this subsection, we always assume that ($f_1$)-($f_3$) hold or ($f_1$) and ($f_4$) hold. Fix $\mu >0$. Assume that for any $r >1$ sufficiently large, there exists a sequence $\{u_{n,r}\}_{n\geq 1}\subset H^1_0(\Omega)$ such that $\{u_{n,r}\}_{n\geq 1}$ is a Cerami sequence of $E_{r,\mu}$ at level $c_r>0$, i.e., $\{u_{n,r}\}_{n\geq 1}$ satisfies
\begin{equation} \label{ceramisequence}
E_{r,\mu}(u_{n,r}) \rightarrow c_r \text{ and } (1+\norm{u_{n,r}})\norm{E_{r, \mu}^{\prime}(u_{n,r})} \rightarrow 0 \quad \text{ as } n \to +\infty.
\end{equation}

Moreover, we assume that $r \mapsto c_r$ is non-decreasing.
Define
$$
\lambda_{n,r} := \frac{2}{\mu}f'_r\left(\frac{\norm{u_{n,r}}_2^2}{\mu}\right).
$$
\begin{lemma}\label{lembounded}
    For $r>1$ sufficiently large, $\{u_{n,r}\}_{n\geq 1}$ is bounded in $H^1_0(\Omega)$.
\end{lemma}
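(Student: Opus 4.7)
The proof plan exploits the automatic $L^2$ control built into the perturbation framework: since $u_{n,r} \in U_\mu$, we have $\|u_{n,r}\|_2^2 < \mu$, so only the Dirichlet norm $\|u_{n,r}\|$ requires an estimate. Writing $s_{n,r} := \|u_{n,r}\|_2^2/\mu \in [0,1)$, the central algebraic identity for any $\alpha > 0$ reads
\[
\alpha E_{r,\mu}(u_{n,r}) - \langle E'_{r,\mu}(u_{n,r}), u_{n,r}\rangle
= \Bigl(\tfrac{\alpha}{2}-1\Bigr)\|u_{n,r}\|^2
+ \int_\Omega \bigl(f(u_{n,r})u_{n,r} - \alpha F(u_{n,r})\bigr)\,dx
+ 2f'_r(s_{n,r})s_{n,r} - \alpha f_r(s_{n,r}),
\]
and by \eqref{ceramisequence} its left-hand side equals $\alpha c_r + o(1)$, since $|\langle E'_{r,\mu}(u_{n,r}), u_{n,r}\rangle| \leq \|u_{n,r}\|\,\|E'_{r,\mu}(u_{n,r})\| = o(1)$. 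The strategy is then to choose $\alpha$ according to which hypothesis set is in force so that every term on the right other than $(\alpha/2-1)\|u_{n,r}\|^2$ has a controllable sign.

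In the Ambrosetti-Rabinowitz case ($f_1$)+($f_4$), I choose $\alpha = q$. Then ($f_4$) yields $\int(f(u)u - qF(u))\,dx \geq 0$, and \eqref{eqfr's} gives $2f'_r(s)s - q f_r(s) > (2r - q)f_r(s) \geq 0$ whenever $r > q/2$. Consequently $(q/2 - 1)\|u_{n,r}\|^2 \leq q c_r + o(1)$, and the boundedness follows at once.

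In the non-Ambrosetti-Rabinowitz case ($f_1$)--($f_3$), I choose $\alpha = 2$ so that the quadratic term cancels. Hypothesis ($f_2$) (monotonicity of $t \mapsto f(t)/|t|$) gives $f(t)t - 2F(t) \geq 0$, while \eqref{eqhrs} gives $h_r(s_{n,r}) := f'_r(s_{n,r})s_{n,r} - f_r(s_{n,r}) \geq 0$. Since both non-negative summands sum to $2c_r + o(1)$, each is bounded. Using the lower bound $h_r(s) \geq (r-1)f_r(s)$ derived from \eqref{eqfr's}, this forces $f_r(s_{n,r})$ bounded and hence $s_{n,r} \leq 1 - \delta_r$ for some $\delta_r > 0$, whence $f'_r(s_{n,r})s_{n,r}$ is bounded. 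Testing $\langle E'_{r,\mu}(u_{n,r}), u_{n,r}\rangle = o(1)$ then produces $\|u_{n,r}\|^2 = \int f(u_{n,r})u_{n,r}\,dx + O(1)$, and combining ($f_1$) with the Gagliardo-Nirenberg inequality \eqref{eqgn} (applied with $\|u_{n,r}\|_2^2 \leq \mu$) yields
\[
\|u_{n,r}\|^2 \leq C_1 + C_2\,\mu^{(1-\beta_p)p/2}\|u_{n,r}\|^{\beta_p p},
\]
which closes immediately in the $L^2$-subcritical range $\beta_p p < 2$, and via absorption when $\beta_p p = 2$ provided $\mu$ is small enough.

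The main obstacle will be the $L^2$-supercritical non-AR regime $\beta_p p > 2$, where the displayed inequality no longer closes by absorption. Here I expect to argue by contradiction: assuming $\|u_{n,r}\| \to \infty$ and setting $v_{n,r} := u_{n,r}/\|u_{n,r}\|$, the bound $\|u_{n,r}\|_2 \leq \sqrt{\mu}$ forces $\|v_{n,r}\|_2 \to 0$, and Rellich--Kondrachov plus Gagliardo-Nirenberg give $v_{n,r} \to 0$ strongly in $L^q$ for every $q < 2^*$. Dividing $\langle E'_{r,\mu}(u_{n,r}), u_{n,r}\rangle = o(1)$ by $\|u_{n,r}\|^2$ yields $1 = \int (f(u_{n,r})/u_{n,r})\,v_{n,r}^2\,dx + o(1)$, which I then aim to contradict by combining a refined Gagliardo-Nirenberg estimate on $v_{n,r}$ with the smallness of $\mu$ built into the hypothesis $\mu < \mu_0$ of Theorem \ref{th2}.
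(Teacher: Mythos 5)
Your Ambrosetti--Rabinowitz case is correct and is exactly the paper's argument: test with $\alpha=q$, take $r>q/2$, and use \eqref{eqfr's} to absorb the penalization terms. The genuine gap is in the case ($f_1$)--($f_3$), precisely in the $L^2$-supercritical range $\beta_pp>2$, which is the regime this lemma exists to handle. Your steps up to $\|u_{n,r}\|^2=\int_\Omega f(u_{n,r})u_{n,r}\,dx+O(1)$ are sound (the bound on $h_r(s_{n,r})$, hence on $f_r(s_{n,r})$ and $f_r'(s_{n,r})s_{n,r}$, is a nice observation), but the concluding contradiction cannot close. Dividing $\langle E_{r,\mu}'(u_{n,r}),u_{n,r}\rangle=o_n(1)$ by $\|u_{n,r}\|^2$ and estimating with ($f_1$) and \eqref{eqgn} gives only
$$
1\leq \frac{K_2\mu}{\|u_{n,r}\|^2}+K_pC_{p,N}\,\mu^{\frac{(1-\beta_p)p}{2}}\|u_{n,r}\|^{\beta_pp-2}+o_n(1),
$$
and since $\beta_pp-2>0$ the right-hand side diverges as $\|u_{n,r}\|\to\infty$: smallness of $\mu$ only shrinks the constant multiplying a divergent factor, so no choice of $\mu_0$ produces a contradiction. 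Separately, the lemma is stated for every fixed $\mu>0$; in the paper the smallness $\mu<\mu_0$ enters only later, through the non-existence Lemma \ref{lemnex}, to exclude alternative (ii) of Lemma \ref{lemur}. So even your critical case $\beta_pp=2$, which needs $K_pC_{p,N}\mu^{2/N}<1$, proves less than what is claimed.

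The idea you are missing is that ($f_3$) --- which your proposal never invokes --- is exactly what handles the supercritical range, via a Jeanjean-type argument rather than Gagliardo--Nirenberg absorption. The paper argues as follows: let $t_n\in[0,1]$ maximize $t\mapsto E_{r,\mu}(tu_{n,r})$; in the nontrivial case $t_n\in(0,1)$ one has $\langle E_{r,\mu}'(t_nu_{n,r}),t_nu_{n,r}\rangle=0$, and the monotonicity of $s\mapsto f(s)s-2F(s)$ (from ($f_2$)) together with the monotonicity of $h_r$ (from \eqref{eqhrs}) yield $2E_{r,\mu}(t_nu_{n,r})\leq 2E_{r,\mu}(u_{n,r})-\langle E_{r,\mu}'(u_{n,r}),u_{n,r}\rangle=2c_r+o_n(1)$, so the maxima are bounded above. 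Now suppose $\|u_{n,r}\|\to\infty$ and set $v_n=u_{n,r}/\|u_{n,r}\|\rightharpoonup v$. If $v\neq0$, then $|u_{n,r}|\to\infty$ on $\Omega_*=\{v\neq 0\}$, and ($f_2$), ($f_3$) and Fatou's lemma force
$$
0=\lim_{n\to\infty}\frac{E_{r,\mu}(u_{n,r})}{\|u_{n,r}\|^2}\leq \frac12-\int_{\Omega_*}\liminf_{n\to\infty}\frac{F(u_{n,r})}{(u_{n,r})^2}v_n^2\,dx=-\infty,
$$
a contradiction; hence $v=0$. Then for every fixed $B>0$, since $B/\|u_{n,r}\|\in[0,1]$ for large $n$ and the embeddings $H_0^1(\Omega)\hookrightarrow L^p(\Omega)$, $p<2^*$, are compact,
$$
E_{r,\mu}(t_nu_{n,r})\geq E_{r,\mu}(Bv_n)=\frac{B^2}{2}-\int_\Omega F(Bv_n)\,dx-f_r\left(\frac{B^2\|v_n\|_2^2}{\mu}\right)\to\frac{B^2}{2},
$$
contradicting the upper bound because $B$ is arbitrary. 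This is where the superquadraticity of $F$ is consumed; without it, your chain of estimates cannot be closed in the supercritical case.
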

\begin{proof}
We first assume that ($f_1$)-($f_3$) hold. Let $t_n \in [0, 1]$ be such that $E_{r,\mu}(t_n u_{n,r}) = \max_{t \in [0, 1]} E_{r,\mu}(tu_{n,r})$. We claim that $\{E_{r,\mu}(t_n u_{n,r})\}_{n \geq 1}$ is bounded from above. In fact, if either $t_n = 0$ or $t_n = 1$, we are done. Thereby, we can assume $t_n \in (0, 1)$, and so, $E_{r,\mu}'(t_n u_{n,r}) u_{n,r} = 0$. From this,
$$
\begin{aligned}
    2E_{r,\mu}(t_n u_{n,r}) =& 2E_{r,\mu}(t_n u_{n,r}) - E_{r,\mu}'(t_n u_{n,r}) t_n u_{n,r} \\
    =& \int_{\Omega} \left[f(t_n u_{n,r})t_n u_{n,r}-2F(t_n u_{n,r})\right]\,dx \\
    &+ 2f'_r\left(\frac{t_n^2\norm{u_{n,r}}_2^2}{\mu}\right)\frac{t_n^2\norm{u_{n,r}}^2_{2}}{\mu} -2f_r\left(\frac{t_n^2 \norm{u_{n,r}}_2^2}{\mu}\right).
\end{aligned}
$$
The condition ($f_2$) gives that $s \mapsto f(s)s-2F(s)$ is non-decreasing on $(0,+\infty)$ and non-increasing on $(-\infty,0)$. By \eqref{eqhrs}, we know that  $s \mapsto h_r(s)=f_r'(s)s-f(s)$ is increasing on $[0,1)$, and thus,
$$
\begin{aligned}
2E_{r,\mu}(t_n u_{n,r}) &\leq \int_{\Omega}\left[f(u_{n,r})u_{n,r}-2F(u_{n,r})\right]\,dx + 2f'_r\left(\frac{\norm{u_{n,r}}_2^2}{\mu}\right)\frac{\norm{u_{n,r}}^2_{2}}{\mu} -2f_r\left(\frac{ \norm{u_{n,r}}_2^2}{\mu}\right)\\
&= 2E_{r,\mu}(u_{n,r}) - E_{r,\mu}'(u_{n,r}) u_{n,r} = 2E_{r,\mu}(u_{n,r}) + o_n(1).
\end{aligned}
$$
Since $\{E_{r,\mu}(u_{n,r})\}_{n \geq 1}$ converges, it follows that $\{E_{r,\mu}(t_n u_{n,r})\}_{n \geq 1}$ is bounded from above.

To prove the boundedness of $\{u_{n,r}\}_{n\geq 1}$, we argue by contradiction. Suppose, on the contrary,  that $\norm{u_{n,r}}\to+\infty$ as $n \to +\infty$. Let $v_n=u_{n,r}/\norm{u_{n,r}}$, then $\norm{v_n}=1$. Then, up to a subsequence, there exists $v \in H^1_0(\Omega)$ such that $v_n \rightharpoonup v$. Next, we will show that $v = 0$. Otherwise, for $x\in \Omega_*=\{y\in\Omega:v(y)\neq 0\}$, we have $\displaystyle \lim_{n\to\infty}|u_{n,r}(x)|=\infty$. Hence, it follows from ($f_2$)-($f_3$) and Fatou's lemma that
\begin{equation*}
    \begin{aligned}
0 =\lim_{n\to\infty}\frac{c_r+o_n(1)}{\norm{u_{n,r}}^2}&=\lim_{n\to\infty}\frac{E_{r,\mu}(u_{n,r})}{\norm{u_{n,r}}^2} \\
&\leq \lim_{n\to\infty}\left[\frac{1}{2}-\int_{\Omega_*}\frac{F(u_{n,r})}{(u_{n,r})^2}v_n^2\,dx\right] \\
&\leqslant\frac{1}{2}-\liminf_{n\to\infty}\int_{\Omega_*}\frac{F(u_{n,r})}{(u_{n,r})^2}v_n^2\,dx
\leqslant\frac{1}{2}-\int_{\Omega_*}\liminf_{n\to\infty}\frac{F(u_{n,r})}{(u_{n,r})^2}v_n^2\,dx
=-\infty.
\end{aligned}
\end{equation*}
This contradiction shows that $v=0$.

Notice that for each $B > 0$, one has $\frac{B}{\|u_{n,r}\|} \in [0, 1]$ and $\frac{B^2\norm{v_{n}}_2^2}{\mu}= \frac{B^2}{\norm{u_{n,r}}^2}\in [0,1)$ for $n$ sufficiently large. Thus,
$$E_{r,\mu}(t_n u_{n,r}) \geq E_{r,\mu} \left( \frac{B}{\|u_{n,r}\|} u_{n,r} \right) = E_{r,\mu}(B v_n) = \frac{B^2}{2} - \int_{\Omega} F(B v_n)- f_r\left(\frac{ B^2\norm{v_{n}}_2^2}{\mu}\right).$$
Since the embedding $H^1_0(\Omega)\hookrightarrow L^p(\Omega)$ is compact for all $2\leq p<2^*$, we have
$$\liminf_{n \to \infty} E_{r,\mu}(t_n u_{n,r}) \geq \frac{B^2}{2} \quad \text{for every } B > 0,$$
which is a contradiction showing that $\{u_{n,r}\}_{n \geq 1}$ is bounded in $H^1_0(\Omega)$, once that $\{E_{r,\mu}(t_n u_{n,r})\}_{n \geq 1}$ is bounded from above.

Now, we assume that ($f_1$) and ($f_4$) hold. Then, by choosing $r>\frac{q}{2}$, ($f_4$) combined with \eqref{eqfr's} leads to
    $$
    \begin{aligned}
            \norm{u_{n,r}}^2 &\leq \frac{2qE(u_{n,r})-2\langle E'(u_{n,r}),u_{n,r}\rangle}{q-2}\\
            &\leq \frac{2qE(u_{n,r})-2\langle E'(u_{n,r}),u_{n,r}\rangle+4f'_r\left(\frac{\norm{u_{n,r}}_2^2}{\mu}\right)\frac{\norm{u_{n,r}}^2_{2}}{\mu} -2qf_r\left(\frac{ \norm{u_{n,r}}_2^2}{\mu}\right)}{q-2}\\
            &=\frac{2qE_{r,\mu}(u_{n,r})-2\langle E_{r,\mu}'(u_{n,r}),u_{n,r}\rangle}{q-2}\\
            &=\frac{2qc_r}{q-2}+o_n(1).
    \end{aligned}
    $$
This complete the proof of Lemma \ref{lembounded}.

\end{proof}
\begin{lemma}\label{lemlambdanr}
If $c_\infty:=\lim\limits_{r\to+\infty} c_r <+\infty$, then
$$
\lambda_{r}:= \limsup_{n \rightarrow+ \infty} \lambda_{n,r} < \infty
$$
and
$$
\limsup_{r \rightarrow+ \infty}\lambda_{r} \leq \frac{2c_\infty}{\mu}.
$$
\end{lemma}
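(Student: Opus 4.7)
Write $s_n := \|u_{n,r}\|_2^2/\mu \in [0,1)$, so that $\lambda_{n,r}\|u_{n,r}\|_2^2 = 2 s_n f'_r(s_n)$. The plan is to extract a scalar inequality involving only $f_r(s_n)$ and $h_r(s_n) = s_n f'_r(s_n) - f_r(s_n)$ from the Cerami relation. Subtracting $\langle E'_{r,\mu}(u_{n,r}),u_{n,r}\rangle$ from $2E_{r,\mu}(u_{n,r})$ yields
\[
2 E_{r,\mu}(u_{n,r}) - \langle E'_{r,\mu}(u_{n,r}),u_{n,r}\rangle \;=\; \int_\Omega\bigl[f(u_{n,r})u_{n,r}-2F(u_{n,r})\bigr]\,dx \;+\; \lambda_{n,r}\|u_{n,r}\|_2^2 \;-\; 2f_r(s_n).
\]
Under either ($f_2$) or ($f_4$), the nonlinear integrand is non-negative, and combining with \eqref{ceramisequence} and the boundedness of $\{u_{n,r}\}$ from Lemma~\ref{lembounded} gives
\[
2 s_n f'_r(s_n) - 2f_r(s_n) \;=\; 2h_r(s_n) \;\leq\; 2c_r + o_n(1).
\]
The inequality \eqref{eqfr's} sharpens $h_r(s) > (r-1)f_r(s)$, so
\[
f_r(s_n) \;\leq\; \frac{c_r}{r-1} + o_n(1). \tag{$\star$}
\]

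For the first claim, observe that ($\star$) prevents $s_n$ from clustering at $1$: if $s_{n_k}\to 1$ along a subsequence, then $f_r(s_{n_k}) = s_{n_k}^r/(1-s_{n_k}) \to +\infty$, contradicting the right-hand side of ($\star$) for fixed $r$. Hence $\bar s_r := \limsup_n s_n < 1$. Since $f'_r$ is continuous and, by \eqref{eqfr''s}, monotonically increasing on $[0,1)$, and since $\lambda_{n,r} = \tfrac{2}{\mu}f'_r(s_n)$, we can interchange $\limsup_n$ with the monotone continuous map $f'_r$ to conclude $\lambda_r = \tfrac{2}{\mu}f'_r(\bar s_r) < \infty$.

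For the second claim, pass to the limit along an $n$-subsequence realizing $\bar s_r$ in the bound $\lambda_{n,r} s_n \mu \leq 2c_r + 2f_r(s_n) + o_n(1)$ and use ($\star$) to obtain
\[
\lambda_r\,\bar s_r \;\leq\; \frac{2c_r}{\mu}\cdot\frac{r}{r-1}.
\]
Now choose $r_k\to+\infty$ with $\lambda_{r_k}\to\limsup_{r\to\infty}\lambda_r =: L$, and extract a further subsequence so that $\bar s_{r_k}\to\sigma\in[0,1]$. Passing $k\to\infty$ gives $L\sigma\leq 2c_\infty/\mu$. If $\sigma=1$ we are done; otherwise choose $s^\ast\in(\sigma,1)$ so that $\bar s_{r_k}\leq s^\ast$ eventually, and use monotonicity of $f'_r$ in $s$ together with the explicit formula
\[
f'_{r_k}(s^\ast) \;=\; \frac{r_k (s^\ast)^{r_k-1}}{1-s^\ast} + \frac{(s^\ast)^{r_k}}{(1-s^\ast)^2},
\]
in which exponential decay of $(s^\ast)^{r_k}$ beats the polynomial factor $r_k$, to conclude $\lambda_{r_k}\to 0$, so $L=0\leq 2c_\infty/\mu$.

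The delicate point is managing the two independent parameters $n\to\infty$ and $r\to\infty$. For each fixed $r$, monotonicity and continuity of $f_r$ and $f'_r$ let $\limsup_n$ slide through cleanly; but in the outer $r$-limit the factor $\bar s_r$ appearing on the left of the bound need not tend to $1$, and the case $\bar s_r \not\to 1$ must be handled separately by showing the perturbation gradient itself vanishes---this is precisely where the exponent $r$ in $f_r(s)=s^r/(1-s)$, rather than a less aggressive penalization, is essential.
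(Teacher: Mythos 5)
Your proof is correct, and while it rests on the same two pillars as the paper's --- the identity $2E_{r,\mu}(u_{n,r})-\langle E'_{r,\mu}(u_{n,r}),u_{n,r}\rangle=\int_\Omega\bigl[f(u_{n,r})u_{n,r}-2F(u_{n,r})\bigr]dx+2h_r(s_n)$ with the sign condition $f(t)t-2F(t)\geq 0$, and the structural inequality $h_r(s)\geq (r-1)f_r(s)$ from \eqref{eqfr's} --- it organizes the double limit in $n$ and $r$ differently. The paper introduces an auxiliary threshold $\xi_r\in(0,1)$, defined via the intermediate value theorem by $h_r(\xi_r)=c_\infty$, proves $\xi_r\to 1^-$, hence $f_r(\xi_r)\to 0$ and $f'_r(\xi_r)\to c_\infty$, and then converts $\limsup_n h_r(s_n)\leq c_r\leq c_\infty=h_r(\xi_r)$ into $\lambda_r\leq\tfrac{2}{\mu}f'_r(\xi_r)$ by strict monotonicity of $h_r$, after which the second claim is a single limit computation. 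You avoid the auxiliary point entirely: your bound $(\star)$ gives $\bar s_r<1$ directly (a cleaner route to the first claim), then the product estimate $\lambda_r\bar s_r\leq\tfrac{2c_r}{\mu}\cdot\tfrac{r}{r-1}$, and you close with a dichotomy on $\sigma=\lim_k\bar s_{r_k}$, the case $\sigma<1$ being eliminated by the explicit decay $f'_{r_k}(s^\ast)\to 0$ for fixed $s^\ast<1$. Both arguments are rigorous; the paper's buys a uniform quantitative comparison point with a clean limiting value $c_\infty$, whereas yours makes explicit where the aggressiveness of the penalization $s^r$ enters and, incidentally, needs only $c_\infty\geq 0$ rather than the strict positivity $c_\infty>0$ that makes the paper's $\xi_r$ well defined. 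Two minor points: the appeal to Lemma \ref{lembounded} is unnecessary, since the Cerami condition $(1+\|u_{n,r}\|)\|E'_{r,\mu}(u_{n,r})\|\to 0$ already yields $\langle E'_{r,\mu}(u_{n,r}),u_{n,r}\rangle=o_n(1)$ without boundedness; and in the step ``$L\sigma\leq 2c_\infty/\mu$'' you should observe that when $\sigma>0$ this inequality itself forces $L<\infty$ (if $\sigma=0$ it is vacuous, but then your second case applies anyway), so the conclusion is legitimate in all cases.
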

\begin{proof}
 First, by assumptions $r \mapsto c_r$ is non-decreasing and  $c_{r_0} >0$ for some $r_0>1$ sufficiently large, we have $c_\infty \geq c_{r_0} >0$. Since $f_{r}^{\prime}(0)=f_{r}(0)=0$ and $h_r(s)=f_{r}^{\prime}(s) s-f_{r}(s) \to +\infty$ as $s \to 1^{-}$, by continuity of $s \mapsto f_{r}^{\prime}(s) s-f_{r}(s)$, there exists $\xi_{r} \in (0,1)$ such that
$$
c_{\infty}=f_{r}^{\prime}(\xi_{r})\xi_{r}-f_{r}(\xi_{r}).
$$
We now claim that $\xi_{r}\to 1^{-}$ as $r \to +\infty$. Argue by contradiction, suppose that there exists a subsequence $\{r_{n}\}$ such that $r_{n} \nearrow +\infty$ (monotone sequences) and $\displaystyle \xi_{r_n}\to \xi$ as $n \to +\infty$ with $\xi \in [0,1)$. Then, by the definition of $f_{r}$,
$$c_{\infty}=f_{r_n}^{\prime}(\xi_{r_n})\xi_{r_n}-f_{r_n}(\xi_{r_n})=\frac{(r_n-1) \xi_{r_n}^{r_n}}{1-\xi_{r_n}}+\frac{\xi_{r_n}^{r_n+1}}{(1-\xi_{r_n})^{2}} \to 0\quad \text{as } n\to +\infty,$$
which is a contradiction. Then, $\xi_{r}\to 1^{-}$. Thus, by \eqref{eqfr's},
\begin{equation}\label{eqsupfr}
 f_{r}(\xi_{r}) \to 0\quad \text{and}\quad  f_{r}^{\prime}(\xi_{r}) \to c_{\infty}\quad \text{as } r\to +\infty.
\end{equation}
Thanks to \eqref{ceramisequence}, one has
$$
\begin{aligned}
 2 E_{r, \mu}(u_{n,r})+ o_{n}(1) &= 2 E_{r, \mu}(u_{n,r})- \langle E_{r,\mu}^{\prime}(u_{n,r}), u_{n,r}\rangle\\
& =\int_{\Omega}f(u_{n,r})u_{n,r}\,dx-2\int_{\Omega}F(u_{n,r})\,dx+2f'_r\left(\frac{\norm{u_{n,r}}_2^2}{\mu}\right)\frac{\norm{u_{n,r}}^2_{2}}{\mu}-2f_r\left(\frac{\norm{u_{n,r}}_2^2}{\mu}\right).
\end{aligned}
$$
As $f(t)t-2F(t) \geq 0$ for all $t \in \R$, we obtain
$$
\begin{aligned}
 \limsup_{n \to +\infty}\left[f'_r\left(\frac{\norm{u_{n,r}}_2^2}{\mu}\right)\frac{\norm{u_{n,r}}^2_{2}}{\mu}-f_r\left(\frac{\norm{u_{n,r}}_2^2}{\mu}\right)\right] &\leq \lim_{n \to +\infty} E_{r, \mu}(u_{n,r}) \\
& =c_{r} \leq c_{\infty}=f_{r}^{\prime}(\xi_{r}) \xi_{r}-f_{r}(\xi_{r}).
\end{aligned}
$$
Employing the fact that $s \mapsto f'_{r}(s)$ and  $s \mapsto f_{r}^{\prime}(s) s-f_{r}(s)$ are strictly increasing on $[0,1)$, we deduce that
$$
\limsup _{n \to +\infty} \lambda_{n,r}=\frac{2}{\mu} \limsup _{n \to +\infty} f'_r\left(\frac{\norm{u_{n,r}}_2^2}{\mu}\right) \leq \frac{2}{\mu} f_{r}^{\prime}\left(\xi_{r}\right),
$$
then by \eqref{eqsupfr}, one has
$$
\limsup _{r \to +\infty} \lambda_{r} \leq \lim_{r \to +\infty}\frac{2}{\mu} f_{r}^{\prime}\left(\xi_{r}\right) = \frac{2c_{\infty}}{\mu}.
$$
This completes the proof.
\end{proof}

\begin{lemma}\label{lemunr}
    If $c_\infty:=\lim\limits_{r\to+\infty} c_r < + \infty$, then for $r>1$ sufficiently large, there exists $u_r \in U_\mu$ such that, up to a subsequence, $u_{n,r} \to u_r$ in $H^1_0(\Omega)$, as $n \to +\infty$. Moreover, $u_r$ satisfies $$
E_{r, \mu}(u_{r})=c_{r} \text{ and } E_{r, \mu}^{\prime}(u_{r})=0
$$
with
$$
 \frac{2}{\mu}f'_r\left(\frac{\norm{u_r}_2^2}{\mu}\right) = \lambda_{r}  \text{ and } \limsup_{r \to +\infty}\lambda_{r} \leq \frac{2c_\infty}{\mu}.
$$
\end{lemma}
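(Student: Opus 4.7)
The plan is to combine the boundedness from Lemma \ref{lembounded} with the compact Sobolev embedding and the control on the Lagrange multiplier from Lemma \ref{lemlambdanr}. By Lemma \ref{lembounded}, $\{u_{n,r}\}_{n\geq 1}$ is bounded in $H_0^1(\Omega)$, so after extracting a subsequence there exists $u_r \in H_0^1(\Omega)$ with $u_{n,r} \rightharpoonup u_r$ weakly in $H_0^1(\Omega)$. Since the embedding $H_0^1(\Omega) \hookrightarrow L^p(\Omega)$ is compact for all $2 \leq p < 2^*$, we also have $u_{n,r} \to u_r$ strongly in $L^2(\Omega)$ and in $L^p(\Omega)$ for the exponent $p$ from assumption $(f_1)$, and pointwise a.e.\ along a further subsequence. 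Using Lemma \ref{lemlambdanr}, after one more extraction, $\lambda_{n,r} \to \lambda_r$ for some $\lambda_r \in [0,+\infty)$ with the asymptotic bound $\limsup_{r\to+\infty}\lambda_r \leq 2c_\infty/\mu$.

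The first main step is to show that $u_r$ lies in $U_\mu$, i.e.\ $\|u_r\|_2^2 < \mu$. Indeed, by strong $L^2$ convergence one has $\|u_{n,r}\|_2^2 \to \|u_r\|_2^2$, so setting $s_{n} := \|u_{n,r}\|_2^2/\mu$ we get $s_n \to \|u_r\|_2^2/\mu =: s^\star \in [0,1]$. If $s^\star = 1$, then by \eqref{eqfr's} we would have $f_r'(s_n) \to +\infty$, contradicting the boundedness of $\lambda_{n,r} = (2/\mu) f_r'(s_n)$ from Lemma \ref{lemlambdanr}. Hence $s^\star < 1$, i.e.\ $u_r \in U_\mu$, and moreover $\lambda_r = (2/\mu) f_r'(s^\star) = (2/\mu) f_r'(\|u_r\|_2^2/\mu)$ by continuity of $f_r'$ on $[0,1)$.

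Next I would promote weak to strong convergence in $H_0^1(\Omega)$ by testing $E_{r,\mu}'(u_{n,r})$ against $u_{n,r} - u_r$. Writing
\begin{equation*}
\langle E_{r,\mu}'(u_{n,r}), u_{n,r} - u_r\rangle = (u_{n,r}, u_{n,r} - u_r) - \int_\Omega f(u_{n,r})(u_{n,r} - u_r)\,dx - \lambda_{n,r}(u_{n,r}, u_{n,r} - u_r)_2,
\end{equation*}
the left-hand side tends to $0$ by the Cerami condition \eqref{ceramisequence}. The integral term tends to $0$ by $(f_1)$, strong $L^p$ convergence and the standard compactness of Nemytskii operators, and the last term tends to $0$ since $\lambda_{n,r}$ is bounded and $(u_{n,r}, u_{n,r}-u_r)_2 \to 0$ by strong $L^2$ convergence. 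Thus $(u_{n,r}, u_{n,r}-u_r) \to 0$, which combined with $(u_{n,r}, u_r) \to \|u_r\|^2$ yields $\|u_{n,r}\|^2 \to \|u_r\|^2$. Together with weak convergence in the Hilbert space $H_0^1(\Omega)$, this gives $u_{n,r} \to u_r$ strongly.

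Finally, strong convergence, continuity of $E_{r,\mu}$ and $E_{r,\mu}'$ on $U_\mu$ (recalling $u_r \in U_\mu$ is strictly interior) allow us to pass to the limit in \eqref{ceramisequence}, giving $E_{r,\mu}(u_r) = c_r$ and $E_{r,\mu}'(u_r) = 0$; the identity $(2/\mu) f_r'(\|u_r\|_2^2/\mu) = \lambda_r$ and the bound $\limsup_{r\to+\infty} \lambda_r \leq 2c_\infty/\mu$ are inherited from Lemma \ref{lemlambdanr}. The main obstacle is the first step, namely ruling out $\|u_r\|_2^2 = \mu$, because the penalization term $H_{r,\mu}$ blows up precisely on that boundary; here the boundedness of $\lambda_{n,r}$ provided by Lemma \ref{lemlambdanr} is exactly what prevents $\|u_{n,r}\|_2^2$ from approaching $\mu$.
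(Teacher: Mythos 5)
Your proposal is correct and follows essentially the same route as the paper: weak limit from Lemma \ref{lembounded}, ruling out $\norm{u_r}_2^2=\mu$, upgrading to strong $H_0^1(\Omega)$ convergence by testing $E_{r,\mu}'(u_{n,r})$ against $u_{n,r}-u_r$ with the compact embeddings, and then passing to the limit in \eqref{ceramisequence}. The only (harmless) variation is how the boundary case is excluded: you invoke the blow-up of $f_r'$ near $s=1$ together with the multiplier bound from Lemma \ref{lemlambdanr}, whereas the paper uses the blow-up of $f_r$ itself, which would force $E_{r,\mu}(u_{n,r})\to-\infty$ and contradict $E_{r,\mu}(u_{n,r})\to c_r$; both arguments rest on the penalization exploding as $\norm{u}_2^2\to\mu^-$.
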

\begin{proof}
    Let $r>1$ be sufficiently large. From Lemma \ref{lemlambdanr}, we may assume that $\lambda_{n,r} \to \lambda_{r}\leq \frac{2{c_\infty}}{\mu}$ as $n \to +\infty$. By Lemma \ref{lembounded}, $\{u_{n,r}\}_{n\geq 1}$ is bounded in $H^1_0(\Omega)$, and thus, up to a subsequence, $u_{n,r} \rightharpoonup u_{r}$ in $H^1_0(\Omega)$. Since $\{u_{n,r}\}_{n \geq 1} \subset U_\mu$, by Fatou's Lemma, we know that $\norm{u_{r}}_2^2\leq \mu$. We claim that $\norm{u_{r}}^2_2<\mu$. Argue by contradiction, suppose that $\norm{u_{r}}_2^2= \mu$. Then, $\displaystyle\lim_{n \to +\infty}\norm{u_{n,r}}_2^2= \mu$. Since $\{u_{n,r}\}_{n \geq 1}$ is bounded and $u_{n,r} \to u_r$ in $L^{2}(\Omega)$,  we obtain $E_{r,\mu}(u_{n,r}) \rightarrow -\infty$, which is absurd. Thus $u_r \in U_\mu$.

    Next we are going to prove that $u_{n,r} \to u_{r}$ in $H^1_0(\Omega)$. To this end, since the embedding $H^1_0(\Omega)\hookrightarrow L^p(\Omega)$ is compact for all $2\leq p<2^*$, it follows that $\langle E'(u_{r}), \cdot \rangle - \lambda_{r}(u_r,\cdot)_2=0$ and
\begin{equation*}
    \int_{\Omega} \left(f(u_{n,r})-f(u_{r})\right)(u_{n,r}-u_{r}) \, dx \to 0 \quad \text{ as }  n \to +\infty.
\end{equation*}
Since $\langle E_{r,\mu}'(u_{n,r}),u_{n,r}-u_r\rangle=o_n(1)$, the above information gives
\begin{equation*}
\begin{aligned}
    \norm{u_{n,r}-u_{r}}^{2}=&\langle E'(u_{n,r})-E'(u_r), u_{n,r}-u_r \rangle +\int_{\Omega} \left(f(u_{n,r})-f(u_{r})\right)(u_{n,r}-u_{r}) \, dx\\
    =&\langle E'_{r,\mu}(u_{n,r}), u_{n,r}-u_r \rangle -\langle E'(u_r), u_{n,r}-u_r \rangle+\lambda_{r}(u_r,u_{n,r}-u_r)_2 \\&+(\lambda_{n,r}  u_{n,r}-\lambda_{r} u_{r}, u_{n,r}-u_{r})_{2}+\int_{\Omega} \left(f(u_{n,r})-f(u_{r})\right)(u_{n,r}-u_{r}) \, dx\\
    =&\int_{\Omega} \left(f(u_{n,r})-f(u_{r})\right)(u_{n,r}-u_{r}) \, dx+(\lambda_{n,r}  u_{n,r}-\lambda_{r} u_{r}, u_{n,r}-u_{r})_{2}+o_{n}(1)\\
    =&(\lambda_{n,r}  u_{n,r}-\lambda_{r} u_{r}, u_{n,r}-u_{r})_{2}+o_{n}(1).
\end{aligned}
\end{equation*}
Consequently, by employing the compact embedding $H^1_0(\Omega)\hookrightarrow L^2(\Omega)$, we derive that
\begin{equation*}
   \begin{aligned}
   \norm{u_{n,r}-u_{r}}^{2}&=
(\lambda_{n,r}  u_{n,r}-\lambda_{r} u_{r}, u_{n,r}-u_{r})_{2}+o_{n}(1)\\
&=(\lambda_{n,r}-\lambda_{r})(u_{n,r}, u_{n,r}-u_{r})_{2}+\lambda_{r}((u_{n,r}-u_{r}), u_{n,r}-u_{r})_{2}+o_{n}(1)\\
&=\lambda_{r}\norm{u_{n,r}-u_{r}}_{2}^{2}+o_{n}(1)\\
&=o_{n}(1),
\end{aligned}
\end{equation*}
that is, $u_{n,r}\to u_{r}$ in $H^1_0(\Omega)$. Thus,
$$
E_{r,\mu}(u_{r}) = \lim\limits_{n \to +\infty}E_{r,\mu}(u_{n,r}) =c_r\,\,\text{and}\,\, E_{r,\mu}^\prime(u_{r}) = \lim\limits_{n \to +\infty}E_{r,\mu}^\prime(u_{n,r}) =0
$$
and
$$
\lambda_r = \lim_{n \to +\infty}\lambda_{n,r}=\lim_{n \to +\infty}\frac{2}{\mu}f'_r\left(\frac{\norm{u_{n,r}}_2^2}{\mu}\right) = \frac{2}{\mu}f'_r\left(\frac{\norm{u_r}_2^2}{\mu}\right).
$$
This completes the proof.
\end{proof}
Let $r_{n}>1$ be such that $r_{n} \nearrow +\infty$ (monotone sequences) and
$$
S_{1,0,0}(\mu)=\left\{u \in H_0^{1}(\Omega)\,:\,\int_{\Omega}u^2\,dx=\mu\right\}.
$$
\begin{lemma}\label{lemur}
     If  $c_\infty:=\lim\limits_{r\to+\infty} c_r <+\infty$, then, there exists $u \in H^1_0(\Omega)$ such that, up to a subsequence, $u_{r_n} \to u$ in $H^1_0(\Omega)$, as $n \to +\infty$. Moreover, $u$ satisfies $E(u)=c_\infty$, and
     \begin{enumerate}[label=\rm(\roman*)]
        \item either $u$ is a critical point of $E$ constrained on $S_{1,0,0}(\mu)$ with Lagrange multiplier $\lambda \in [0,\frac{2c_\infty}{\mu}]$.
\item  or $u$ is a critical point of $E$ constrained on $S_{1,0,0}(\nu)$ for some $0<\nu < \mu$ with Lagrange multiplier $\lambda =0$.
    \end{enumerate}
\end{lemma}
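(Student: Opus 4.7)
The plan is to extract a convergent subsequence from the family of critical points $\{u_{r_n}\}_{n \geq 1}$ delivered by Lemma \ref{lemunr} (applied with $r = r_n$) and then identify the equation satisfied by the limit. First I would establish that $\{u_{r_n}\}_{n \geq 1}$ is bounded in $H^1_0(\Omega)$, with an argument that mirrors Lemma \ref{lembounded} but uses the uniform bound $c_{r_n}\le c_\infty$ coming from the monotonicity of $r\mapsto c_r$. Under $(f_2)$-$(f_3)$, I would pick $t_n\in [0,1]$ maximizing $t \mapsto E_{r_n,\mu}(t u_{r_n})$; the monotonicity of $s \mapsto f(s)s - 2F(s)$ combined with the monotonicity of $h_{r_n}$ from \eqref{eqhrs} yields $2E_{r_n,\mu}(t_n u_{r_n}) \leq 2E_{r_n,\mu}(u_{r_n}) = 2c_{r_n} \leq 2c_\infty$, after which the blow-up via $v_n = u_{r_n}/\|u_{r_n}\|$ together with Fatou and $(f_3)$ rules out $\|u_{r_n}\| \to +\infty$. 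Under $(f_1)$ and $(f_4)$ with $r_n > q/2$, the inequality $(q-2)\|u_{r_n}\|^2 \leq 2qc_{r_n}$ derived from \eqref{eqfr's} at the end of Lemma \ref{lembounded} suffices. By Lemma \ref{lemlambdanr} together with $\lambda_{r_n}\ge 0$, I may also pass to a subsequence along which $\lambda_{r_n} \to \lambda \in [0, 2c_\infty/\mu]$.

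Next, up to a further subsequence, $u_{r_n} \rightharpoonup u$ in $H^1_0(\Omega)$ and $u_{r_n} \to u$ in $L^p(\Omega)$ for every $p \in [2, 2^*)$ by the compactness of the Sobolev embeddings on the bounded domain. Passing to the limit in the weak form of $-\Delta u_{r_n} = \lambda_{r_n} u_{r_n} + f(u_{r_n})$ (which is simply $E'_{r_n,\mu}(u_{r_n}) = 0$) gives $-\Delta u = \lambda u + f(u)$ in $H^{-1}(\Omega)$. Testing with $\phi = u_{r_n} - u$ and reproducing verbatim the computation at the end of the proof of Lemma \ref{lemunr}, the terms $(\lambda_{r_n}u_{r_n} - \lambda u, u_{r_n} - u)_2$ and $\int_\Omega [f(u_{r_n}) - f(u)](u_{r_n} - u)\,dx$ both tend to zero by $L^2$- and $L^p$-convergence respectively, which delivers strong convergence $u_{r_n} \to u$ in $H^1_0(\Omega)$.

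It remains to compute $E(u)$ and to identify the Lagrange multiplier. Strong convergence yields $E(u_{r_n}) \to E(u)$, and since $E_{r_n,\mu}(u_{r_n}) = E(u_{r_n}) - f_{r_n}(s_n) = c_{r_n}$ with $s_n := \|u_{r_n}\|_2^2/\mu$, the limit $L := \lim_n f_{r_n}(s_n) = E(u) - c_\infty \geq 0$ exists. If $\nu := \|u\|_2^2 < \mu$, then $s_n \leq s_\ast$ for some $s_\ast<1$ and all large $n$, and a direct inspection of the formulas for $f_{r_n}$ and $f'_{r_n}$ shows that both quantities converge to $0$ uniformly on $[0, s_\ast]$ as $r_n \to +\infty$; therefore $L = 0$, $\lambda = 0$, and $u$ solves $-\Delta u = f(u)$. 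The assumption $c_\infty > 0$ (coming from $c_r>0$ and $r\mapsto c_r$ non-decreasing) forces $u \not\equiv 0$, so $\nu > 0$ and case (ii) holds. If $\|u\|_2^2 = \mu$, then $s_n \to 1^-$; the main delicate point, and the one I expect to require the most care, is to rule out $L > 0$. I would use the elementary lower bound $f'_{r_n}(s_n) \geq s_n^{r_n}/(1 - s_n)^2 = f_{r_n}(s_n)/(1-s_n)$: the hypothesis $L > 0$ combined with $s_n \to 1^-$ would force $\lambda_{r_n} = (2/\mu)f'_{r_n}(s_n) \to +\infty$, contradicting $\lambda_{r_n} \to \lambda < +\infty$. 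Hence $L = 0$, $E(u) = c_\infty$, $u \in S_{1,0,0}(\mu)$ and $u$ solves $-\Delta u = \lambda u + f(u)$ with $\lambda \in [0, 2c_\infty/\mu]$, which is case (i).
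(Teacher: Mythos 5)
Your proposal is correct and follows essentially the same route as the paper: extract the critical points $u_{r_n}$ from Lemma \ref{lemunr}, repeat the arguments of Lemmas \ref{lembounded} and \ref{lemunr} (with the uniform bound $c_{r_n}\le c_\infty$) to obtain strong convergence and $\lambda_{r_n}\to\lambda\in[0,\tfrac{2c_\infty}{\mu}]$, show that the penalization term vanishes so that $E(u)=c_\infty$, and use the monotonicity of $f'_{r_n}$ to force $\lambda=0$ when $\|u\|_2^2<\mu$. The only (harmless) difference is how you show $f_{r_n}\bigl(\|u_{r_n}\|_2^2/\mu\bigr)\to 0$: the paper gets it in one stroke from \eqref{eqfr's}, namely $f_{r_n}(s_n)<\tfrac{s_n}{r_n}f'_{r_n}(s_n)=\tfrac{s_n\mu\lambda_{r_n}}{2r_n}\to 0$ by boundedness of the multipliers, whereas you argue by cases, using the bound $f'_{r_n}(s_n)\ge f_{r_n}(s_n)/(1-s_n)$ when $s_n\to 1^-$; both are valid, and your explicit remark that $c_\infty>0$ guarantees $\nu>0$ in case (ii) is a detail the paper leaves implicit.
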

\begin{proof}
    Recall that $\{u_{r_n}\}_{n\geq 1}$ in Lemma \ref{lemunr} is a sequence such that
$$
E_{r_{n}, \mu}(u_{r_{n}})=c_{r} \text{ and } E_{r_{n}, \mu}^{\prime}(u_{r_{n}})=0
$$
with
$$
 \frac{2}{\mu}f'_{r_n}\left(\frac{\norm{u_{r_n}}_2^2}{\mu}\right) = \lambda_{r_n}  \text{ and } \limsup_{n \to +\infty}\lambda_{r_n} \leq \frac{2c_\infty}{\mu}.
$$
Repeating the arguments of Lemmas \ref{lembounded} and \ref{lemunr}, we may assume that $u_{r_n} \to u$ in $H^1_0(\Omega)$ and $\lambda_{r_n} \rightarrow \lambda$ as $n \to +\infty$. By \eqref{eqfr's}, we know that $f_{r_n}\left(\frac{\norm{u_{r_n}}_2^2}{\mu}\right) \to 0$ as $n \to +\infty$. Hence,
$E(u) = c_\infty$, $ \langle  E^{\prime}(u), \cdot\rangle-\lambda(u,\cdot)_2 = 0$, $\norm{u}^2_{2} \leq \mu$, $0 \leq \lambda \leq \frac{2c_\infty}{\mu}$.
Now, either $\norm{u}^2_{2}=\mu$ or $\norm{u}_{2}^2<\mu$. When the latter happens,
$$
0 \leq \lambda=\lim _{n \to +\infty}\frac{2}{\mu}f'_{r_n}\left(\frac{\norm{u_{r_n}}_2^2}{\mu}\right) \leq \limsup _{n \to +\infty} \frac{2}{\mu}f_{r_{n}}^{\prime}\left(\frac{\mu+\norm{u_{0}}_{2}^{2}}{2\mu}\right)=0,
$$
that is $\lambda=0$. This completes the proof.
\end{proof}

Next, we present a key lemma concerning a non-existence result, which will be used to rule out case (ii) in Lemma \ref{lemur}. For $M>0$ and $q >2$, define $$\mu^*:=\mu^*(K_2,K_p,\Omega,M,p,q):=
\begin{cases}
 \left(\frac{\lambda_{1,0,0}-K_2}{K_pC_{p,N}}\right)^{\frac{2}{p-2}}\lambda_{1,0,0}^{-\frac{N}{2}}  &\text{ if } 2<p\leq 2+\frac{4}{N},\\
    \left(\frac{\lambda_{1,0,0}-K_2}{K_pC_{p,N}\lambda_{1,0,0}}\right)^{\frac{2}{p-2}}\left(\frac{2qM}{q-2}\right)^{\frac{2}{p-2}-\frac{N}{2}} &\text{ if }  2+\frac{4}{N}<p<2^*.
\end{cases}$$

\begin{lemma}\label{lemnex}
The following two non-existence results hold:
    \begin{enumerate}[label=\rm(\roman*)]
        \item Assume that ($f_1$)-($f_3$) hold and that $\mu \in (0,1)$. Then, given $M>0$ independent of $\mu \in (0,1)$ and $r>0$, there exists a constant $\mu_0:=\mu_0(\Omega,f,M) \in (0,1)$ such that, for any $0<\nu<\mu_0$, there exists no $u \in H^1_0(\Omega)$ satisfying
             $$
             E'(u) = 0,\,\,E(u)\leq M,\,\,\text{and}\,\,\int_{\Omega}|u|^2\,dx=\nu.
             $$
        \item Assume that ($f_1$) and ($f_4$) hold. Then, for any $M>0$  independent of $\mu >0$ and $0<\nu<\mu^*$, there exists no $u \in H_0^1(\Omega)$ satisfying
             $$
             E'(u) = 0,\,\,E(u)\leq M\mu^*\,\,\text{and}\,\,\int_{\Omega}|u|^2 \,dx=\nu.
             $$
    \end{enumerate}
\end{lemma}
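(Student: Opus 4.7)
My plan is to test the critical point identity $E'(u)=0$ against $u$ to obtain $\|\nabla u\|_2^2 = \int_\Omega f(u)u\,dx$, then combine $(f_1)$, the Poincar\'e inequality $\|u\|_2^2 \leq \lambda_{1,0,0}^{-1}\|\nabla u\|_2^2$, and the Gagliardo-Nirenberg inequality \eqref{eqgn} to obtain the master inequality
$$
\frac{\lambda_{1,0,0}-K_2}{\lambda_{1,0,0}}\,\|\nabla u\|_2^2 \;\leq\; K_p C_{p,N}\,\nu^{(1-\beta_p)p/2}\,\|\nabla u\|_2^{\beta_p p}, \qquad (\ast)
$$
where $\beta_p = N(\tfrac12-\tfrac1p)$ and $\beta_p p = N(p-2)/2$. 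This is the common starting point for both parts.

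In the $L^2$-subcritical/critical regime $2<p\leq 2+\tfrac{4}{N}$ one has $\beta_p p - 2 \leq 0$; Poincar\'e then gives $\|\nabla u\|_2^{\beta_p p - 2}\leq (\lambda_{1,0,0}\nu)^{(\beta_p p - 2)/2}$, and plugging this into $(\ast)$ together with $\beta_p p/(p-2)=N/2$ immediately yields
$$
\nu \,\geq\, \left(\frac{\lambda_{1,0,0}-K_2}{K_p C_{p,N}}\right)^{\!2/(p-2)}\!\lambda_{1,0,0}^{-N/2},
$$
which is exactly the right-hand side defining $\mu^*$ in this range and also gives a $\mu_0$ for part (i). The energy hypothesis is not used here.

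In the $L^2$-supercritical regime $p>2+\tfrac{4}{N}$, part (ii) follows from $(f_4)$: the standard Ambrosetti-Rabinowitz computation at a critical point gives $E(u)\geq \tfrac{q-2}{2q}\|\nabla u\|_2^2$, hence $\|\nabla u\|_2^2 \leq \tfrac{2qM\mu^*}{q-2}$. Isolating $\|\nabla u\|_2^{\beta_p p - 2}$ in $(\ast)$ (now with positive exponent) and inserting the upper bound gives
$$
\nu^{(1-\beta_p)p/2} \,\geq\, \frac{\lambda_{1,0,0}-K_2}{\lambda_{1,0,0} K_p C_{p,N}}\left(\frac{q-2}{2qM\mu^*}\right)^{\!(\beta_p p - 2)/2}.
$$
Using the definition of $\mu^*$ and the identity $(1-\beta_p)p+(\beta_p p - 2)=p-2$, a direct exponent bookkeeping shows that the right-hand side equals $(\mu^*)^{(1-\beta_p)p/2}$; hence $\nu\geq \mu^*$, contradicting $\nu<\mu^*$.

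Part (i) in the $L^2$-supercritical range is the most delicate case, since $(f_4)$ is unavailable to bound $\|\nabla u\|_2$ from above. I argue by contradiction and a blow-up analysis: suppose $u_n\in H_0^1(\Omega)$ with $E'(u_n)=0$, $E(u_n)\leq M$, $\nu_n\to 0$. From $(\ast)$ with $\beta_p p > 2$ one has $\|\nabla u_n\|_2\to\infty$. If $\{\|u_n\|_\infty\}$ stays bounded, then $(f_1)$ makes the right-hand side of $-\Delta u_n=f(u_n)$ uniformly bounded, so elliptic regularity gives $C^{1,\alpha}$-precompactness of $\{u_n\}$; a $C^1$-convergent subsequence combined with $\nu_n\to 0$ forces the limit to be $0$, contradicting $\|\nabla u_n\|_2\to\infty$. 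If instead $M_n := \|u_n\|_\infty\to\infty$, choose $x_n\in\overline\Omega$ with $|u_n(x_n)|=M_n$ and rescale
$$
v_n(y) := M_n^{-1} u_n\!\left(x_n + M_n^{-(p-2)/2} y\right), \qquad \Omega_n := M_n^{(p-2)/2}(\Omega - x_n),
$$
so that $|v_n|\leq 1$, $|v_n(0)|=1$, and $-\Delta v_n = M_n^{-(p-1)} f(M_n v_n)$ in $\Omega_n$. By $(f_1)$ the rescaled nonlinearities are uniformly bounded on $[-1,1]$, while $(f_2)$-$(f_3)$ guarantee that any subsequential limit $\tilde f_\infty$ is nontrivial with $L^2$-supercritical power growth. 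Extracting a $C^1_{\rm loc}$-convergent subsequence yields a nontrivial bounded weak solution $v$ of $-\Delta v = \tilde f_\infty(v)$ on $\mathbb{R}^N$ (if $x_n$ stays away from $\partial\Omega$) or on a half-space (otherwise); the Gidas-Spruck Liouville theorem, respectively the Berestycki-Caffarelli-Nirenberg theorem, then forces $v\equiv 0$, contradicting $|v(0)|=1$. The main obstacle is precisely this blow-up step: the absence of scale-invariance of $f$ requires careful extraction of the limit nonlinearity (using $\limsup_{|t|\to\infty}|f(t)|/|t|^{p-1}$ to pick the right subsequence) and a separate treatment of the half-space case via the appropriate Liouville result.
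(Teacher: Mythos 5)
Your master inequality $(\ast)$, your treatment of the range $2<p\leq 2+\frac{4}{N}$, and your proof of part (ii) via the Ambrosetti--Rabinowitz bound $\norm{u}^2\leq \frac{2qM\mu^*}{q-2}$ are correct and coincide with the paper's own computations. The genuine problem is part (i) in the supercritical range $p>2+\frac{4}{N}$, which you handle by blow-up and Liouville theorems; this is exactly the step the paper warns about (blow-up analysis ``does not apply in our case'' for general $f\in C(\R,\R)$), and your argument has concrete gaps there.

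First, the rescaled nonlinearities $g_n(t)=M_n^{-(p-1)}f(M_nt)$ need not converge to anything: ($f_1$) gives only a uniform bound on $[-1,1]$, not equicontinuity, so no Arzel\`a--Ascoli extraction is available for a merely continuous $f$, and weak-$*$ convergence of $g_n$ does not allow passage to the limit in the composition $g_n(v_n)$. Second, even when the limit exists it can vanish identically: take $f(t)=\abs{t}^{p'-2}t$ with $2+\frac{4}{N}<p'<p<2^*$, which satisfies ($f_1$)--($f_3$) with nominal exponent $p$; then $g_n(t)=M_n^{p'-p}\abs{t}^{p'-2}t\to 0$ uniformly on $[-1,1]$, the blow-up limit solves $-\Delta v=0$ with $\abs{v}\leq 1$ and $\abs{v(0)}=1$, and in the interior case $v\equiv\pm1$ is a perfectly admissible bounded harmonic function, so no contradiction results. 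Your scaling is tied to the exponent $p$ appearing in the growth bound, which for general $f$ has nothing to do with the actual behaviour of $f$ at infinity, and ($f_2$)--($f_3$) do not repair this. Third, the critical points here are in no way positive, whereas Gidas--Spruck and Berestycki--Caffarelli--Nirenberg are Liouville theorems for \emph{positive} solutions; for bounded sign-changing solutions of $-\Delta v=\abs{v}^{p-2}v$ on $\R^N$ there is no Liouville theorem (one-dimensional periodic solutions, extended as functions of one variable, are counterexamples). So even under the extra hypothesis that $f$ is asymptotically a pure power, the final step of your argument would not close.

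The paper's route for this case is far more elementary and is the key point you are missing: any critical point of $E$ with $E(u)\leq M$ satisfies a uniform a priori bound $\norm{u}\leq R=R(M,f,\Omega)$, obtained by repeating the argument of Lemma \ref{lembounded} (with the penalization terms absent). Indeed, if $E'(u_n)=0$, $E(u_n)\leq M$ and $\norm{u_n}\to\infty$, pick $t_n\in[0,1]$ maximizing $t\mapsto E(tu_n)$; condition ($f_2$) (monotonicity of $s\mapsto f(s)s-2F(s)$) gives $2E(t_nu_n)\leq 2E(u_n)-\langle E'(u_n),u_n\rangle\leq 2M$, while ($f_3$), Fatou's lemma and the inequality $E(u_n)=\int_\Omega\left(\frac{1}{2}f(u_n)u_n-F(u_n)\right)dx\geq 0$ force the weak limit of $v_n=u_n/\norm{u_n}$ to vanish; the compact embedding then yields $\liminf_n E(t_nu_n)\geq \frac{B^2}{2}$ for every $B>0$, a contradiction. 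Plugging $\norm{u}\leq R$ into your $(\ast)$ gives $\frac{\lambda_{1,0,0}-K_2}{\lambda_{1,0,0}}\leq K_pC_{p,N}R^{\beta_pp-2}\nu^{(1-\beta_p)p/2}$, which bounds $\nu$ away from zero and finishes part (i) using only ($f_2$), ($f_3$) and the compactness of $H^1_0(\Omega)\hookrightarrow L^p(\Omega)$ --- no elliptic regularity and no Liouville theorems.
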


\begin{proof}
    (i) Suppose that ($f_1$)-($f_3$) hold and that $\mu \in (0,1)$. For any fixed $M>0$, we claim that, there exists $R>0$ independent of $\mu \in (0,1)$ and $r>1$, such that for all  $u \in H_0^1(\Omega)$ satisfying $E'(u) = 0$ and $E(u)\leq M$, we have $\norm{u}\leq R$. Indeed, if not, then there exists a sequence $\{u_n\}\subset H_0^1(\Omega)$ satisfying $E'(u_n) = 0$, $E(u_n)\leq M$
    and $\norm{u_n} \to +\infty$ as $n \to +\infty$. Repeating the arguments of Lemma \ref{lembounded}, we obtain a contradiction.

    Now, argue by contradiction, assume that for some $\nu>0$, there exists $u \in H_0^1(\Omega)$ satisfying $E'(u) = 0$, $E(u)\leq M$ and $\int_{\Omega}|u|^2\,dx=\nu$. By $E'(u) = 0$, ($f_1$) and \eqref{eqgn}, one has
    \begin{equation*}\label{lemnonex}
        \begin{aligned}
        \norm{u}^2 = \int_\Omega f(u)u\,dx &\leq K_2 \norm{u}_2^2+K_{p} C_{p,N}\norm{u}^{\beta_{p}{p}}\norm{u}_2^{(1-\beta_{p}){p}} \\
        &\leq \frac{K_2}{\lambda_{1,0,0}} \norm{u}^2+K_{p} C_{p,N}\norm{u}^{\beta_{p}{p}}\norm{u}_2^{(1-\beta_{p}){p}}.
        \end{aligned}
    \end{equation*}
    Hence,
    \begin{equation*}
    \frac{\lambda_{1,0,0}-K_2}{\lambda_{1,0,0}}\leq K_{p} C_{p,N}\norm{u}^{\beta_{p}{p}-2}\nu^\frac{(1-\beta_{p}){p}}{2}.
    \end{equation*}
    If $p \leq 2+\frac{4}{N}$, then $\beta_pp\leq 2$ and
    $$
    \frac{\lambda_{1,0,0}-K_2}{\lambda_{1,0,0}}\leq K_{p} C_{p,N}\lambda_{1,0,0}^\frac{{\beta_{p}{p}-2}}{2}\nu^\frac{{\beta_{p}{p}-2}}{2} \nu^\frac{(1-\beta_{p}){p}}{2}=K_{p} C_{p,N} \lambda_{1,0,0}^\frac{{\beta_{p}{p}-2}}{2}\nu^\frac{p-2}{2}.
    $$
     If $p > 2+\frac{4}{N}$, then $\beta_pp > 2$ and
    $$
    \frac{\lambda_{1,0,0}-K_2}{\lambda_{1,0,0}}\leq K_{p} C_{p,N}R^{\beta_{p}{p}-2} \nu^\frac{(1-\beta_{p}){p}}{2}.
    $$
    Therefore, there exists $\mu_0>0$ such that, for any $0<\nu<\mu_0$, there exists no $u \in H_0^1(\Omega)$ satisfying
     $$
     E'(u) = 0,\,\,E(u)\leq M\,\,\text{and}\,\,\int_{\Omega}|u|^2 \,dx=\nu.
     $$

    (ii) Suppose that ($f_1$) and ($f_4$) hold. Given $M>0$, argue by contradiction, assume that there exists $u \in H_0^1(\Omega)$ satisfying $E'(u) = 0$, $E(u)\leq M\mu^*$ and $\int_{\Omega}|u|^2\,dx=\nu$ for some $0<\nu<\mu^*$. Then, by ($f_4$), we have
    $$
    \norm{u}^2 \leq \frac{2qE(u)-2\langle E'(u),u\rangle}{q-2}\leq \frac{2qM\mu^*}{q-2}.
    $$
    Repeating the previous argument, we derive that, if $2<p\leq2+\frac{4}{N}$, then
    $$
    \frac{\lambda_{1,0,0}-K_2}{\lambda_{1,0,0}}\leq K_{p} C_{p,N} \lambda_{1,0,0}^\frac{{\beta_{p}{p}-2}}{2}\nu^\frac{p-2}{2},
    $$
    and, if $2+\frac{4}{N}<p<2^*$, then
    $$
    \begin{aligned}
        \frac{\lambda_{1,0,0}-K_2}{\lambda_{1,0,0}}&\leq K_{p} C_{p,N}\norm{u}^{\beta_{p}{p}-2}\nu^\frac{(1-\beta_{p}){p}}{2}\\
        &\leq K_{p} C_{p,N}\left(\frac{2qM\mu^*}{q-2}\right)^\frac{{\beta_{p}{p}-2}}{2}{\nu}^\frac{(1-\beta_p) p}{2}\\
        &<K_{p} C_{p,N}\left(\frac{2qM}{q-2}\right)^\frac{{\beta_{p}{p}-2}}{2}{\mu^*}^\frac{p-2}{2},
    \end{aligned}
    $$
    which leads to a contradiction, since $\frac{2-\beta_pp}{p-2}=\frac{2}{p-2}-\frac{N}{2}$ and
 $$\nu < \mu^*=
\begin{cases}
    \left(\frac{\lambda_{1,0,0}-K_2}{K_pC_{p,N}}\right)^{\frac{2}{p-2}}\lambda_{1,0,0}^{-\frac{N}{2}}  &\text{ if } 2<p\leq 2+\frac{4}{N},\\
    \left(\frac{\lambda_{1,0,0}-K_2}{K_pC_{p,N}\lambda_{1,0,0}}\right)^{\frac{2}{p-2}}\left(\frac{2qM}{q-2}\right)^{\frac{2}{p-2}-\frac{N}{2}} &\text{ if }  2+\frac{4}{N}<p<2^*.
\end{cases}$$
\end{proof}
\subsection{Proof of Theorems \ref{th2} and \ref{th3}}\label{secp23}
The following lemma shows that the functional $E_{r, \mu}$ possesses the mountain pass geometry, thus we can apply Theorem \ref{thmpopen} in appendix.
\begin{lemma}\label{lemmpg}
The functional $E_{r, \mu}$ possesses the following properties:
  \begin{enumerate}[label=\rm(\roman*)]
\item There exist $\alpha, \rho>0$ such that $E_{r, \mu}(u) \geq \alpha$ for any $u \in  U_{\mu}$ with $\|u\|=\rho$;
\item There exists  $e \in U_{\mu}$ with $\|e\|>\rho$ such that $E_{r, \mu}(e)<0$.
\end{enumerate}
\end{lemma}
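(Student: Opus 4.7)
The plan is to establish (i) via a standard ``quadratic wins'' estimate on a small sphere, using the spectral gap $K_2<\lambda_{1,0,0}$ from $(f_1)$ together with Poincar\'e's inequality; the penalization $H_{r,\mu}$ is the composition of an increasing function with a bounded ratio involving $\mu$, so it will be controlled by keeping $\|u\|_2^2/\mu$ bounded away from $1$. For (ii) I will exploit the singularity of $f_r$ at $s=1^-$: sliding along a ray through a nonzero $\phi\in H_0^1(\Omega)$ toward $\partial U_\mu$ drives $E_{r,\mu}$ to $-\infty$, while $F\geq 0$ prevents the nonlinear term from helping.

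For (i), $(f_1)$ gives $|F(t)|\leq \tfrac{K_2}{2}t^2+\tfrac{K_p}{p}|t|^p$. Combining Poincar\'e's inequality $\|u\|_2^2\leq \|u\|^2/\lambda_{1,0,0}$ with the Gagliardo--Nirenberg estimate \eqref{eqgn} produces a bound of the form
\[
E_{r,\mu}(u)\;\geq\;\tfrac{1}{2}\!\Bigl(1-\tfrac{K_2}{\lambda_{1,0,0}}\Bigr)\|u\|^2-C\|u\|^p-f_r\!\Bigl(\tfrac{\|u\|_2^2}{\mu}\Bigr),
\]
with $C$ depending only on $K_p, C_{p,N},\lambda_{1,0,0},p$. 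I will choose $\rho>0$ with $\rho^2\leq \lambda_{1,0,0}\mu/2$, so that the sphere $\{\|u\|=\rho\}$ lies inside $U_\mu$ and $\|u\|_2^2/\mu\leq 1/2$ on it, and also small enough that $C\rho^{p-2}$ is a small fraction of $1-K_2/\lambda_{1,0,0}$; this is possible since $p>2$. On the sphere, $H_{r,\mu}(u)\leq f_r(1/2)=2^{1-r}\to 0$ as $r\to\infty$, so for $r$ sufficiently large the penalization absorbs at most a small fraction of $\rho^2$, leaving $E_{r,\mu}(u)\geq \alpha$ for some $\alpha>0$.

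For (ii), take the first Dirichlet eigenfunction $\phi_1$ normalized by $\|\phi_1\|_2=1$, so $\|\phi_1\|^2=\lambda_{1,0,0}$, and set $e_t:=t\phi_1$ for $t\in(0,\sqrt{\mu})$. Then $\|e_t\|_2^2/\mu=t^2/\mu\in(0,1)$ and
\[
E_{r,\mu}(e_t)\;=\;\tfrac{\lambda_{1,0,0}\,t^2}{2}-\!\int_\Omega F(t\phi_1)\,dx-f_r\!\Bigl(\tfrac{t^2}{\mu}\Bigr)\;\leq\;\tfrac{\lambda_{1,0,0}\,t^2}{2}-f_r\!\Bigl(\tfrac{t^2}{\mu}\Bigr),
\]
since $F\geq 0$: under $(f_2)$ this is immediate from $f(s)s\geq 0$, and under $(f_4)$ it follows from $qF(t)\leq f(t)t$ integrated from $0$. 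As $t\to\sqrt{\mu}^-$ the penalization $f_r(t^2/\mu)\to+\infty$ while the quadratic term stays bounded, so $E_{r,\mu}(e_t)\to-\infty$; simultaneously $\|e_t\|^2\to\lambda_{1,0,0}\mu\geq 2\rho^2$, so picking $t$ close enough to $\sqrt{\mu}^-$ yields $e:=e_t\in U_\mu$ with $\|e\|>\rho$ and $E_{r,\mu}(e)<0$. The only delicate point is arranging $H_{r,\mu}$ to be negligible in (i) and yet to be the dominant negative contribution in (ii); this is achieved by confining $\|u\|_2^2/\mu$ to $[0,1/2]$ on the small sphere but letting it approach $1$ along the ray in the second step, so that the bounded domain $U_\mu$ produces the mountain pass geometry via the singularity of $f_r$ rather than via superquadratic growth of $F$ at infinity.
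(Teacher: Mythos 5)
Your proposal follows essentially the same route as the paper: part (i) is the standard small-sphere estimate using $(f_1)$, Poincar\'e's inequality and \eqref{eqgn}, and part (ii) slides along a ray inside $U_\mu$ toward the constraint sphere $\|u\|_2^2=\mu$ so that the singularity of $f_r$ at $s=1^-$ drives $E_{r,\mu}$ to $-\infty$; your part (ii) matches the paper's almost verbatim (the paper does not even need $F\geq 0$ there, since $\Psi(tu_0)$ stays bounded as $t\to 1^-$ by continuity, but your use of $F\geq 0$ is a harmless shortcut).

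The one substantive difference is in (i), and it changes the quantifiers. You fix $\rho$ independently of $r$ (so that $\|u\|_2^2/\mu\leq 1/2$ on the sphere), bound the penalization by $f_r(1/2)=2^{1-r}$, and then invoke $r\to\infty$ to make this negligible; hence your mountain pass geometry, and with it the positivity of $c_r$, is obtained only for $r\geq r_0(\rho)$ large. The paper instead keeps $r>1$ arbitrary and shrinks $\rho=\rho(r)$: on the sphere $\|u\|=\rho$ one has, by monotonicity of $f_r$,
$$
H_{r,\mu}(u)\;\leq\; f_r\!\left(\frac{\rho^2}{\mu\lambda_{1,0,0}}\right)\;=\;\rho^{2}\,\frac{(\mu\lambda_{1,0,0})^{-r}\rho^{2r-2}}{1-(\mu\lambda_{1,0,0})^{-1}\rho^{2}},
$$
and since $r>1$ the factor $\rho^{2r-2}$ vanishes as $\rho\to 0$, so the penalization is $o(\rho^2)$ and the geometry holds for \emph{every} fixed $r>1$. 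This discrepancy is minor in practice: the paper defines $c_r$ for all $r>1$ and then uses monotonicity and the limit $r\to+\infty$, whereas your version would define $c_r$ only for $r\geq r_0$, which still suffices for the limiting procedure of Section \ref{seccompact}. Moreover the repair is immediate: replace the fixed cap $\|u\|_2^2/\mu\leq 1/2$ by the $r$-dependent bound displayed above. With that adjustment your argument is complete; the remaining estimates (the bound $|F(t)|\leq \frac{K_2}{2}t^2+\frac{K_p}{p}|t|^p$, the resulting lower bound for $E_{r,\mu}$ on the sphere, and the choice $e=t\phi_1$ with $t$ close to $\sqrt{\mu}$, which satisfies $\|e\|^2\to\mu\lambda_{1,0,0}\geq 2\rho^2$) are all correct.
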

\begin{proof}
    (i) For any $u \in U_{\mu}$ with $\|u\|=\rho$, where $\rho>0$ is small enough, we have
$$
\norm{u}_{2}^2 \leq \frac{1}{\lambda_{1,0,0}}\|u\|^{2}=\frac{\rho^{2}}{\lambda_{1,0,0}} <\mu.
$$
Thus, by the definition of $E_{r, \mu}(u)$, the monotonicity of $f_{r}(s)$ with respect to $s$, ($f_1$) and \eqref{eqgn}, we obtain
$$
\begin{aligned}
E_{r, \mu}(u) & =\frac{1}{2}\|u\|^{2}-\Psi(u)-f_{r}\left(\frac{\norm{u}^2_2}{\mu}\right) \\
& \geq \frac{1}{2}\|u\|^{2}-\Psi(u)-f_{r}\left(\frac{\rho^{2}}{\mu\lambda_{1,0,0}}\right) \\
& \geq \rho^{2}\left(\frac{1}{2}-\frac{K_2}{2\lambda_{1,0,0}}-C \rho^{p-2}-\frac{(\mu\lambda_{1,0,0})^{-r} \rho^{2 r-2}}{1-(\mu\lambda_{1,0,0})^{-1}\rho^{2}}\right).
\end{aligned}
$$
Since $K_2 <\lambda_{1,0,0}$, $2<p<2^*$ and $r>1$, there exist $\alpha, \rho>0$ small enough such that
$$
E_{r, \mu}(u) \geq \alpha>0 \quad \text{ for any}\,\,  u \in U_{\mu} \text { with }\|u\|=\rho.
$$
(ii) Choosing $u_{0} \in H^1_0(\Omega)$ with $\norm{u_0}^2_2=\mu$, it is easy to verify that
$$
\lim _{t \rightarrow 1^{-}} E_{r, \mu}(tu_0)=-\infty.
$$
Therefore, there exists $t_{0}<1$ such that $\norm{t_{0} u_{0}}\geq t_0\sqrt{\mu\lambda_{1,0,0}}>\rho$ and $ E_{r, \mu}(t_0u_0)<0$, then (ii) holds.
\end{proof}
By Theorem \ref{thmpopen},  the minimax value
$$
c_{r}:=\inf _{\gamma \in \Gamma_{r, \mu}} \max _{t \in[0,1]} E_{r, \mu}(\gamma(t))>0,
$$
is well defined, where
$$
\Gamma_{r,\mu} := \{\gamma \in C([0,1], U_{\mu}): \gamma(0)=0,\,  E_{r, \mu}(\gamma(1))<0\}.
$$
 If $r_{1} \leq r_{2}$, we have $c_{r_{1}} \leq c_{r_{2}}$ because $E_{r_1, \mu} \leq E_{r_2, \mu}$ on $U_{\mu}$.

For any $r>1$, the mountain pass geometry allows us to find a Cerami sequence $\{u_{n,r}\}_{n\geq 1}$  satisfying
\begin{equation*}
 E_{r, \mu}(u_{n,r}) \rightarrow c_{r}, \quad (1+\norm{u_{n,r}})\norm{E_{r, \mu}^{\prime}(u_{n,r})} \rightarrow 0.
\end{equation*}
Define $c_\infty=c_{\infty}(\mu):=\sup\limits_{r>1} c_{r}=\lim\limits_{r\to + \infty} c_{r}$. For all $u \in H^1_0(\Omega) \backslash\{0\}$ with $\norm{u}_2^2 = \mu$, we have
\begin{equation}\label{eqcssup}
 c_{\infty} \leq \sup _{0 \leq t<1} E(t u).
\end{equation}
Now, we provide an important upper bound estimate for $c_\infty$.
\begin{lemma}\label{lemcin}
 $c_{\infty}  \leq \frac{\mu\lambda_{1,0,0}}{2} < +\infty$.
\end{lemma}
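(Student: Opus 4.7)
The plan is to exploit the upper bound (\ref{eqcssup}), namely
$c_\infty \le \sup_{0\le t<1} E(tu)$ for any admissible $u$, by making a sharp choice of the test function $u$ built from the first eigenfunction of $-\Delta$ on $H_0^1(\Omega)$.

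Let $\varphi_1 \in H_0^1(\Omega)$ be a first eigenfunction of $(-\Delta, H_0^1(\Omega))$ normalized so that $\|\varphi_1\|_2 = 1$, so that $\|\varphi_1\|^2 = \int_\Omega |\nabla \varphi_1|^2 \, dx = \lambda_{1,0,0}$. Set $u := \sqrt{\mu}\,\varphi_1$. Then $\|u\|_2^2 = \mu$ and $\|u\|^2 = \mu \lambda_{1,0,0}$, and $u \in H_0^1(\Omega)\setminus\{0\}$ is admissible in (\ref{eqcssup}).

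The key observation is that under either assumption set, $F \ge 0$ on $\mathbb{R}$. Indeed, under ($f_2$), $f(t)t \ge 0$ forces $f \ge 0$ on $[0,\infty)$ and $f \le 0$ on $(-\infty,0]$, so $F(t) = \int_0^t f(s)\,ds \ge 0$ for every $t \in \mathbb{R}$. Under ($f_4$), $F \ge 0$ is immediate from $0 \le qF(t) \le f(t)t$. Consequently, for every $t \in [0,1)$,
\begin{equation*}
E(tu) \;=\; \frac{t^2}{2}\|u\|^2 - \int_\Omega F(tu)\,dx \;\le\; \frac{t^2}{2}\,\mu\lambda_{1,0,0} \;\le\; \frac{\mu\lambda_{1,0,0}}{2}.
\end{equation*}
Taking the supremum over $t \in [0,1)$ and combining with (\ref{eqcssup}) yields $c_\infty \le \frac{\mu\lambda_{1,0,0}}{2} < +\infty$, as required.

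There is really no obstacle here; the only small point to verify is the sign of $F$, which has to be checked separately in the two hypothesis regimes but follows in a single line in each case. The choice of the first eigenfunction is essentially forced, because it minimizes $\|u\|^2$ among admissible $u$ with $\|u\|_2^2 = \mu$ via the Rayleigh quotient, giving the sharpest quadratic estimate when dropping the (nonnegative) nonlinear term.
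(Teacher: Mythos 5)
Your proof is correct and follows essentially the same route as the paper: both test the mountain-pass bound \eqref{eqcssup} with the first eigenfunction normalized to mass $\mu$ and drop the nonnegative term $\Psi$, yielding $c_\infty \le \sup_{0\le t<1}\frac{t^2\mu\lambda_{1,0,0}}{2} \le \frac{\mu\lambda_{1,0,0}}{2}$. The only difference is cosmetic: you spell out the verification that $F\ge 0$ under each hypothesis regime, which the paper leaves implicit in the assertion $\Psi(t\varphi_1)\ge 0$.
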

\begin{proof}
Let $\varphi_1$ be the eigenvector corresponding to the first eigenvalue $\lambda_{1,0,0}$ with $\norm{\varphi_1}^2_2=\mu$. Then, since $\Psi(t\varphi_1) \geq 0$ for all $t \geq  0$, we conclude
$$
\sup _{0 \leq t<1}E(t\varphi_1) = \sup _{0 \leq t<1}\left[\frac{t^2\mu\lambda_{1,0,0}}{2} - \Psi(t\varphi_1)\right]\leq \frac{\mu\lambda_{1,0,0}}{2}.
$$
\end{proof}
It is the position to provide the proof of Theorem \ref{th2}-\ref{th3}.
\begin{proof}[Proof of Theorem \ref{th2}-\ref{th3}]
By Lemma \ref{lemur} and Lemma \ref{lemcin}, we know that there exists $u \in H^1_0(\Omega)$ satisfying $E(u)=c_\infty\leq \frac{\mu\lambda_{1,0,0}}{2}$, and one of the following two cases must hold:
     \begin{enumerate}[label=\rm(\roman*)]
\item either $u$ is a critical point of $E$ constrained on $S_{1,0,0}(\mu)$ with a Lagrange multiplier $\lambda \in [0,\frac{2c_\infty}{\mu}]$.
\item  or $u$ is a critical point of $E$ constrained on $S_{1,0,0}(\nu)$ for some $0<\nu < \mu$ with a Lagrange multiplier $\lambda =0$.
    \end{enumerate}
We first assume that ($f_1$)-($f_3$) hold and that $\mu \in (0,1)$. Applying Lemma \ref{lemnonex} with $M=\frac{\lambda_{1,0,0}}{2}$, for $0<\mu <\mu_0$, we conclude that the second case above can not occur.

Now, assume that ($f_1$) and ($f_4$) hold. As in Lemma \ref{lemnonex}, setting
$$
\mu^*:=\mu^*(K_2,K_p,\Omega,\frac{\lambda_{1,0,0}}{2},p,q)=
\begin{cases}
 \left(\frac{\lambda_{1,0,0}-K_2}{K_pC_{p,N}}\right)^{\frac{2}{p-2}}\lambda_{1,0,0}^{-\frac{N}{2}}  &\text{ if } 2<p\leq 2+\frac{4}{N},\\
    \left(\frac{\lambda_{1,0,0}-K_2}{K_pC_{p,N}}\right)^{\frac{2}{p-2}}\left(\frac{q}{q-2}\right)^{{\frac{2}{p-2}}-\frac{N}{2}}\lambda_{1,0,0}^{-\frac{N}{2}} &\text{ if }  2+\frac{4}{N}<p<2^*.
\end{cases}
$$
By Lemma \ref{lemnonex}, we conclude that, for any $0<\mu\leq\mu^*$,
the second case above can not occur.
\end{proof}
\begin{proof}[Proof of Corollary \ref{co1}]
(i) follows immediately from Theorem \ref{th3}.

(ii) Now, assume that ($f_4$) hold, and suppose that $|f(t)|\leq a\left(\abs{t}^{p'-1} +\abs{t}^{p-1}\right)$  for some constant $a>0$, where $2+\frac{4}{N}<p'<p<2^*$. Since
$$
a\abs{t}^{p'-1}\leq \begin{cases}
    \frac{\lambda_{1,0,0}}{2}\abs{t}\quad &\text{if }\abs{t}\leq \left(\frac{\lambda_{1,0,0}}{2a}\right)^\frac{1}{p'-2},\\
    a\left(\frac{\lambda_{1,0,0}}{2a}\right)^\frac{p'-p}{p'-2}\abs{t}^{p-1} &\text{if }\abs{t}> \left(\frac{\lambda_{1,0,0}}{2a}\right)^\frac{1}{p'-2},
\end{cases}
$$
we have
    $$
        |f(t)|\leq \frac{\lambda_{1,0,0}}{2}\abs{t} +a\left(1+\left(\frac{\lambda_{1,0,0}}{2a}\right)^\frac{p'-p}{p'-2}\right)\abs{t}^{p-1}.
    $$
   Setting $K_2 =\frac{\lambda_{1,0,0}}{2}$ and $K_p=a+a\left(\frac{\lambda_{1,0,0}}{2a}\right)^\frac{p'-p}{p'-2}$. As in Theorem \ref{th3}, setting
$$
\begin{aligned}
\mu^*:=&\left(\frac{\lambda_{1,0,0}-K_2}{K_pC_{p,N}}\right)^{\frac{2}{p-2}}\left(\frac{q}{q-2}\right)^{{\frac{2}{p-2}}-\frac{N}{2}}\lambda_{1,0,0}^{-\frac{N}{2}}\\
=&\frac{\lambda_{1,0,0}^{{\frac{2}{p-2}}-\frac{N}{2}}}{\left(aC_{p,N}+aC_{p,N}\left(\frac{\lambda_{1,0,0}}{2a}\right)^\frac{p'-p}{p'-2}\right)^\frac{2}{p-2}}\left(\frac{q}{q-2}\right)^{\frac{2}{p-2}-\frac{N}{2}}\left(\frac{1}{2}\right)^{\frac{2}{p-2}}.
\end{aligned}
$$  By Theorem \ref{th3}, we know that, if  $0<\mu \leq \mu^*$ holds,
   then $(P)^{\mu}_{1,0,0}$ admits a solution $u \in H^1_0(\Omega)$ with $\lambda \in [0, \lambda_{1,0,0}]$. Moreover, we compute that
   $$
   p'-p<0,\,\,\,\,\,\frac{2}{p-2}-\frac{N}{2}-\frac{2(p'-p)}{(p'-2)(p-2)}=\frac{2}{p'-2}-\frac{N}{2}<0,
   $$
   hence
   $$
   \frac{\lambda_{1,0,0}^{{\frac{2}{p-2}}-\frac{N}{2}}}{\left(aC_{p,N}+aC_{p,N}\left(\frac{\lambda_{1,0,0}}{2a}\right)^\frac{p'-p}{p'-2}\right)^\frac{2}{p-2}}\to +\infty \quad \text{as }\lambda_{1,0,0} \to 0^+.
   $$
   Thus, there exists a constant $\lambda^*>0$ depending on $a$, $p$, $p'$, $q$, $N$ and $\mu$ such that, if $0<\lambda_{1,0,0}<\lambda^*$, then $(P)^{\mu}_{1,0,0}$ admits a solution $u \in H^1_0(\Omega)$ with $\lambda \in [0, \lambda_{1,0,0}]$.
\end{proof}
\subsection{Proof of Theorem \ref{th4}}\label{secp4}
In this subsection, we will consider the multiplicity of normalized solutions to problem $(P)^{\mu}_{1,0,0}$ and prove Theorem \ref{th4}. Throughout, we assume that either($f_1$)-($f_3$) hold or ($f_1$) and ($f_4$) hold. Moreover, we also assume that $f(t)=-f(-t)$.

By \cite[Theorem 1 in Section 6.5]{evans}, $(-\Delta,H^{1}_0(\Omega))$ has an increasing sequence of infinity distinct eigenvalues $0<\lambda_1=\lambda_{1,0,0}<\lambda_{2}<\lambda_{3}<\cdots$. For any $2 \leq j <+\infty$, let $\varphi_j$ be an eigenfunction corresponding to the eigenvalue $\lambda_{j}$.
For $j \geq 2$ and $r>1$, define
$$
\begin{aligned}
    &Y_j:=\operatorname{span}\{u \in H^1_0(\Omega):-\Delta u=\lambda u \text{ for some }\lambda \leq \lambda_{j}\},\\
    & Z_j:=Y_j^\perp\cup\operatorname{span}\{\varphi_j\},
\end{aligned}
$$
and define
$$
B_{r,j}:=\{u\in Y_j  : \norm{u}\leq \rho_{r,j}\}, \quad N_{r,j}:=\{u\in Z_j : \norm{u}= \xi_{r,j}\},
$$
where $\rho_{r,j}>\xi_{r,j}>0$.  By the Fountain theorem \cite[Theorem 3.5]{W}, for the functional $J_{r,\mu}$ defined in Section \ref{subsecpf}, see \eqref{Jrmu}, we have the following result.
\begin{lemma}
    For $r>1$ and $j \geq 2$, define
    \begin{equation*}\begin{aligned}
&c_{r,j} := \inf_{\gamma \in \Gamma_{r,j}} \max_{u \in B_{r,j}} J_{r,\mu}(\gamma(u)),\\
&\Gamma_{r,j} := \{\gamma \in C(B_{r,j}, H^1_0(\Omega)) : -\gamma(u)=\gamma(-u) \text{ for all } u \in B_{r,j}\text{ and } \gamma\left.\right|_{\partial B_{r,j}} = \operatorname{id}\}.
    \end{aligned}
\end{equation*}
If
$$
b_{r,j} := \inf_{\begin{subarray}{c} u \in Z_j \\ \norm{u}= \xi_{r,j}\end{subarray}} J_{r,\mu}(u) > 0 > a_{r,j} := \max_{\begin{subarray}{c} u \in Y_j \\ \norm{u} = \rho_{r,j} \end{subarray}} J_{r,\mu}(u),
$$
 then \( c_{r,j} \geq b_{r,j} >0 \), and, for $E_{r, \mu}$, there exists a Palais-Smale sequence $\{u_{n,r,j}\}_{n\geq 1}$ satisfying
\begin{equation*}
 E_{r, \mu}(u_{n,r,j}) \rightarrow c_{r,j} \quad \text{and} \quad E_{r, \mu}^{\prime}(u_{n,r,j}) \rightarrow 0 \quad\text{as }n \to +\infty.
\end{equation*}
\end{lemma}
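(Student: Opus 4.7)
The plan is to verify the hypotheses of the classical symmetric minimax (Fountain) principle for the even $C^1$ functional $J_{r,\mu}$ on the Hilbert space $H^1_0(\Omega)$, and then transfer the resulting Palais-Smale sequence back to the original perturbed functional $E_{r,\mu}$. That $J_{r,\mu}\in C^1(H^1_0(\Omega),\mathbb{R})$ has already been recorded, and its evenness follows from the oddness of $f$ (so $F$ is even), the fact that $H_{r,\mu}$ depends only on $\|u\|_2^2$, and that $\beta$ is applied only to even quantities in $u$.

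The first step is the linking inequality $c_{r,j}\geq b_{r,j}$. Let $Y_{j-1}$ be the span of eigenfunctions corresponding to eigenvalues strictly less than $\lambda_j$, and let $P:H^1_0(\Omega)\to Y_{j-1}$ denote the orthogonal projection, so that $Z_j=\ker P$ (reading $Z_j$ as the sum, rather than the set-theoretic union, indicated in the paper). Fix any $\gamma\in\Gamma_{r,j}$ and consider the odd continuous map
$$
\Phi:B_{r,j}\to Y_{j-1}\times\mathbb{R},\qquad \Phi(u)=\bigl(P\gamma(u),\,\|\gamma(u)\|-\xi_{r,j}\bigr).
$$
On $\partial B_{r,j}$ the hypothesis $\gamma|_{\partial B_{r,j}}=\mathrm{id}$ combined with $\rho_{r,j}>\xi_{r,j}$ forces the second coordinate to be strictly positive, so in particular $\Phi$ does not vanish on $\partial B_{r,j}$. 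Since $\dim Y_j=\dim Y_{j-1}+\dim\ker(-\Delta-\lambda_j)\geq \dim Y_{j-1}+1=\dim(Y_{j-1}\times\mathbb{R})$ and $\Phi$ is odd, a standard Borsuk-Ulam/Brouwer-degree argument produces a zero $u^{*}$ of $\Phi$ in the interior of $B_{r,j}$. This zero satisfies $\gamma(u^{*})\in Z_j$ and $\|\gamma(u^{*})\|=\xi_{r,j}$, i.e.\ $\gamma(u^{*})\in N_{r,j}$. Therefore
$$
\max_{u\in B_{r,j}}J_{r,\mu}(\gamma(u))\geq J_{r,\mu}(\gamma(u^{*}))\geq b_{r,j},
$$
and taking the infimum over $\gamma\in\Gamma_{r,j}$ yields $c_{r,j}\geq b_{r,j}>0$.

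The second step is to extract the Palais-Smale sequence. Because $J_{r,\mu}$ is smooth and even on the whole Hilbert space $H^1_0(\Omega)$ and the class $\Gamma_{r,j}$ is stable under odd continuous homotopies that fix $\partial B_{r,j}$, the quantitative deformation lemma applied in the standard Willem fashion (see \cite[Theorem 3.5]{W}) produces a sequence $\{u_{n,r,j}\}\subset H^1_0(\Omega)$ with $J_{r,\mu}(u_{n,r,j})\to c_{r,j}$ and $J'_{r,\mu}(u_{n,r,j})\to 0$. Since $c_{r,j}>0$, for $n$ large $J_{r,\mu}(u_{n,r,j})>0$; by the piecewise definition of $J_{r,\mu}$ and the fact that $\beta\leq 0$ on $(-\infty,0]$, this forces $\|u_{n,r,j}\|_2^2<\mu$ and $E_{r,\mu}(u_{n,r,j})>0$. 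On the open set $\{u\in U_{\mu}:E_{r,\mu}(u)>0\}$ the cut-off $\beta$ acts as the identity on its argument, so $J_{r,\mu}=E_{r,\mu}$ and, by the chain rule, $J'_{r,\mu}=\beta'(E_{r,\mu})E'_{r,\mu}=E'_{r,\mu}$ at the points $u_{n,r,j}$. Consequently $\{u_{n,r,j}\}$ is the desired Palais-Smale sequence for $E_{r,\mu}$ at level $c_{r,j}$.

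The main obstacle is the symmetric intersection step: one needs the codimension relation between $Y_j$ and $Z_j$ to be compatible with a direct Borsuk-Ulam/degree application on the finite-dimensional ball $B_{r,j}$, which is where the assumptions $a_{r,j}<0<b_{r,j}$ together with the choice $\dim Y_j\geq\dim Y_{j-1}+1$ come together. Once this topological step is secured, all subsequent analysis is the standard Willem-type symmetric minimax machinery, and the passage from $J_{r,\mu}$ to $E_{r,\mu}$ is immediate because the critical level $c_{r,j}$ lies well above the region where the cut-off $\beta$ acts nontrivially.
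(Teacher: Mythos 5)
Your overall route is the same as the paper's: the paper proves this lemma simply by invoking the Fountain theorem \cite[Theorem 3.5]{W} for the even functional $J_{r,\mu}$ (together with Lemma \ref{lemc1} to pass from $J_{r,\mu}$ to $E_{r,\mu}$), and your final transfer step --- using that $J_{r,\mu}=E_{r,\mu}$ and $J_{r,\mu}'=E_{r,\mu}'$ wherever $J_{r,\mu}>0$, which holds along the minimax sequence since $c_{r,j}>0$ --- is correct. The gap is in your self-contained proof of the intersection (linking) property, which is the heart of the Fountain theorem. The map $\Phi(u)=\bigl(P\gamma(u),\,\|\gamma(u)\|-\xi_{r,j}\bigr)$ is \emph{not} odd: its first component is odd, but its second component is even, because $\|\gamma(-u)\|=\|-\gamma(u)\|=\|\gamma(u)\|$. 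So Borsuk--Ulam does not apply to $\Phi$. Moreover, no Brouwer-degree argument based on the boundary values of $\Phi$ can work: on $\partial B_{r,j}$ one has $\Phi(u)=(Pu,\rho_{r,j}-\xi_{r,j})$, which takes values in the half-space $\{(y,t):t>0\}$, hence is null-homotopic in the complement of the origin; consequently $\deg(\Phi,\mathrm{int}\,B_{r,j},0)=0$ and degree theory forces no interior zero. This failure is unavoidable for any argument that uses only $\gamma|_{\partial B_{r,j}}=\mathrm{id}$: without oddness of $\gamma$ \emph{in the interior}, the intersection property is simply false. The correct standard argument (Willem's intersection lemma) uses interior oddness as follows: set $\mathcal{O}:=\{u\in\mathrm{int}\,B_{r,j}:\|\gamma(u)\|<\xi_{r,j}\}$, a bounded symmetric open neighborhood of $0$ in $Y_j$ (it contains $0$ because oddness gives $\gamma(0)=0$); since $\|\gamma\|=\rho_{r,j}>\xi_{r,j}$ on $\partial B_{r,j}$, one has $\partial\mathcal{O}\subset\{u\in\mathrm{int}\,B_{r,j}:\|\gamma(u)\|=\xi_{r,j}\}$; now apply Borsuk's theorem to the genuinely odd map $P\gamma:\partial\mathcal{O}\to Y'$, whose target has dimension $\dim Y_j-1<\dim Y_j$, to obtain $u^*\in\partial\mathcal{O}$ with $P\gamma(u^*)=0$, i.e.\ $\gamma(u^*)\in N_{r,j}$.

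A second, related slip concerns your identification of $Z_j$. You take $P$ to be the orthogonal projection onto $Y_{j-1}$ (eigenfunctions with eigenvalue strictly below $\lambda_j$) and claim $Z_j=\ker P$. This is only true when $\lambda_j$ is simple. Dirichlet eigenvalues may be degenerate, and then $\ker P=Y_{j-1}^{\perp}=Y_j^{\perp}\oplus E_j$ (with $E_j$ the full eigenspace of $\lambda_j$) strictly contains the paper's $Z_j=Y_j^{\perp}\oplus\mathrm{span}\{\varphi_j\}$, over whose sphere $b_{r,j}$ is defined. Your argument would then only produce an intersection of $\gamma(B_{r,j})$ with the \emph{larger} sphere $\{u\in Y_{j-1}^{\perp}:\|u\|=\xi_{r,j}\}$, whose infimum of $J_{r,\mu}$ may lie below $b_{r,j}$, so the chain $\max_{B_{r,j}}J_{r,\mu}(\gamma(\cdot))\geq J_{r,\mu}(\gamma(u^*))\geq b_{r,j}$ breaks. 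The repair is to take $P$ to be the orthogonal projection onto $Y':=Y_j\ominus\mathrm{span}\{\varphi_j\}$, for which $\ker P=Y_j^{\perp}\oplus\mathrm{span}\{\varphi_j\}=Z_j$ and $\dim Y'=\dim Y_j-1$; with this choice, the Willem-type argument above lands the intersection point exactly in $N_{r,j}$ and the lemma follows.
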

In view of \cite{Bar,Ce}, the existence of Palais-Smale sequence $\{u_{n,r,j}\}_{n\geq1}$ in Lemma \ref{lemfountain} can be replaced by the existence of Cerami sequence $\{u_{n,r,j}\}_{n\geq1}$ satisfying
\begin{equation*}
 E_{r, \mu}(u_{n,r,j}) \rightarrow c_{r,j}\quad \text{and} \quad (1+\norm{u_{n,r,j}})\norm{E_{r, \mu}^{\prime}(u_{n,r,j})} \rightarrow 0 \quad\text{as }n \to +\infty.
\end{equation*}

To apply Lemma \ref{lemur},  we first establish some basic frameworks as in Section \ref{secp23}.

For $r>1$ , $j \geq 2$, set
    $$\rho_{r,j}= \sqrt{\mu\lambda_{j} }.$$
\begin{lemma}\label{lemfountain}
    For $r>1$ and $j \geq 2$,
    $$c_{r,j} \leq \frac{\mu \lambda_{j}}{2} \quad \text{ and } \quad a_{r,j}=-1.$$
Moreover, for any $j \geq 2$ and $\varepsilon>0$, there exist $\xi_{\varepsilon,j}>0$ and $r_{\varepsilon,j}>1$, such that, for all $r\geq r_{\varepsilon,j}$ and $\xi_{r,j}=\xi_{\varepsilon,j}$,
$$
\widetilde{b_{r,j}}:=\inf_{\begin{subarray}{c} u \in Z_j \\ \norm{u}= \xi_{r,j}\end{subarray}} E_{r,\mu}(u) = \inf_{\begin{subarray}{c} u \in Z_j \\ \norm{u}= \xi_{\varepsilon,j}\end{subarray}} E_{r,\mu}(u)  \geq
\frac{\mu(\lambda_{j}-K_2)}{2}-\frac{K_pC_{p,N}\mu^\frac{p}{2}\lambda_{j}^\frac{\beta_pp}{2}}{p}-\varepsilon
$$
\end{lemma}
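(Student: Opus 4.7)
The plan is to handle the three assertions in sequence, exploiting the spectral decomposition of $(-\Delta, H_0^1(\Omega))$ in a straightforward way.

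For $c_{r,j} \leq \mu\lambda_j/2$, I would test the definition of the min-max with $\gamma = \operatorname{id}: B_{r,j} \to H^1_0(\Omega)$: this map is odd and coincides with the identity on $\partial B_{r,j}$, so $\operatorname{id}\in\Gamma_{r,j}$. I would then evaluate $J_{r,\mu}$ on $B_{r,j}$ in two cases. If $\norm{u}_2^2\geq \mu$, the definition of $J_{r,\mu}$ forces $J_{r,\mu}(u)=-1\leq \mu\lambda_j/2$. Otherwise $u\in U_\mu$, and from $F\geq 0$ (granted by $f(t)t\geq 0$ in $(f_2)$, or directly by $(f_4)$) together with $f_r\geq 0$ I get $E_{r,\mu}(u)\leq \tfrac12\norm{u}^2\leq \rho_{r,j}^2/2=\mu\lambda_j/2$, and since $\beta(t)\leq \max(t,0)$ I conclude $J_{r,\mu}(u)\leq \mu\lambda_j/2$. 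For $a_{r,j}=-1$, I would use that $Y_j$ is spanned by eigenfunctions with eigenvalue $\leq \lambda_j$, giving $\norm{u}^2\leq \lambda_j\norm{u}_2^2$ on $Y_j$, so $\norm{u}=\rho_{r,j}=\sqrt{\mu\lambda_j}$ forces $\norm{u}_2^2\geq \mu$ and hence $J_{r,\mu}(u)=-1$.

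For the lower bound on $\widetilde{b_{r,j}}$, the crucial spectral inequality goes the other way: since eigenfunctions of distinct eigenvalues are orthogonal for both the $L^2$- and the $H^1_0$-inner products, every $u\in Z_j = Y_j^\perp\cup \operatorname{span}\{\varphi_j\}$ satisfies $\norm{u}^2\geq \lambda_j\norm{u}_2^2$. Fixing $\eta>0$ small, I would set $\xi_{\varepsilon,j}:=\sqrt{\mu\lambda_j-\eta}$ so that $\norm{u}_2^2\leq \xi_{\varepsilon,j}^2/\lambda_j=\mu-\eta/\lambda_j$ stays strictly below $\mu$. Combining $(f_1)$ (which gives $|F(t)|\leq (K_2/2)t^2+(K_p/p)|t|^p$), the Gagliardo--Nirenberg inequality \eqref{eqgn}, and $\norm{u}_2^2\leq \norm{u}^2/\lambda_j$, I would estimate
$$
\Psi(u)\leq \frac{K_2}{2\lambda_j}\norm{u}^2 + \frac{K_pC_{p,N}}{p}\lambda_j^{-(1-\beta_p)p/2}\norm{u}^p,
$$
while monotonicity of $f_r$ (from \eqref{eqfr's}) gives $f_r(\norm{u}_2^2/\mu)\leq f_r(\tau_\eta)$ with $\tau_\eta:=1-\eta/(\mu\lambda_j)\in(0,1)$. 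Evaluating at $\norm{u}=\xi_{\varepsilon,j}$ and bounding $(\mu\lambda_j-\eta)^{p/2}\leq (\mu\lambda_j)^{p/2}$ then yields exactly the leading term $\tfrac{\mu(\lambda_j-K_2)}{2}-\tfrac{K_pC_{p,N}\mu^{p/2}\lambda_j^{\beta_pp/2}}{p}$, plus an additive remainder consisting of a term linear in $\eta$ and the penalization $f_r(\tau_\eta)$.

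The only mildly delicate point is the order of calibration of the two parameters. I would first fix $\eta>0$ small, depending only on $\varepsilon$, $\lambda_j$, $K_2$ and $p$, so that the linear-in-$\eta$ defect is at most $\varepsilon/2$; this choice freezes $\tau_\eta\in(0,1)$, and in particular the factor $1/(1-\tau_\eta)=\mu\lambda_j/\eta$ that would blow up if $\eta$ were sent to zero. Only afterwards would I choose $r_{\varepsilon,j}$ large enough that $f_r(\tau_\eta)=\tau_\eta^r/(1-\tau_\eta)\leq \varepsilon/2$ for all $r\geq r_{\varepsilon,j}$, which is possible precisely because $\tau_\eta^r\to 0$ as $r\to\infty$. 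This ordering is what makes the claim hold uniformly in $r\geq r_{\varepsilon,j}$ with the $r$-independent radius $\xi_{r,j}=\xi_{\varepsilon,j}$, and it is the only place where the penalization mechanism is really used in the argument.
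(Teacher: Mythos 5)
Your proposal is correct and follows essentially the same route as the paper: the identity map as test path together with $\Psi,\,f_r\geq 0$ for the bound $c_{r,j}\leq \mu\lambda_j/2$, the spectral inequality $\norm{u}^2\leq \lambda_j\norm{u}_2^2$ on $Y_j$ for $a_{r,j}=-1$, and on $Z_j$ the reverse inequality plus $(f_1)$ and \eqref{eqgn} with a radius just below $\sqrt{\mu\lambda_j}$, calibrating first the gap parameter and then $r$. Your $\eta$ is exactly the paper's $k$ in disguise ($\eta=\mu\lambda_j/k$, so $\tau_\eta=\tfrac{k-1}{k}$ and $f_r(\tau_\eta)=k\bigl(\tfrac{k-1}{k}\bigr)^r$), and the order of choices — fix the gap, then send $r$ large — matches the paper's argument.
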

\begin{proof}
For all $u \in Y_j$, we have
$$
\norm{u}^2\leq \lambda_{j} \norm{u}^2_2.
$$
Then,
$$
\sup_{u \in Y_j \cap U_\mu}E_{r,\mu}(\operatorname{id}(u)) = \sup_{u \in Y_j \cap U_\mu}E_{r,\mu}(u) \leq \frac{\mu\lambda_{j}}{2}.
$$
Thus,
$$
c_{r,j} \leq \sup_{u \in Y_j \cap U_\mu}J_{r,\mu}(\operatorname{id}(u)) \leq \max\left\{0, \sup_{u \in Y_j \cap U_\mu}E_{r,\mu}(\operatorname{id}(u))\right\} \leq \frac{\mu \lambda_{j}}{2}.
$$
Moreover, it follows from $\rho_{r,j}= \sqrt{\mu\lambda_{j} }$ that
    $$
     \norm{u}^2_2 \geq \mu, \quad \mbox{for} \quad \|u\|=\rho_{r,j}.
    $$
Hence, $a_{r,j}=-1$.

For some $k>1$ and $k\approx +\infty$, set
$$
\xi_{\varepsilon,j}= \sqrt{\frac{k-1}{k}\mu\lambda_{j} }.
$$
For all $u \in Z_j$ with $\norm{u}=\xi_{\varepsilon,j}$, one has
    $$
     \norm{u}^2_2 \leq \frac{\norm{u}^2}{\lambda_{j}} = \frac{\mu(k-1)}{k},
    $$
and thus, by \eqref{eqgn} and ($f_1$),
$$
E_{r,\mu}(u)\geq \frac{k-1}{k}\frac{\mu\lambda_{j}}{2}-\frac{(k-1)}{k}\frac{K_2\mu}{2}-\left(\frac{k-1}{k}\right)^\frac{p}{2}\frac{K_pC_{p,N}\mu^\frac{p}{2}\lambda_{j}^\frac{\beta_pp}{2}}{p}- k\left(\frac{k-1}{k}\right)^r.
$$
Thus, for any $\varepsilon>0$, there exist $\xi_{\varepsilon,j}>0$ and $r_{\varepsilon,j}>1$, such that, for all $r\geq r_{\varepsilon,j}$ and $\xi_{r,j}=\xi_{\varepsilon,j}$,
$$
\widetilde{b_{r,j}}\geq \frac{\mu(\lambda_{j}-K_2)}{2}-\frac{K_pC_{p,N}\mu^\frac{p}{2}\lambda_{j}^\frac{\beta_pp}{2}}{p}-\varepsilon.
$$
\end{proof}
Now, we are ready to prove Theorem \ref{th4}.
\begin{proof}[Proof of Theorem \ref{th4}]
For any $j \geq 2$, if $1<r_{1} \leq r_{2}$ and $c_{r_{1},j}>0$, then, for all $\gamma \in \Gamma_{r_1,j}$, we have $\displaystyle\max_{u \in B_{r_1,j}} J_{r_1,\mu}(\gamma(u))>0$, and thus $$\max_{u \in B_{r_1,j}} J_{r_1, \mu}(\gamma(u))=\max_{u \in B_{r_1,j}} E_{r_1, \mu}(\gamma(u)).$$ Since $\rho_{r,j}, B_{r,j}$ and $\Gamma_{r,j}$  are independent of $r>1$ and $E_{r_1, \mu} \leq E_{r_2, \mu}$ on $H^1_0(\Omega)$,
     \begin{equation}\label{eqcr1r2}
    \begin{aligned}
    c_{r_{1},j}= \inf_{\gamma \in \Gamma_{r_1,j}} \max_{u \in B_{r_1,j}} J_{r_1,\mu}(\gamma(u))= \inf_{\gamma \in \Gamma_{r_1,j}} \max_{u \in B_{r_1,j}} E_{r_1, \mu}(\gamma(u))\leq\inf_{\gamma \in \Gamma_{r_2,j}} \max_{u \in B_{r_2,j}} E_{r_2, \mu}(\gamma(u)).
     \end{aligned}
     \end{equation}
     Hence, for all $\gamma \in \Gamma_{r_2,j}$, we have $\displaystyle\max_{u \in B_{r_2,j}} E_{r_2,\mu}(\gamma(u))>0$. Then, by \eqref{eqcr1r2},
     $$
     c_{r_{1},j}\leq\inf_{\gamma \in \Gamma_{r_2,j}} \max_{u \in B_{r_2,j}} E_{r_2, \mu}(\gamma(u))=\inf_{\gamma \in \Gamma_{r_2,j}} \max_{u \in B_{r_2,j}} J_{r_2,\mu}(\gamma(u))=c_{r_{2},j}.
     $$
     For $j \geq 2$, if $c_{r_{1},j}>0$ for some $r_1 >1$, then define $c_{\infty,j}:=\lim_{r \to +\infty}c_{r,j}$ and denote $c_{\infty,1}:=c_\infty$, which is defined in Lemma \ref{lemcin}.

By the definition of $J_{r,\mu}(u)$, we know that, if $\widetilde{b_{r,j}}>0$, then $b_{r,j}=\widetilde{b_{r,j}}>0$. Let $k_1=1$. Fix $m \geq 2$. Since $\lambda_{j} \to +\infty$ as $j \to +\infty$, we can choose $\{k_i\}_{i=2}^m$ such that
$$\lambda_{1,0,0} < \lambda_{k_2}-K_2 \leq \lambda_{k_2}<\cdots<\lambda_{k_{m-1}}-K_2\leq \lambda_{k_{m-1}} < \lambda_{k_{m}}-K_2\leq \lambda_{k_{m}}.$$ Then, there exists $\widetilde{\mu_{k_m}}$ such that, for all $0<\mu<\widetilde{\mu_{k_m}}$,
$$
\frac{\mu \lambda_{1,0,0}}{2}< \frac{\mu(\lambda_{k_2}-K_2)}{2}-\frac{K_pC_{p,N}\mu^\frac{p}{2}\lambda_{k_2}^\frac{\beta_pp}{2}}{p} \leq \frac{\mu\lambda_{k_2}}{2}<\cdots <\frac{\mu(\lambda_{k_{m}}-K_2)}{2}-\frac{K_pC_{p,N}\mu^\frac{p}{2}\lambda_{k_{m}}^\frac{\beta_pp}{2}}{p} \leq \frac{\mu\lambda_{k_{m}}}{2}.
$$
Fix $0<\mu<\widetilde{\mu_{k_m}}$. By Lemma \ref{lemfountain}, for any $2 \leq i \leq m$, there exist $\xi_{k_i}>0$ and $r_{k_i}>1$, such that, for all $r\geq r_{k_i}$ and $\xi_{r,k_i}=\xi_{k_i}$,
$$
\frac{\mu\lambda_{k_{i}}}{2}\geq c_{r,k_i}\geq b_{r,k_i}=\widetilde{b_{r,k_i}}=\inf_{\begin{subarray}{c} u \in Z_{k_i} \\ \norm{u}= \xi_{r,k_i}\end{subarray}} E_{r,\mu}(u)  > \frac{\mu\lambda_{k_{i-1}}}{2}.
$$
Let $\displaystyle r_0 = \max_{2\leq i\leq m}\{r_{k_i}\}$ and $\xi_{r,k_i}=\xi_{k_i}$. Thus, for $r \geq r_0$, we conclude
$$
    0 < c_{\infty,1} \leq \frac{\mu\lambda_{1}}{2} < b_{r,k_2} \leq c_{\infty,k_2} \leq \frac{\mu\lambda_{k_2}}{2} <\cdots <
    b_{r,k_m}\leq c_{\infty,k_m} \leq \frac{\mu\lambda_{k_m}}{2}.
$$

    Since $\lambda_{k_m} < +\infty$, by Lemma \ref{lemur} and \ref{lemcin}, we conclude that, for any $2 \leq i \leq m$, there exists $u_{k_i} \in H^1_0(\Omega)$ satisfying $E(u_{k_i})=c_{\infty,k_i}$ and $c_{\infty,k_{i-1}}<c_{\infty,k_i}\leq \frac{\mu\lambda_{k_i}}{2}$, and one of the following two cases must hold:
     \begin{enumerate}[label=\rm(\roman*)]
        \item either $u_{k_i}$ is a critical point of $E$ constrained on $S_{1,0,0}(\mu)$ with a Lagrange multiplier $\omega_{k_i} \in [0,\lambda_{k_i}]$.
        \item  or $u_{k_i}$ is a critical point of $E$ constrained on $S_{1,0,0}(\nu)$ for some $0<\nu < \mu$ with a Lagrange multiplier $\omega_{k_i} =0$.
    \end{enumerate}
First, assume that ($f_1$)-($f_3$) hold and that $\mu \in (0,1)$. Applying Lemma \ref{lemnonex} with $M=\frac{\lambda_{k_m}}{2}$, for \linebreak $0<\mu <\mu^*_{m} :=\min\{\mu_0,\widetilde{\mu_{k_m}}\}$, we conclude that, for any $1\leq i\leq m$, the second case can not occur, and thus, $E$ has at least $m$ critical points $u_{k_1}, u_{k_2},\cdots,u_{k_m}$ constrained on $S_{1,0,0}(\mu)$.

    Now, we assume assume that ($f_1$) and ($f_4$) hold. Applying Lemma \ref{lemnonex}, by fixing
    $$\mu^*(K_2,K_p,\Omega,\frac{\lambda_{k_m}}{2},p,q)=
\begin{cases}
 \left(\frac{\lambda_{1}-K_2}{K_pC_{p,N}}\right)^{\frac{2}{p-2}}\lambda_{1}^{-\frac{N}{2}}  &\text{ if } 2<p\leq 2+\frac{4}{N},\\
    \left(\frac{\lambda_{1}-K_2}{K_pC_{p,N}\lambda_{1}}\right)^{\frac{2}{p-2}}\left(\frac{q\lambda_{k_m}}{q-2}\right)^{{\frac{2}{p-2}}-\frac{N}{2}} &\text{ if }  2+\frac{4}{N}<p<2^*,
\end{cases}
    $$
and $0<\mu\leq \mu^*_{m} := \min\{\mu^*(K_2,K_p,\Omega,\frac{\lambda_{k_m}}{2},p,q), \widetilde{\mu_{k_m}}\}$, we derive that, for any $1\leq i\leq m$, the second case can not occur. Thus, $E$ has at least $m$ critical points $u_{k_1}, u_{k_2},\cdots,u_{k_m}$ constrained on $S_{1,0,0}(\mu)$.

\end{proof}
\section{The problem \texorpdfstring{$(P)^\mu_{1,1,1}$}{Lg}}\label{secp111}
In this section,  our goal is to study the following problem
$$
\left\{
\begin{array}{ll}
	-\Delta u= \lambda u + f(u)\quad & \text{in } \Omega, \\
	\frac{\partial u}{\partial \eta} = -u+g(u) & \text{on } \partial \Omega, \\
	\int_{\Omega}u^2\,dx = \mu,
\end{array}
\right.
\leqno{{(P)}^{\mu}_{1,1,1}}
$$
under the assumptions that $f,g$ satisfy the condition stated in the introduction, namely ($f_1'$), ($f_4$), ($g_1$) and ($g_2$).

Note that, since conditions ($f_4$) and ($g_2$) hold, there exists constants $q'>2$ and $q''>2$ such that
\begin{equation*}
       0 \leq q'F(t)\leq f(t)t \quad \text{and} \quad 0 \leq q''G(t)\leq g(t)t.
\end{equation*}
Therefore, set $q=\min\{ q',q''\}$, we have
\begin{equation}\label{eqarq}
    0 \leq qF(t)\leq f(t)t \quad \text{and} \quad 0 \leq qG(t)\leq g(t)t.
\end{equation}
As a consequence, without loss of generality, we assume that \eqref{eqarq} holds in place of ($f_4$) and ($g_2$).

By \cite[Theorem 1.2 and Theorem 1.9 in Section 1.1]{Ne}, we can define an equivalent norm on $H^1(\Omega)$ by
$$
\|u\|:=\left(\int_{\Omega}|\nabla u|^2\,dx+ \int_{\partial \Omega}|u|^2\,d\sigma\right)^{\frac{1}{2}}.
$$
The energy functional $E: H^1(\Omega) \to \mathbb{R}$ associated with  problem $(P)^\mu_{1,1,1}$ is given by
$$
E(u) := \frac{1}{2}\norm{u}^{2}-\Psi(u),
$$
where $$
\Psi(u)=\int_\Omega  F(u)\,dx + \int_{\partial \Omega} G(u)\,d\sigma.
$$

It follows from the Rellich-Kondrachov theorem that $H^1(\Omega)$ is compactly embedded in $L^p(\Omega)$ for all $2\leq p<2^*$.  Moreover, by the Sobolev embedding theorem, for any $p \in [2, 2^*)$, we have
\begin{equation}\label{eqgnh1}
    \|u\|_p^p \leq C_{p,\Omega} \| u\|^{p}, \,\,\text{for any}\,\, u \in  H^1(\Omega),
\end{equation}
where $ C_{p,\Omega} > 0$ depends on $p$ and $\Omega$.

By \cite[Theorem 6.2 in Section 2.6]{Ne}, we know that $H^1(\Omega)$ is compactly embedded in $L^l(\partial \Omega)$ for all $2\leq l<2^{\hexstar}$ (for $N=2$ and $2\leq l<\infty$,  we use the facts that, for some $\varepsilon>0$ sufficiently small, $H^1(\Omega)$ is continuously embedded in $W^{1,2-\varepsilon}(\Omega)$ and $W^{1,2-\varepsilon}(\Omega)$ is compactly embedded in $L^l(\partial \Omega)$) and
\begin{equation}
  \label{eqsi}
    \|u\|_{L^l(\partial\Omega)}^l \leq C_{l,\Omega}' \|u\|^{l} \quad \forall u \in  H^1(\Omega),
\end{equation}
where $C_{l,\Omega}'$ depends on $l$ and $\Omega$.

We study the existence and multiplicity of critical points of the functional $E$ constrained on the $L^2$-sphere$$S_{1,1,1}(\mu)= \left\{ u \in H^1(\Omega):\int_\Omega u^2\,dx = \mu \right\},$$where $\mu > 0$.

To this end, we introduce the following perturbation functional
$$
E_{r, \mu}(u) := \frac{1}{2}\norm{u}^{2}-\Psi(u)-H_{r,\mu}(u),\; u \in U_{\mu},
$$
where $U_{\mu} := \{u \in H^1(\Omega): \norm{u}_2^2<\mu\}$, and the penalization term $H_{r, \mu}(u)$ is defined as
$$
H_{r, \mu}(u) := f_{r}\left(\frac{\norm{u}_2^2}{\mu}\right) \text { with } f_{r}(s) := \frac{s^{r}}{1-s} \; \quad \forall s \in [0,1),
$$
where $r>1$ is a parameter that will be chosen large enough.

The proofs of Theorem \ref{th2.3} and \ref{th4.3} are almost the same as the proof of Theorem \ref{th2} and \ref{th4}, respectively, except for Lemma \ref{lemnonnomass}. Therefore, we only present a sketch of the proofs of Theorem \ref{th2.3} and \ref{th4.3} here.

Firstly, we provide a sketch of the proof of Theorem \ref{th2.3}. By repeating the argument used in Lemma \ref{lemmpg}, we conclude that, for any $u \in U_{\mu}$ with $\|u\|=\rho$, where $\rho>0$ is small enough, we have
$$
\norm{u}_{2}^2 \leq \frac{1}{\hat{\lambda}_{1,1,1}}\|u\|^{2}=\frac{\rho^{2}}{\hat{\lambda}_{1,1,1}} <\mu.
$$
Moreover, by \eqref{eqgnh1} and \eqref{eqsi}, we obtain
$$
\begin{aligned}
E_{r, \mu}(u) & =\frac{1}{2}\|u\|^{2}-\Psi(u)-f_{r}\left(\frac{\norm{u}^2_2}{\mu}\right) \\
& \geq \frac{1}{2}\|u\|^{2}-\Psi(u)-f_{r}\left(\frac{\rho^{2}}{\mu{\hat{\lambda}_{1,1,1}}}\right) \\
& \geq \rho^{2}\left(\frac{1}{2}-\frac{K_2}{2{\hat{\lambda}_{1,1,1}}}-\frac{K_2^g}{2{\tilde{\lambda}_{1,1,1}}}-C \rho^{p-2}-C\rho^{l-2}-\frac{(\mu{\hat{\lambda}_{1,1,1}})^{-r} \rho^{2 r-2}}{1-(\mu{\hat{\lambda}_{1,1,1}})^{-1}\rho^{2}}\right).
\end{aligned}
$$
Therefore, it follows from conditions ($f_1'$) and ($g_1$) that Lemma \ref{lemmpg} holds.

Next, by applying Theorem \ref{thmpopen} and repeating the arguments in Lemma \ref{lembounded}-\ref{lemur} and \ref{lemcin}, we conclude that, there exists $u \in H^1(\Omega)$ satisfying $E(u)\leq \frac{\mu\hat{\lambda}_{1,1,1}}{2}$, and one of the following two cases must hold:
     \begin{enumerate}[label=\rm(\roman*)]
\item either $u$ is a critical point of $E$ constrained on $S_{1,1,1}(\mu)$ with Lagrange multiplier $\lambda \in [0,\hat{\lambda}_{1,1,1}]$.
\item  or $u$ is a critical point of $E$ constrained on $S_{1,1,1}(\nu)$ for some $0<\nu < \mu$ with Lagrange multiplier $\lambda =0$.
    \end{enumerate}
To rule out the case (ii), we establish the following lemma.
\begin{lemma}\label{lemnonnomass}
   Assume that $(f_1')$, $(f_4)$, ($g_1$) and ($g_2$) hold. Then, given $M>0$, there exists a constant $\mu^{**}:=\mu^{**}(\Omega,f,g,M)>0$ such that, there exists no $u \in H^1(\Omega)$ satisfying
   $$
   E'(u) = 0, \,\,\,\,\,E(u)\leq M\mu^{**}.
   $$
\end{lemma}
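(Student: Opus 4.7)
The plan is to argue by contradiction, closely following the strategy of part (ii) of Lemma \ref{lemnex}, with the essential novelty being the need to simultaneously control the interior nonlinearity $f$ and the boundary nonlinearity $g$. Assume, toward a contradiction, that a (necessarily nontrivial) $u \in H^1(\Omega)$ satisfies $E'(u)=0$ and $E(u) \leq M\mu^{**}$. The implicit nontriviality is harmless in the intended application, since Lemma \ref{lemnonnomass} is invoked to rule out critical points lying on $S_{1,1,1}(\nu)$ with $\nu>0$.

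First, I would exploit the unified Ambrosetti-Rabinowitz condition \eqref{eqarq}, which holds simultaneously for $f$ and $g$ with a common $q>2$. Combining this with $\langle E'(u),u\rangle = 0$ and the identity $\|u\|^2 = 2E(u) + 2\Psi(u)$ yields the upper bound
$$
\|u\|^2 \leq \frac{2qE(u)}{q-2} \leq \frac{2qM\mu^{**}}{q-2}.
$$

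Next, rewriting $\langle E'(u),u\rangle = 0$ as $\|u\|^2 = \int_{\Omega} f(u)u\,dx + \int_{\partial\Omega} g(u)u\,d\sigma$, I would bound the right-hand side using the growth conditions ($f_1'$) and ($g_1$), the Sobolev embeddings \eqref{eqgnh1} and \eqref{eqsi}, and the spectral estimates $\hat{\lambda}_{1,1,1}\|u\|_2^2 \leq \|u\|^2$ and $\tilde{\lambda}_{1,1,1}\|u\|_{L^2(\partial\Omega)}^2 \leq \|u\|^2$ coming directly from the definitions \eqref{eqhat} and \eqref{eqtilde}. The smallness hypotheses $K_2 < \hat{\lambda}_{1,1,1}/4$ and $K_2^g < \tilde{\lambda}_{1,1,1}/4$ are precisely calibrated so that the quadratic contributions $K_2\|u\|_2^2$ and $K_2^g\|u\|_{L^2(\partial\Omega)}^2$ can each be absorbed into a fourth of $\|u\|^2$. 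After absorption, I would arrive at
$$
\tfrac{1}{2}\|u\|^2 \leq K_p C_{p,\Omega}\|u\|^p + K_l C_{l,\Omega}'\|u\|^l.
$$
Dividing by $\|u\|^2 > 0$ and inserting the upper bound from the first step produces
$$
\tfrac{1}{2} \leq K_p C_{p,\Omega}\left(\tfrac{2qM\mu^{**}}{q-2}\right)^{\frac{p-2}{2}} + K_l C_{l,\Omega}'\left(\tfrac{2qM\mu^{**}}{q-2}\right)^{\frac{l-2}{2}}.
$$
Since $p,l>2$, the right-hand side vanishes as $\mu^{**}\to 0^+$, so selecting $\mu^{**} = \mu^{**}(\Omega, f, g, M)$ sufficiently small drives it strictly below $1/2$, yielding the desired contradiction.

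The principal obstacle is the absorption step inside the second part: unlike the Dirichlet problem in Lemma \ref{lemnex}, one must track \emph{two} different spectral quotients (the interior $L^2$-quotient governed by $\hat{\lambda}_{1,1,1}$ and the boundary $L^2$-quotient governed by $\tilde{\lambda}_{1,1,1}$), and the sharp conditions $K_2 < \hat{\lambda}_{1,1,1}/4$ and $K_2^g < \tilde{\lambda}_{1,1,1}/4$ in ($f_1'$) and ($g_1$) are exactly what guarantees that both quadratic nuisances can be bought off simultaneously. Once that absorption is in place, the final scaling argument is routine.
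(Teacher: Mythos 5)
Your proposal is correct and follows essentially the same route as the paper's own proof: bound $\|u\|^2$ by $\frac{2qM\mu^{**}}{q-2}$ via the unified Ambrosetti--Rabinowitz condition \eqref{eqarq}, then use $E'(u)=0$ together with ($f_1'$), ($g_1$), the embeddings \eqref{eqgnh1} and \eqref{eqsi}, and the spectral bounds coming from \eqref{eqhat} and \eqref{eqtilde} to reach a contradiction once $\mu^{**}$ is small. The only differences are cosmetic: you absorb the quadratic terms into $\tfrac{1}{2}\|u\|^2$ whereas the paper keeps $1-\tfrac{K_2}{\hat{\lambda}_{1,1,1}}-\tfrac{K_2^g}{\tilde{\lambda}_{1,1,1}}$ (bounded below by $\tfrac{1}{2}$ under the same hypotheses) on the left, your exponent $\tfrac{p-2}{2}$ is the correct one (the paper's $p-2$ is a harmless slip), and you rightly flag the implicit nontriviality of $u$ needed to divide by $\|u\|^2$, which the paper leaves tacit.
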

\begin{proof}
By \eqref{eqarq}, we have
    $$
    \norm{u}^2 \leq \frac{2qE(u)-2\langle E'(u),u\rangle}{q-2}=\frac{2qM\mu^{**}}{q-2}.
    $$
Thereby, by $E'(u)=0$, \eqref{eqgnh1} and \eqref{eqsi}, one has
\begin{equation*}
        \begin{aligned}
        \norm{u}^2=\int_{\Omega}f(u)u\,dx+\int_{\Omega}g(u)u\,d\sigma&\leq K_2 \norm{u}_2^2+K_{p} C_{p,\Omega}\norm{u}^{{p}} +K^g_2\norm{u}_{L^2(\partial\Omega)}^2 + K_lC_{p,\Omega}'\norm{u}^l\\
        &\leq \frac{K_2}{\hat{\lambda}_{1,1,1}} \norm{u}^2 + \frac{K^g_2}{\tilde{\lambda}_{1,1,1}} \norm{u}^2 +K_{p} C_{p,\Omega}\norm{u}^{{p}} + K_lC_{p,\Omega}'\norm{u}^l.
        \end{aligned}
    \end{equation*}
    Moreover, we obtain
    \begin{equation*}
    \begin{aligned}
            1-\frac{K_2}{\hat{\lambda}_{1,1,1}}-\frac{K^g_2}{\tilde{\lambda}_{1,1,1}}&\leq K_{p} C_{p,\Omega}\norm{u}^{p-2} + K_lC_{p,\Omega}'\norm{u}^{l-2} \\
            &\leq K_{p} C_{p,\Omega}\left(\frac{2qM}{q-2}\right)^{p-2}(\mu^{**})^{p-2}+ K_lC_{p,\Omega}'\left(\frac{2qM}{q-2}\right)^{l-2}(\mu^{**})^{l-2}.
    \end{aligned}
    \end{equation*}
It follows from ($f_1'$) and ($g_1$) that, there exists a constant $\mu^{**}=\mu^{**}(\Omega,f,g,M)>0$ such that, there exists no $u \in H^1(\Omega)$ satisfying $E'(u) = 0$ and $E(u)\leq M\mu^{**}$.
\end{proof}
Applying Lemma \ref{lemnonnomass} with $M=\frac{\hat{\lambda}_{1,1,1}}{2}$, we complete the proof of Theorem \ref{th2.3}.\\

Now, we provide a sketch of the proof of Theorem \ref{th4.3}. By the Hilbert-Schmidt theorem (see, e.g., \cite[Theorem 7 in Appendix D]{evans}), there exists an increasing sequence of infinity distinct eigenvalues $0<\hat{\lambda}_{1,1,1}<\hat{\lambda}_{2}<\hat{\lambda}_{3}<\cdots$, i.e., for any $j\geq 2$, there exists $\varphi_j \in H^1(\Omega)\backslash\{0\}$ satisfying
$$
\int_\Omega \nabla \varphi_j \nabla v\, dx + \int_{\partial\Omega}\varphi_j v\, d\sigma = \lambda_j\int_{\Omega}\varphi_j v\, dx, \quad \forall v \in H^1(\Omega).
$$
 For each $j \geq 2$ and $r>1$, define
$$
\begin{aligned}
    &Y_j:=\operatorname{span}\{u \in H^1(\Omega):\text{For some }\lambda \leq \hat{\lambda}_{j}\text{ and for any }v \in H^1(\Omega),\,\int_\Omega \nabla u\nabla v\, dx + \int_{\partial\Omega}uv\, d\sigma = \lambda\int_{\Omega}uv\, dx \},\\
    & Z_j:=Y_j^\perp\cup\operatorname{span}\{\varphi_j\},
\end{aligned}
$$
and define
$$
B_{r,j}:=\{u\in Y_j  : \norm{u}\leq \rho_{r,j}\}, \quad N_{r,j}:=\{u\in Z_j : \norm{u}= \xi_{r,j}\},
$$
where $\rho_{r,j}>\xi_{r,j}>0$, and we set $\rho_{r,j}= \sqrt{\mu\hat{\lambda}_{j}}$. Repeating the argument in Section \ref{secp4},  to prove Theorem \ref{th4.3}, it suffices to show that, for any $j \geq 2$ and any $\varepsilon>0$, there exist $\xi_{\varepsilon,j}>0$ and $r_{\varepsilon,j}>1$, such that, for all $r\geq r_{\varepsilon,j}$ and $\xi_{r,j}=\xi_{\varepsilon,j}$, we have
$$ \widetilde{b_{r,j}}\geq\frac{1}{2}\mu(\hat{\lambda}_{j}-K_2-\frac{K_2^g\hat{\lambda}_j}{\tilde{\lambda}_{1,1,1}})-\frac{K_pC_{p,N}\mu^\frac{p}{2}\hat{\lambda}_{j}^\frac{p}{2}}{p}-\left(\frac{k-1}{k}\right)^\frac{l}{2}\frac{K_lC_{l,\Omega}'\mu^\frac{l}{2}\hat{\lambda}_{j}^\frac{l}{2}}{l}-\varepsilon,$$
and that $\hat{\lambda}_{j}-K_2-\frac{K_2^g\hat{\lambda}_j}{\tilde{\lambda}_{1,1,1}} \to +\infty$ as $j \to +\infty$.

For some $k>1$ and $k\approx +\infty$, define
    $$\xi_{\varepsilon,j}= \sqrt{\frac{k-1}{k}\mu\hat{\lambda}_{j}}.$$
Then, for any $u \in Z_j$ with $\norm{u}=\xi_{\varepsilon,j}$, we have
    $$
     \norm{u}^2_2 \leq \frac{\norm{u}^2}{\hat{\lambda}_{j}} = \frac{\mu(k-1)}{k} \quad \text{and} \quad \norm{u}^2_{L^2(\partial\Omega)}\leq\frac{\norm{u}^2}{\tilde{\lambda}_{1,1,1}} = \frac{\mu\hat{\lambda}_{j}(k-1)}{k\tilde{\lambda}_{1,1,1}}
    $$
and thus, by \eqref{eqgnh1} and \eqref{eqsi}, it follows that
$$
\begin{aligned}
    E_{r,\mu}(u)\geq &\frac{k-1}{k}\frac{\mu\hat{\lambda}_{j}}{2}-\frac{(k-1)}{k}\frac{K_2\mu}{2}-\frac{(k-1)}{k}\frac{K_2^g\mu\hat{\lambda}_j}{2\tilde{\lambda}_{1,1,1}}\\&
    -\left(\frac{k-1}{k}\right)^\frac{p}{2}\frac{K_pC_{p,\Omega}\mu^\frac{p}{2}\hat{\lambda}_{j}^\frac{p}{2}}{p}-\left(\frac{k-1}{k}\right)^\frac{l}{2}\frac{K_lC_{l,\Omega}'\mu^\frac{l}{2}\hat{\lambda}_{j}^\frac{l}{2}}{l}-k\left(\frac{k-1}{k}\right)^r\\
    \geq &\frac{(k-1)\mu}{2k}(\hat{\lambda}_{j}-K_2-\frac{K_2^g\hat{\lambda}_j}{\tilde{\lambda}_{1,1,1}})\\&
    -\left(\frac{k-1}{k}\right)^\frac{p}{2}\frac{K_pC_{p,\Omega}\mu^\frac{p}{2}\hat{\lambda}_{j}^\frac{p}{2}}{p}-\left(\frac{k-1}{k}\right)^\frac{l}{2}\frac{K_lC_{l,\Omega}'\mu^\frac{l}{2}\hat{\lambda}_{j}^\frac{l}{2}}{l}-k\left(\frac{k-1}{k}\right)^r.
\end{aligned}
$$
Therefore, for any $\varepsilon>0$, there exist $\xi_{\varepsilon,j}>0$ and $r_{\varepsilon,j}>1$, such that, for all $r\geq r_{\varepsilon,j}$ and $\xi_{r,j}=\xi_{\varepsilon,j}$, one has
$$
\widetilde{b_{r,j}}\geq\frac{1}{2}\mu(\hat{\lambda}_{j}-K_2-\frac{K_2^g\hat{\lambda}_j}{\tilde{\lambda}_{1,1,1}})-\frac{K_pC_{p,N}\mu^\frac{p}{2}\hat{\lambda}_{j}^\frac{p}{2}}{p}-\left(\frac{k-1}{k}\right)^\frac{l}{2}\frac{K_lC_{l,\Omega}'\mu^\frac{l}{2}\hat{\lambda}_{j}^\frac{l}{2}}{l}-\varepsilon.
$$
By ($f_1'$) and ($g_1$), we know that $\hat{\lambda}_{j}-K_2-\frac{K_2^g\hat{\lambda}_j}{\tilde{\lambda}_{1,1,1}} \to +\infty$ as $j \to +\infty$.
Repeating the argument used in the proof of Theorem \ref{th4} and applying Lemma \ref{lemnonnomass} with $M=\frac{\hat{\lambda}_{k_m}}{2}$, we conclude that, for any \linebreak $0<\mu\leq \min\{\mu^{**}(\Omega,f,g,\frac{\lambda_{k_m}}{2}), \widetilde{\mu_{k_m}}\}$,  the functional $E$ admits at least $m$ critical points $u_{k_1}, u_{k_2},\cdots,u_{k_m}$ constrained on $S_{1,1,1}(\mu)$, which complete the proof of Theorem \ref{th4.3}.

\section{The problem \texorpdfstring{$(P)^\mu_{0,1,0}$}{Lg}}\label{secp010}
In this section, we consider the  problem  below
$$
\left\{
\begin{array}{ll}
	-\Delta u = \lambda u + f(u)\quad & \text{in } \Omega, \\
	\frac{\partial u}{\partial \eta} = 0 & \text{on } \partial \Omega, \\
	\int_{\Omega}u^2\,dx = \mu
\end{array}
\right.
\leqno{(P)^{\mu}_{1,0,0}}
$$
which is equivalent to solve the problem below
$$
\left\{
\begin{array}{ll}
	-\Delta u +u= \lambda u + f(u)\quad & \text{in } \Omega, \\
	\frac{\partial u}{\partial \eta} = 0 & \text{on } \partial \Omega, \\
	\int_{\Omega} u^2\,dx = \mu.
\end{array}
\right.
\leqno{\tilde{(P)}^{\mu}_{1,0,0}}
$$
We define the norm on $H^1(\Omega)$ by
$$
\|u\|:=\left(\int_{\Omega}|\nabla u|^2\,dx+\int_{\Omega}|u|^2\,dx\right)^{\frac{1}{2}}.
$$

Let $E: H^1(\Omega) \to \mathbb{R}$ be the energy functional  associated with problem $(P)^\mu_{0,1,0}$ defined by
$$
E(u) := \frac{1}{2}\norm{u}^{2}-\Psi(u),
$$
where
$$
\Psi(u)=\int_\Omega  F(u)\,dx.
$$
By the Gagliardo-Nirenberg inequality, for any $p \in [2, 2^*)$,
\begin{equation}\label{eqgnn}
	\|u\|_p^p \leq C_{p,\Omega}''\|u\|_2^{(1 - \beta_p)p} \| u\|^{\beta_p p} \quad \forall u \in  H^1(\Omega),
\end{equation}
where $\beta_p = N \left( \frac{1}{2} - \frac{1}{p} \right)$ and $ C_{p,\Omega}'' > 0$ depends only on  $p$ and $\Omega$. Moreover, $H^1(\Omega)$ is compactly embedded in $L^p(\Omega)$ for all $2\leq p<2^*$. Have this in mind, all computations performed in the previous section can be adapted to the present setting with only minor modifications. Assume that  ($f_1''$), ($f_2$) and ($f_3$) hold or ($f_1''$) and ($f_4$) hold. Then, we obtain a solution $(u_{pe},\lambda_{pe})\in H^1(\Omega)\times [-1,-1+\lambda_{0,1,0}]$ to problem $(P)^{\mu}_{1,0,0}$. However,  it remains unknown whether the solution $u_{pe}$ is one of the constant functions $\pm\sqrt{\frac{\mu}{\abs{\Omega}}}$, because $\left(\pm\sqrt{\frac{\mu}{\abs{\Omega}}}, -\frac{f\left(\pm\sqrt{\frac{\mu}{\abs{\Omega}}}\right)}{\pm\sqrt{\frac{\mu}{\abs{\Omega}}}}\right)$ are also solutions to problem $(P)^{\mu}_{1,0,0}$.  An open and interesting question is whether one can distinguish  $u_{pe}$ from the constant functions. Let $f(u)=\abs{u}^{p-2}u$ with $2+\frac{4}{N}<p<2^*$ and $\mu >0$ small. Then, the constant function $\sqrt{\frac{\mu}{\abs{\Omega}}}$ is a local minimizer on $S_{0,1,0}(\mu):=\{u \in H^1(\Omega): \int_{\Omega}\abs{u}^2\, dx=\mu\}$, see \cite[Remark 3.6]{Chang}. Thus, the constant function $\sqrt{\frac{\mu}{\abs{\Omega}}}$ is a solution to equation
$$
-\Delta u=-\left(\frac{\mu}{\abs{\Omega}}\right)^\frac{p-2}{2} u+\abs{u}^{p-2}u
$$ with the Morse index $m\left(\sqrt{\frac{\mu}{\abs{\Omega}}}\right)=1$. In \cite[Remark 3.6]{Chang}, the authors proves problem $(P)^{\mu}_{1,0,0}$ has a solution $(u_{mp},\lambda_{mp})\in H^1(\Omega)\times \R$ and $u$ is a critical point of mountain pass type constrained on $S_{0,1,0}(\mu)$. Then, $u_{mp}$ cannot be a constant function. However, this argument fails when we apply the perturbation method. In fact, repeating the argument of Lemma \ref{lemunr} and \ref{lemur}, we have $\norm{u_{r_n}}^2_2<\mu=\left\Vert\sqrt{\frac{\mu}{\abs{\Omega}}}\right\Vert_2^2$ and $\displaystyle \lim_{n \to +\infty}u_{r_n}=u_{pe}$, but the limit behavior of $u_{r_n}$ remains unknown. Moreover, by a standard argument, see \cite{Chang,Bu}, we will conclude that $u_{pe}$ is a solution to equation
$$
-\Delta u=\lambda_{pe} u+\abs{u}^{p-2}u
$$ with the Morse index $m(u_{pe})=1=m\left(\sqrt{\frac{\mu}{\abs{\Omega}}}\right)$, hence, it is hard to distinguish $u_{pe}$ from the constant functions with a Morse index argument.

Now suppose further that $f(t)=-f(-t)$. Then, following the argument in the proof of Theorem \ref{th4}, we conclude that, for any $m \in \mathbb{N}^+$, there exists $\mu_{m}^{***} > 0$, depending on $\Omega$ and $f$, such that, for any $0 < \mu < \mu^{***}_{m}$,  $(P)^{\mu}_{0,1,0}$ admits at least $m$ nontrivial solutions $(u_1,\bar{\lambda}_1), (u_2,\bar{\lambda}_2),\cdots,(u_m,\bar{\lambda}_m) \in H^{1}(\Omega)\times[0,+\infty)$, and these solutions have distinct energy levels, i.e., $E(u_i) \neq E(u_j)$, for any $1\leq i\neq j \leq m$. Since there exist exactly two constant solutions of the form $u_{\pm c}=\pm\sqrt{\frac{\mu}{\abs{\Omega}}}$, with associated Lagrange multiplier $\lambda_{\pm c}=-\frac{f(u)}{u}$, and since both yield the same energy value, i.e. $$
E(\sqrt{\frac{\mu}{\abs{\Omega}}})=E(-\sqrt{\frac{\mu}{\abs{\Omega}}}),
$$
without loss of generality, we can assume that, for all $1\leq i\leq m-1$, $u_i$ is not a constant function.
\section{Existence of ground states to problem  \texorpdfstring{$(P)^\mu_{1,0,0}$}{Lg}}\label{secgs}

We first characterize the energy of $u$, where $(u,\lambda_u)$ is a solution to problem $(P)^{\mu}_{1,0,0}$ obtained in Theorems \ref{th2} or \ref{th3}.

Define
$$
\mathcal{S}^+(\mu):=\{u \in S_{1,0,0}(\mu):\langle E'(u),\cdot \rangle=\lambda_u (u,\cdot)_2\text{ for some } \lambda_u \geq 0 \},
$$
and
\begin{equation}\label{eqdefn+}
    \mathcal{N}^+(\mu):=\{u \in S_{1,0,0}(\mu):\langle E'(u),u\rangle \geq 0\}.
\end{equation}
\begin{lemma}\label{lemminima}
   Assume that either ($f_1$)-($f_3$) hold or ($f_1$), ($f_2$) and ($f_4$) hold. Let $(u,\lambda_u)$ denote the solution of problem $(P)^{\mu}_{1,0,0}$ obtained in Theorems \ref{th2} or \ref{th3}. Then, $u$ satisfies
    \begin{equation}\label{eqN+=}
        E(u)= \inf_{u \in \mathcal{S}^+(\mu)}E(u)= \inf_{u \in \mathcal{N}^+(\mu)}E(u).
    \end{equation}
\end{lemma}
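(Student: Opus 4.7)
My plan is to prove the two equalities by establishing the chain
\[
 E(u)\;\le\;\inf_{v\in\mathcal{N}^+(\mu)}E(v)\;\le\;\inf_{v\in\mathcal{S}^+(\mu)}E(v)\;\le\;E(u),
\]
and then to deduce that all three quantities coincide. The rightmost inequality is immediate because $u\in\mathcal{S}^+(\mu)$ (recall that the solution provided by Theorems~\ref{th2}/\ref{th3} satisfies $\lambda_u\in[0,\lambda_{1,0,0}]$, so $\lambda_u\ge 0$). The middle inequality is also trivial once one checks that $\mathcal{S}^+(\mu)\subset\mathcal{N}^+(\mu)$: indeed, for any $w\in\mathcal{S}^+(\mu)$ with multiplier $\lambda_w\ge 0$, testing with $w$ itself gives $\langle E'(w),w\rangle=\lambda_w\mu\ge 0$.

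The heart of the proof is the leftmost inequality, for which I would fix an arbitrary $v\in\mathcal{N}^+(\mu)$ and study the fibre map $\varphi(t):=E(tv)$ for $t\in[0,1]$. A direct computation gives
\[
\varphi'(t)=t\|v\|^2-\int_\Omega f(tv)v\,dx=t\Bigl(\|v\|^2-\int_\Omega \tfrac{f(tv)}{tv}\,v^2\,dx\Bigr),
\]
where the integrand is understood as $0$ on $\{v=0\}$. Using condition ($f_2$)---the fact that $s\mapsto f(s)/|s|$ is non-decreasing on each of $(-\infty,0)$ and $(0,\infty)$---I would verify pointwise that for every $x$ the map $t\mapsto f(tv(x))/(tv(x))$ is non-decreasing on $(0,\infty)$ (distinguishing the signs of $v(x)$ exactly as in the standard Nehari-manifold argument). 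Consequently $t\mapsto \varphi'(t)/t$ is non-increasing on $(0,1]$. Since $v\in\mathcal{N}^+(\mu)$ means $\varphi'(1)=\langle E'(v),v\rangle\ge 0$, this monotonicity forces $\varphi'(t)\ge 0$ for all $t\in(0,1]$, so $\varphi$ is non-decreasing on $[0,1]$ and
\[
\sup_{0\le t<1}E(tv)=\lim_{t\to 1^-}E(tv)=E(v).
\]

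Combining this with the upper bound \eqref{eqcssup} for the min-max level, namely $c_\infty\le \sup_{0\le t<1}E(tv)$, and recalling that $E(u)=c_\infty$, I obtain $E(u)\le E(v)$ for every $v\in\mathcal{N}^+(\mu)$, which is exactly the leftmost inequality of the chain. Chaining the three inequalities yields \eqref{eqN+=}. The only step requiring genuine care is the pointwise monotonicity argument for $t\mapsto f(tv)/(tv)$ when $v$ changes sign, but this is a routine consequence of ($f_2$) and is the same trick that drives the fibering analysis on the Nehari manifold.
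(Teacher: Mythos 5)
Your proposal is correct and follows essentially the same route as the paper's proof: both establish the chain of inequalities via the inclusion $\mathcal{S}^+(\mu)\subset\mathcal{N}^+(\mu)$, then prove the key inequality $E(u)\le E(v)$ for $v\in\mathcal{N}^+(\mu)$ by showing via ($f_2$) that the fibre map $t\mapsto E(tv)$ is non-decreasing on $[0,1]$, and conclude with the bound \eqref{eqcssup} together with $E(u)=c_\infty$. Your reformulation of the monotonicity (that $\varphi'(t)/t$ is non-increasing, hence $\varphi'(t)\ge 0$ once $\varphi'(1)\ge 0$) is just a slightly more explicit phrasing of the paper's direct estimate $g'(t)\ge t\langle E'(v),v\rangle\ge 0$.
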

\begin{proof}
Let $(u,\lambda_u)$ be a solution to problem $(P)^{\mu}_{1,0,0}$ obtained in Theorem \ref{th2} and \ref{th3}. It is clear that $u \in \mathcal{S}^+(\mu)$ and $\mathcal{S}^+(\mu)\subset \mathcal{N}^+(\mu)$, hence,
\begin{equation}\label{eqN+1}
E(u)\geq \inf_{u \in \mathcal{S}^+(\mu)}E(u)\geq \inf_{u \in \mathcal{N}^+(\mu)}E(u).
\end{equation}
We now prove
\begin{equation}\label{eqN+2}
    E(u) \leq  \inf_{u \in \mathcal{N}^+(\mu)}E(u).
\end{equation}
 For any $v \in \mathcal{N}^+(\mu)$, define
 $$
 g(t):= E(tv),\quad t\geq 0.
 $$
Then, by (f$_2$), for all $0\leq t \leq 1$, we have
$$
g'(tv) = t \norm{v}^2-\int_\Omega f(tv)v\,dx \geq  t\left(\norm{v}^2-\int_\Omega f(v)v\,dx\right)=t\langle E'(v),v\rangle\geq 0,
$$
which implies $g(t)$ is non-decreasing on $[0,1]$.
By \eqref{eqcssup}, we conclude that
$$
E(u)=c_\infty\leq \sup _{0 \leq t<1} E(t v) =\sup _{0 \leq t<1} g(t) = \max _{0 \leq t\leq 1}g(t)=g(1)=E(v).
$$
This proves \eqref{eqN+2}. Thus, by \eqref{eqN+1}, we obtain
\begin{equation*}
    c_\infty=E(u)= \inf_{u \in \mathcal{S}^+(\mu)}E(u)= \inf_{u \in \mathcal{N}^+(\mu)}E(u).
\end{equation*}
\end{proof}

We note that \eqref{eqN+=} implies that $u$ is a minimizer of $E$ constrained on $\mathcal{N}^+(\mu)$,  and that $(u,\lambda_u)$ is a solution to  problem $(P)^{\mu}_{1,0,0}$ such that its energy $E(u)$ is minimal among all the solutions $(v,\lambda_v)$ of problem $(P)^{\mu}_{1,0,0}$ with $\lambda_v \geq 0$. Moreover, this result can be easily extended to all other cases mentioned in this paper.


Define
$$
\mathcal{S}(\mu):=\{u \in S_{1,0,0}(\mu):\langle E'(u),\cdot \rangle=\lambda_u (u,\cdot)_2\text{ for some } \lambda_u \in \R \}.
$$
\begin{definition}\label{defgs}
     We say that $(u, \lambda_u)$ is a normalized ground state solution to problem $(P)^{\mu}_{1,0,0}$, if $u \in \mathcal{S}(\mu)$ such that
     $$E(u) = \inf_{u \in \mathcal{S}(\mu)}E(u).$$
\end{definition}
Next, we present our main result of this section, which concerns the existence of ground states to problem $(P)^{\mu}_{1,0,0}$.
\begin{theorem}\label{thgs}
    Let $\Omega\subset \mathbb{R}^N(N \geq 3)$ be a smooth bounded  domain,  and let $\Omega$ be star-shaped with respect to the origin. Suppose that ($f_1$), ($f_2$) and ($f_4$) hold with $q > 2+\frac{4}{N}$.
If $0<\mu \leq \mu^*_s$, where $\mu^*_s$ is defined in \eqref{eqmus}, then $(P)^{\mu}_{1,0,0}$ has a ground state solution $(u,\lambda_u) \in H^1_0(\Omega) \times \left[\frac{2\lambda_{1,0,0}(q-2^*)}{2^*(q-2-\frac{4}{N})},\lambda_{1,0,0}\right]$.
\end{theorem}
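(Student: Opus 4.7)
The plan is to combine the perturbation solution from Theorem \ref{th3} with a direct minimization argument, using the Pohozaev identity for star-shaped domains to control the Lagrange multiplier.

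First, I would set $c^{*} := \inf_{v\in\mathcal{S}(\mu)} E(v)$. Provided $\mu \leq \mu_s^{*}$ is chosen small enough to invoke Theorem \ref{th3} (this determines the explicit threshold $\mu_s^{*}$ in \eqref{eqmus}), the solution $(u_1,\lambda_{u_1})$ produced there satisfies $E(u_1) \leq \mu\lambda_{1,0,0}/2$, so $\mathcal{S}(\mu)\neq\emptyset$ and $c^{*} \leq \mu\lambda_{1,0,0}/2$. The goal is then to show $c^{*}$ is attained by some $(u,\lambda_u)$ with the claimed range for $\lambda_u$.

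Second, I would pick a minimizing sequence $\{v_n\}\subset\mathcal{S}(\mu)$ with Lagrange multipliers $\{\lambda_n\}$, and extract two a priori bounds. Testing the equation against $v_n$ and using ($f_4$) gives
\begin{equation*}
\|v_n\|^{2} \leq \frac{2qE(v_n) - 2\lambda_n\mu}{q-2}.
\end{equation*}
On the other hand, the classical Pohozaev identity for $-\Delta v_n = \lambda_n v_n + f(v_n)$ with $v_n|_{\partial\Omega}=0$, combined with $x\cdot\eta\geq 0$ on $\partial\Omega$ (star-shapedness) and once more with ($f_4$), yields
\begin{equation*}
-\lambda_n\mu \leq \frac{2N-q(N-2)}{N(q-2)}\,\|v_n\|^{2},
\end{equation*}
where both factors on the right are positive because $2+\frac{4}{N} < q < 2^{*}$. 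Plugging this into the first estimate and collecting terms with $\|v_n\|^{2}$ produces a self-consistent inequality that simplifies to $\|v_n\|^{2} \leq \frac{2N(q-2)}{Nq-2N-4}\,E(v_n)$. This bounds $\{v_n\}$ in $H^1_0(\Omega)$, and then $\{\lambda_n\}$ is bounded by the Pohozaev estimate.

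Third, I would pass to the limit: up to a subsequence $v_n\rightharpoonup u$ in $H^1_0(\Omega)$ and $\lambda_n\to\lambda_u$. Since the embedding $H^1_0(\Omega)\hookrightarrow L^p(\Omega)$ is compact for $p\in[2,2^{*})$, condition ($f_1$) gives $\int_\Omega f(v_n)v_n\,dx\to\int_\Omega f(u)u\,dx$, and $\|v_n\|_2^{2}\to\|u\|_2^{2}=\mu$, so $u\in\mathcal{S}(\mu)$. From the equation, $\|v_n\|^{2} = \lambda_n\mu + \int_\Omega f(v_n)v_n\,dx \to \lambda_u\mu + \int_\Omega f(u)u\,dx = \|u\|^{2}$, giving strong convergence in $H^1_0(\Omega)$. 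Hence $E(u)=c^{*}$ and $(u,\lambda_u)$ is a ground state. For the range of $\lambda_u$, the upper bound $\lambda_u\leq\lambda_{1,0,0}$ comes from combining $E(u)\leq\mu\lambda_{1,0,0}/2$, ($f_4$), and Poincar\'e $\|u\|^{2}\geq\lambda_{1,0,0}\mu$. The lower bound follows by inserting $\|u\|^{2}\leq\frac{2N(q-2)}{Nq-2N-4}E(u)\leq\frac{N(q-2)\mu\lambda_{1,0,0}}{Nq-2N-4}$ into the Pohozaev estimate and simplifying, which yields exactly $\lambda_u\geq\frac{\lambda_{1,0,0}(q(N-2)-2N)}{Nq-2N-4}=\frac{2\lambda_{1,0,0}(q-2^{*})}{2^{*}(q-2-4/N)}$.

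The main obstacle is securing the $\lambda_n$ bound via Pohozaev; this is precisely where the star-shapedness of $\Omega$ (making the boundary term $\frac{1}{2}\int_{\partial\Omega}|\nabla v_n|^{2}(x\cdot\eta)\,d\sigma$ non-negative) and the subcriticality $q<2^{*}$ (ensuring $2N-q(N-2)>0$) are indispensable. Without either hypothesis, the two coupled estimates no longer close up to give joint control of $\|v_n\|$ and $\lambda_n$, and the whole minimization scheme collapses.
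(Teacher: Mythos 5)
Your core scheme --- direct minimization over $\mathcal{S}(\mu)$, with the coupled $(f_4)$/Pohozaev estimates closing up to give $\|v_n\|^2 \leq \frac{2N(q-2)}{Nq-2N-4}E(v_n)$ and a two-sided bound on $\lambda_n$, followed by compactness --- is correct, and it is genuinely different from the paper's proof. The paper never minimizes over the solution set: it fixes $s>0$ equal to (the modulus of) the predicted lower bound for the multiplier, runs the whole Theorem \ref{th3} machinery on the shifted functional $E_s(u)=E(u)+\frac{s}{2}\|u\|_2^2$, and then identifies the resulting solution as a ground state by combining the $(f_2)$-based variational characterization of Lemma \ref{lemminima} (minimality over $\mathcal{S}_s^+(\mu)$) with Lemma \ref{lemlambdau} (any solution with $E\le \mu\lambda_{1,0,0}/2$ has multiplier above that lower bound). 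Your route replaces this abstract identification by a compactness argument along a minimizing sequence and, notably, never uses $(f_2)$; your a priori estimates are exactly the content of Lemma \ref{lemlambdau}, just run along the sequence.

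There is, however, a genuine gap in your first step: you assert that $\mu\le\mu_s^*$ is ``small enough to invoke Theorem \ref{th3}'' and that this is what determines \eqref{eqmus}. It is not. The threshold of Theorem \ref{th3} is $\mu_{\mathrm{Th3}}=\bigl(\frac{\lambda_{1,0,0}-K_2}{K_pC_{p,N}}\bigr)^{2/(p-2)}\bigl(\frac{q}{q-2}\bigr)^{2/(p-2)-N/2}\lambda_{1,0,0}^{-N/2}$, whereas \eqref{eqmus} is the same expression with $\lambda_{1,0,0}$ replaced by $\lambda_{1,0,0}+s$; it arises because the paper applies the perturbation argument to the operator $-\Delta+s$, whose first eigenvalue is $\lambda_{1,0,0}+s$. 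The ratio $\mu_s^*/\mu_{\mathrm{Th3}}=\bigl(\frac{\lambda_{1,0,0}+s-K_2}{\lambda_{1,0,0}-K_2}\bigr)^{2/(p-2)}\bigl(\frac{\lambda_{1,0,0}}{\lambda_{1,0,0}+s}\bigr)^{N/2}$ is not $\le 1$ in general: since $s$ does not depend on $K_2$, it tends to $+\infty$ as $K_2\uparrow\lambda_{1,0,0}$, so there are admissible nonlinearities with $\mu_s^*>\mu_{\mathrm{Th3}}$. For $\mu\in(\mu_{\mathrm{Th3}},\mu_s^*]$ your argument then provides neither $\mathcal{S}(\mu)\neq\emptyset$ nor the bound $c^*\le\mu\lambda_{1,0,0}/2$, and the latter is also what your multiplier estimates for the limit $u$ require. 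The missing idea is precisely the paper's shift: run the Theorem \ref{th3} argument with the equivalent norm $\|u\|_s^2=\|\nabla u\|_2^2+s\|u\|_2^2$; condition $(f_1)$ still applies because $K_2<\lambda_{1,0,0}<\lambda_{1,0,0}+s$, the resulting threshold is exactly \eqref{eqmus}, and it produces a solution with $E(u)\le\mu\lambda_{1,0,0}/2$ and $\lambda_u\ge -s$. With that replacement for your Step 1, the rest of your minimization argument goes through verbatim.
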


The following lemma plays a key role in proving the existence of ground states for problem $(P)^{\mu}_{1,0,0}$.
\begin{lemma}\label{lemlambdau}
Suppose that ($f_1$), ($f_2$) and ($f_4$) hold with $q > 2+\frac{4}{N}$. For all $u \in \mathcal{S}(\mu)$, when $E(u) \leq \frac{\mu\lambda_{1,0,0}}{2}$, we have $$\frac{N\lambda_{1,0,0}(q-2^*)}{2^*(q-2)} \leq \lambda_u \leq \lambda_{1,0,0}.$$
\end{lemma}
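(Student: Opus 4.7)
The plan is to prove both bounds by combining three identities that every $u\in\mathcal{S}(\mu)$ satisfies---the Nehari identity from testing the equation against $u$, the Pohoz\v{a}ev identity from testing against $x\cdot\nabla u$, and the defining relation of $E(u)$---together with the condition $(f_4)$, the sign condition $(f_2)$, the star-shapedness of $\Omega$, and Poincar\'e's inequality $\|u\|^2\geq \lambda_{1,0,0}\mu$.

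For the upper bound, note that $(f_2)$ gives $\int_{\Omega}f(u)u\,dx\geq 0$. Applying $(f_4)$ to estimate $\int F\leq \tfrac{1}{q}\int f(u)u\,dx$, and using the Nehari identity $\|u\|^{2}=\lambda_{u}\mu+\int f(u)u\,dx$, I would write
\[
\frac{\mu\lambda_{1,0,0}}{2}\;\geq\;E(u)\;=\;\frac{1}{2}\|u\|^{2}-\int_{\Omega}F(u)\,dx\;\geq\;\frac{\lambda_{u}\mu}{2}+\frac{q-2}{2q}\int_{\Omega}f(u)u\,dx\;\geq\;\frac{\lambda_{u}\mu}{2},
\]
which immediately yields $\lambda_{u}\leq \lambda_{1,0,0}$. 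A byproduct of the same computation is the estimate $\int_{\Omega}f(u)u\,dx\leq \frac{q\mu(\lambda_{1,0,0}-\lambda_{u})}{q-2}$, which will be used again below.

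For the lower bound, the Pohoz\v{a}ev identity for $u\in H_{0}^{1}(\Omega)$ solving $-\Delta u=\lambda_{u}u+f(u)$ reads
\[
\frac{N-2}{2}\|u\|^{2}+\frac{1}{2}\int_{\partial\Omega}(x\cdot\eta)\left|\frac{\partial u}{\partial \eta}\right|^{2}d\sigma\;=\;\frac{N\lambda_{u}\mu}{2}+N\int_{\Omega}F(u)\,dx.
\]
Since $\Omega$ is star-shaped with respect to the origin, $x\cdot\eta\geq 0$ on $\partial\Omega$, so the boundary integral is non-negative and can be dropped to obtain $\frac{1}{2^{\ast}}\|u\|^{2}\leq \frac{\lambda_{u}\mu}{2}+\int_{\Omega}F(u)\,dx$. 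Combining this lower bound on $\int F$ with the upper bound $\int F\leq \tfrac{1}{q}\int f(u)u\,dx$ from $(f_4)$, and eliminating $\int f(u)u\,dx$ via Nehari, gives (after rearrangement)
\[
\lambda_{u}\mu\;\geq\;\frac{2(q-2^{\ast})}{2^{\ast}(q-2)}\,\|u\|^{2}.
\]
Plugging the upper bound $\|u\|^{2}\leq \frac{\mu(q\lambda_{1,0,0}-2\lambda_{u})}{q-2}$ from the first part into this inequality yields a scalar inequality that is linear in $\lambda_{u}$ on both sides; isolating $\lambda_{u}$ produces the claimed lower bound.

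The main obstacle is a sign issue created by $q<2^{\ast}$: the Pohoz\v{a}ev-plus-$(f_4)$ inequality controls $\lambda_{u}\mu$ from below by a \emph{negative} multiple of $\|u\|^{2}$, so the Poincar\'e lower bound $\|u\|^{2}\geq \lambda_{1,0,0}\mu$ goes the wrong way and is useless by itself. The key trick is to use the energy hypothesis $E(u)\leq \mu\lambda_{1,0,0}/2$ together with $(f_4)$ to produce an a-priori \emph{upper} bound on $\|u\|^{2}$ in terms of $\lambda_{u}$, after which the two inequalities close into a single linear inequality for $\lambda_{u}$; the hypothesis $q>2+4/N$ is precisely what ensures the resulting coefficient of $\lambda_{u}$ is positive so that dividing preserves the direction of the inequality.
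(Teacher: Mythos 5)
Your proposal is correct and follows essentially the same route as the paper: the Nehari identity plus $(f_4)$ and the energy hypothesis give $\lambda_u\le\lambda_{1,0,0}$ together with the byproduct $\int_\Omega f(u)u\,dx\le\frac{q\mu(\lambda_{1,0,0}-\lambda_u)}{q-2}$, and the Pohoz\v{a}ev identity with star-shapedness, combined with $(f_4)$ and Nehari, closes into a linear inequality for $\lambda_u$ whose coefficient is positive precisely because $q>2+\frac{4}{N}$; your elimination of $\int_\Omega f(u)u\,dx$ in favor of $\|u\|^2$ is algebraically equivalent to the paper's bookkeeping, which retains $\int_\Omega f(u)u\,dx$ and inserts the byproduct bound directly. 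The only caveat is that isolating $\lambda_u$ yields $\lambda_u\ge\frac{2\lambda_{1,0,0}(q-2^*)}{2^*\left(q-2-\frac{4}{N}\right)}$ (the constant appearing at the end of the paper's proof and in Theorem \ref{thgs}) rather than the constant $\frac{N\lambda_{1,0,0}(q-2^*)}{2^*(q-2)}$ displayed in the lemma statement, but this discrepancy is present in the paper's own proof as well, so your argument matches it exactly.
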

\begin{proof}
    Since $u \in \mathcal{S}(\mu)$, we have
    \begin{equation}\label{equins}
            \norm{\nabla u}^2_2=\lambda_u\norm{u}^2_2 + \int_{\Omega}f(u)u\, dx=\mu\lambda_u + \int_{\Omega}f(u)u\, dx.
    \end{equation}
    By $E(u) \leq \frac{\mu\lambda_{1,0,0}}{2}$ and ($f_4$), we obtain
    $$
    \begin{aligned}
            \frac{\mu\lambda_{1,0,0}}{2} \geq E(u)&=\frac{1}{2}\norm{\nabla u}^2_2- \int_{\Omega}F(u)\, dx\\&=\frac{\mu\lambda_u}{2} + \frac{1}{2}\int_{\Omega}f(u)u\, dx- \int_{\Omega}F(u)\, dx \\&\geq \frac{\mu\lambda_u}{2} + (\frac{1}{2}-\frac{1}{q})\int_{\Omega}f(u)u\, dx,
    \end{aligned}
    $$
hence, $\lambda_u \leq \lambda_1$ and
\begin{equation}\label{eqfuu}
    \int_{\Omega}f(u)u\, dx \leq \frac{q\mu  (\lambda_{1,0,0}-\lambda_u)}{q-2}.
\end{equation}
By the Pohoz\v{a}ev identity, see, e.g., \cite[Theorem B.1]{W}, we have
$$
\frac{N-2}{2N} \int_{\Omega}\abs{\nabla u}^2\, dx+ \frac{1}{2N}\int_{\partial\Omega}\abs{\nabla u}^2(x\cdot\bm{n})\, d\sigma = \frac{\lambda_u}{2}\int_{\Omega}\abs{u}^2\, dx + \int_{\Omega}F(u)\, dx,
$$
where $\bm{n}(x)$ denotes the outward unit normal vector at $x \in \partial\Omega$. Since $\Omega$ be star-shaped with respect to the origin, we have $x\cdot \bm{n}(x)>0$. Thus,
$$
\frac{N-2}{2N} \int_{\Omega}\abs{\nabla u}^2\, dx\leq  \frac{\lambda_u}{2}\int_{\Omega}\abs{u}^2\, dx + \int_{\Omega}F(u)\, dx=\frac{\mu\lambda_u}{2} + \int_{\Omega}F(u)\, dx.
$$
By \eqref{equins}, we obtain
$$
\frac{\mu\lambda_u}{2}+  \int_{\Omega}F(u)\, dx \geq \frac{(N-2)\mu\lambda_u}{2N} + \frac{N-2}{2N} \int_{\Omega}f(u)u\, dx.
$$
Therefore, by \eqref{eqfuu} and ($f_4$),
$$
\lambda_u\geq  \frac{N}{\mu}\int_{\Omega}\left(\frac{1}{2^*}f(u)u-F(u)\right)\, dx \geq \frac{N}{\mu}\left(\frac{1}{2^*}-\frac{1}{q}\right)\int_{\Omega}f(u)u\, dx \geq \frac{N(\lambda_{1,0,0}-\lambda_u)(q-2^*)}{2^*(q-2)},
$$
hence, since $q > 2+\frac{4}{N}$, we conclude that
$$
\lambda_u \geq \frac{2\lambda_{1,0,0}(q-2^*)}{2^*(q-2-\frac{4}{N})}.
$$
\end{proof}
It is the position to provide the proof of Theorem \ref{thgs}.
\begin{proof}[Proof of Theorem \ref{thgs}]
Set $s := \frac{2\lambda_{1,0,0}(q-2^*)}{2^*(q-2-\frac{4}{N})}>0$. Then, we can define an equivalent norm on $H_0^1(\Omega)$ by
$$
\|u\|_s:=\left(\int_{\Omega}|\nabla u|^2\,dx+s \int_{ \Omega}|u|^2\,dx\right)^{\frac{1}{2}}.
$$ Define the energy functional $E_s : H_0^1(\Omega)\to \R$ by
$$
E_s(u):=\frac{1}{2}\norm{\nabla u}_2^2 +\frac{s}{2}\norm{u}_2^2-\int_{\Omega}F(u)\, dx.
$$
From ($f_4$),  we know that for any $R>0$ such that $F(R)>0$, one gets
$$
\quad F(t)\geq \frac{F(R)}{R^q} \abs{t}^q, \quad \forall t\in (-\infty,-R)\cup(R,+\infty),
$$
which implies $p \geq q >2+\frac{4}{N}$. Set
\begin{equation}\label{eqmus}
    \mu^*_s:=
    \left(\frac{\lambda_{1,0,0}+s-K_2}{K_pC_{p,N}}\right)^{\frac{2}{p-2}}\left(\frac{q}{q-2}\right)^{{\frac{2}{p-2}}-\frac{N}{2}}(\lambda_{1,0,0}+s)^{-\frac{N}{2}}.
\end{equation}
Note that, for all $u \in H_0^1(\Omega)$, we have $\norm{u}^2_s \geq (\lambda_{1,0,0}+s)\norm{u}^2_2$ and $\norm{u}^2_s \geq \norm{\nabla u}^2_2$.
By repeating the argument in the proof of Theorem \ref{th3}, we conclude that, for any $0<\mu\leq \mu^*_s$, there exists $u\in H_0^1(\Omega) $ satisfying $E_s(u) \leq \frac{\mu(\lambda_{1,0,0}+s)}{2}$ and $u$ is a critical point of $E_s$ constrained on $S_{1,0,0}(\mu)$ with a Lagrange multiplier $\lambda_{s,u} \in [0,\lambda_{1,0,0}+s]$, i.e., $u$ satisfies $E(u) \leq \frac{\mu\lambda_{1,0,0}}{2}$ and $u \in \mathcal{S}(\mu)$ with $\lambda_u \in [-s,\lambda_{1,0,0}]$.

By repeating the argument in Lemma \ref{lemminima}, we know that $u$ satisfies
$$E(u)= \inf_{u \in \mathcal{S}_s^+(\mu)}E(u),$$
where
$$
\begin{aligned}
    \mathcal{S}_s^+(\mu):=&\{u \in S_{1,0,0}(\mu):\langle E_s'(u),\cdot \rangle=\lambda_{s,u} (u,\cdot)_2\text{ for some } \lambda_{s,u} \geq 0 \} \\=& \{u \in S_{1,0,0}(\mu):\langle E'(u),\cdot \rangle=(\lambda_{s,u}-s) (u,\cdot)_2\text{ for some } \lambda_{s,u} \geq 0 \}
    \\=&\{u \in S_{1,0,0}(\mu):\langle E'(u),\cdot \rangle=\lambda_u (u,\cdot)_2\text{ for some } \lambda_{u} \geq -s \}.
\end{aligned}
$$
It is clear that $\displaystyle \inf_{u \in \mathcal{S}(\mu)}E(u)\leq E(u)\leq \frac{\mu\lambda_{1,0,0}}{2}$. Therefore, by Lemma \ref{lemlambdau}, we conclude that
$$
\inf_{u \in\mathcal{S}_s^+(\mu)}E(u) =E(u)\geq \inf_{u \in \mathcal{S}(\mu)}E(u)=\inf_{\begin{subarray}{c}u \in \mathcal{S}(\mu),\\E(u)\leq \frac{\mu\lambda_{1,0,0}}{2}\end{subarray}}E(u)\geq \inf_{u \in\mathcal{S}_s^+(\mu)}E(u).
$$
This completes the proof of Theorem \ref{thgs}.
\end{proof}
Let $s' > \frac{2\lambda_{1,0,0}(q-2^*)}{2^*(q-2-\frac{4}{N})}>0$ and
$$
\mu^*_{s'}:=
    \left(\frac{\lambda_{1,0,0}+s'-K_2}{K_pC_{p,N}}\right)^{\frac{2}{p-2}}\left(\frac{q}{q-2}\right)^{{\frac{2}{p-2}}-\frac{N}{2}}(\lambda_{1,0,0}+s')^{-\frac{N}{2}}.
$$
 Then, by repeating the argument in Theorem \ref{thgs} and Lemma \ref{lemminima}, for $0<\mu\leq\mu_{s'}^*$, we know that $(P)^{\mu}_{1,0,0}$ has a ground state solution $(u_{s'},\lambda_{u_{s'}}) \in H^1_0(\Omega) \times \left[-s',\lambda_{1,0,0}\right]$ and $u$ satisfies
 $$E(u)= \inf_{u \in \mathcal{N}_{s'}^+(\mu)}E(u),$$
where
$$
    \mathcal{N}_{s'}^+(\mu)=\{u \in S_{1,0,0}(\mu):\langle E'(u),u \rangle\geq -s'(u,u)_2=-s'\mu\}.
$$
By Lemma \ref{lemlambdau}, we conclude that $\lambda_{u_{s'}}>-s'$, hence $u_{s'}$ is a minimizer of $E$ constrained on $$ \tilde{\mathcal{N}}_{s'}^+(\mu):=\{u \in S_{1,0,0}(\mu):\langle E'(u),u \rangle>-s'\mu\},$$
which implies that $u_{s'}$ is a local minimizer of $E$ constrained on $S_{1,0,0}(\mu)$.

By repeating the argument in Corollary \ref{co1}, we have
\begin{corollary}\label{co2}
Let $\Omega\subset \mathbb{R}^N(N \geq 3)$ be a smooth bounded  domain, and let $\Omega$ be star-shaped with respect to the origin. For any fixed $\mu>0$, the following two existence results depending on $\Omega$ hold:
  \begin{enumerate}[label=\rm(\roman*)]
    \item Suppose that ($f_1$), ($f_2$) and ($f_4$) hold with $q > 2+\frac{4}{N}$, and suppose that $K_2=0$ and $2+\frac{4}{N}<p<2^*$. If $\Omega$ satisfies $0<\lambda_{1,0,0}\leq \frac{q-2}{q}\left(1+\frac{2(q-2^*)}{2^*(q-2-\frac{4}{N})}\right)^{-1}\mu^{\frac{2(p-2)}{4+2N-Np}}(K_pC_{p,N})^
    \frac{4}{4+2N-Np}$, then $(P)^{\mu}_{1,0,0}$ has a ground state solution $(u,\lambda) \in H^1_0(\Omega)  \times \left[\frac{2\lambda_{1,0,0}(q-2^*)}{\cdot2^*(q-2-\frac{4}{N})},\lambda_{1,0,0}\right]$.
    \item Suppose that ($f_2$) and ($f_4$) hold with $q > 2+\frac{4}{N}$, and suppose $|f(t)|\leq a\left(\abs{t}^{p'-1} +\abs{t}^{p-1}\right)$ with $2+\frac{4}{N}<p'<p<2^*$ for some positive constant $a>0$. There exists $\lambda^*$ depending on $a$, $p$, $p'$, $q$, $N$ and $\mu$ such that, if $\Omega$ satisfies $0<\lambda_{1,0,0}<\lambda^*$, then $(P)^{\mu}_{1,0,0}$  has a ground state solution $(u,\lambda) \in H^1_0(\Omega)  \times \left[\frac{(q-2^*)\lambda_{1,0,0}}{2\cdot2^*(q-2-\frac{4}{N})},\lambda_{1,0,0}\right]$.
\end{enumerate}
\end{corollary}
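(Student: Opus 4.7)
The plan is to reduce both (i) and (ii) to Theorem \ref{thgs} in exactly the same manner that Corollary \ref{co1} was reduced to Theorem \ref{th3}: one rewrites the constraint $\mu\le\mu_s^*$ from \eqref{eqmus} as an equivalent upper bound on $\lambda_{1,0,0}$. With the choice $s=\frac{2\lambda_{1,0,0}(q-2^*)}{2^*(q-2-4/N)}$ fixed in Theorem \ref{thgs}, set $A:=1+\frac{2(q-2^*)}{2^*(q-2-4/N)}$ so that $\lambda_{1,0,0}+s=A\lambda_{1,0,0}$. The hypothesis $p>2+4/N$ makes the exponent $\frac{2}{p-2}-\frac{N}{2}=\frac{4+2N-Np}{2(p-2)}$ strictly negative, which is the key structural ingredient of the rearrangement.

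For (i), substituting $K_2=0$ into \eqref{eqmus} yields
$$\mu_s^*=(K_pC_{p,N})^{-\frac{2}{p-2}}\Bigl(\tfrac{q}{q-2}\Bigr)^{\frac{4+2N-Np}{2(p-2)}}(A\lambda_{1,0,0})^{\frac{4+2N-Np}{2(p-2)}}.$$
Raising $\mu\le\mu_s^*$ to the negative power $\frac{2(p-2)}{4+2N-Np}$ flips the inequality, and a direct rearrangement produces exactly the stated upper bound on $\lambda_{1,0,0}$; Theorem \ref{thgs} then supplies the ground state, together with the built-in lower bound $\lambda_u\ge\frac{2\lambda_{1,0,0}(q-2^*)}{2^*(q-2-4/N)}$ coming from Lemma \ref{lemlambdau}.

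For (ii), I first absorb the lower-order piece of $f$ into a linear term, mirroring Corollary \ref{co1}(ii): using
$$a|t|^{p'-1}\le \tfrac{A\lambda_{1,0,0}}{2}\,|t|+a\Bigl(\tfrac{A\lambda_{1,0,0}}{2a}\Bigr)^{(p'-p)/(p'-2)}|t|^{p-1},$$
the hypothesis on $f$ becomes $(f_1)$ with $K_2=A\lambda_{1,0,0}/2<A\lambda_{1,0,0}$ and $K_p=a+a\bigl(\tfrac{A\lambda_{1,0,0}}{2a}\bigr)^{(p'-p)/(p'-2)}$. Plugging these constants into \eqref{eqmus}, the factor $A\lambda_{1,0,0}-K_2=A\lambda_{1,0,0}/2$ combines with $K_p$ to give
$$\mu_s^*=\frac{(A\lambda_{1,0,0})^{(4+2N-Np)/[2(p-2)]}}{(2K_pC_{p,N})^{2/(p-2)}}\Bigl(\tfrac{q}{q-2}\Bigr)^{(4+2N-Np)/[2(p-2)]}.$$
As $\lambda_{1,0,0}\to 0^+$, $K_p$ behaves like $\lambda_{1,0,0}^{(p'-p)/(p'-2)}$; a short bookkeeping of exponents reveals that the overall power of $\lambda_{1,0,0}$ in $\mu_s^*$ equals $\frac{2}{p'-2}-\frac{N}{2}$, strictly negative because $p'>2+4/N$. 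Hence $\mu_s^*\to +\infty$, and for the prescribed $\mu$ we can choose $\lambda^*=\lambda^*(a,p,p',q,N,\mu)$ so that $\lambda_{1,0,0}<\lambda^*$ forces $\mu\le\mu_s^*$; Theorem \ref{thgs} then produces the ground state, whose Lagrange multiplier automatically satisfies the (slightly stronger than stated) lower bound from Lemma \ref{lemlambdau}.

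The only step that needs any genuine work is the algebraic rearrangement that turns $\mu\le\mu_s^*$ into an upper bound on $\lambda_{1,0,0}$, and the verification in (ii) that the net exponent of $\lambda_{1,0,0}$ in $\mu_s^*$ is negative; both are routine once the constants from Corollary \ref{co1} are reused. No essentially new analytic obstacle arises, since compactness, the truncated mountain-pass scheme, and the Poho\v{z}aev-based lower bound on $\lambda_u$ are already encapsulated in Theorem \ref{thgs} and Lemma \ref{lemlambdau}.
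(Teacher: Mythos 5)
Your reduction is exactly the paper's own proof: the paper disposes of Corollary \ref{co2} with the single line ``by repeating the argument in Corollary \ref{co1}'', i.e.\ one rearranges the threshold $\mu\le\mu^*_s$ of \eqref{eqmus} into an upper bound on $\lambda_{1,0,0}$ and invokes Theorem \ref{thgs}, which is precisely what you do. Your algebra in (i) (raising to the negative power $\frac{2(p-2)}{4+2N-Np}$) reproduces the stated constant, and your bookkeeping in (ii) (absorption of $a|t|^{p'-1}$, net exponent $\frac{2}{p'-2}-\frac{N}{2}<0$, hence $\mu^*_s\to+\infty$ as $\lambda_{1,0,0}\to0^+$) is correct.

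There is, however, one point you should not gloss over, even though it originates in the paper rather than in your argument: under ($f_4$) together with the growth hypotheses of (i) or (ii), $f\not\equiv 0$ forces $q\le p<2^*$, so $q-2^*<0$ and the quantity $\frac{2(q-2^*)}{2^*(q-2-4/N)}$ is \emph{negative}. Hence the paper's normalization ``$s:=\frac{2\lambda_{1,0,0}(q-2^*)}{2^*(q-2-4/N)}>0$'' in Theorem \ref{thgs} is sign-inconsistent, and your $A=1+\frac{2(q-2^*)}{2^*(q-2-4/N)}$ inherits this: for the ground-state argument to work, the set $\mathcal{S}_s^+(\mu)=\{u:\lambda_u\ge-s\}$ must contain every solution with $E\le\frac{\mu\lambda_{1,0,0}}{2}$, whose multiplier Lemma \ref{lemlambdau} bounds below only by the negative number $\frac{2\lambda_{1,0,0}(q-2^*)}{2^*(q-2-4/N)}$; this requires $s=\frac{2\lambda_{1,0,0}(2^*-q)}{2^*(q-2-4/N)}>0$, whence $\lambda_{1,0,0}+s=(2-A)\lambda_{1,0,0}$, not $A\lambda_{1,0,0}$, and the factor in (i) should read $\bigl(1+\frac{2(2^*-q)}{2^*(q-2-4/N)}\bigr)^{-1}$. (With the literal $s<0$ the scheme can fail outright: for $N=3$ and $\frac{10}{3}<q\le 4$ one has $A\le 0$, so $\|\cdot\|_s$ is not even a norm and the hypothesis of (i) is vacuous.) The same sign slip surfaces in your remark in (ii) that Lemma \ref{lemlambdau}'s bound is ``slightly stronger than stated'': since both quantities are negative, that lemma's bound is in fact the \emph{weaker} one, and the lower endpoints printed in the corollary (the stray $\cdot$ in (i), the $2\cdot 2^*$ in (ii)) are typos for it. None of this changes the structure of your reduction — it is the paper's — but a careful write-up must fix the sign of $s$ and propagate the factor $2-A$ through the constants.
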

An open and interesting question is whether one can distinguish the ground state solution from the solution obtained in Theorem \ref{th3}.

\section{Further Applications}\label{secfinalremarks}
In this section, we highlight further applications of our perturbation method, including the nonlinear Schr\"{o}dinger equations with critical exponential growth in $\mathbb{R}^{2}$, the nonlinear Schr\"{o}dinger equations with magnetic fields, the biharmonic equations, and the Choquard  equations, and others. Since the proofs of these results are similar to those presented earlier,  we only state the main conclusions here and omit the detailed proofs.
\subsection{The nonlinear Schr\"{o}dinger equation with exponential critical growth in \texorpdfstring{$\Omega\subset\mathbb{R}^2$}{Lg};} \mbox{}\\
Assume that $f$ is a continuous function with  exponential critical growth. That is,
there exists $\alpha_0>0$ such that
\begin{itemize}
	\item[$(f_1)$] $\displaystyle \lim_{\abs{t} \to \infty}\frac{\abs{f(t)}}{e^{\xi t^2}}=0, \,\,\text{for all}\,\, \xi >\alpha_0 $ \quad
	and \quad $\displaystyle \lim_{\abs{t} \to \infty}\frac{\abs{f(t)}}{e^{\xi t^2}}=+\infty, \,\,\text{for all}\,\,  \xi <\alpha_0. $
\end{itemize}
In addition, we assume that $f$ satisfies the following conditions:
\begin{itemize}
	\item[$(f_2)$] There exists $\theta>2$ such that
	$$
0\leq \theta F(t) \leq f(t)t,
	$$
	where $F(t)=\int_{0}^{t}f(s)\,ds$.
    \item[$(f_3)$] There exist two constants $p>2$ and $c_p>0$ such that
    $$
    \abs{f(t)} \geq c_p\abs{t}^{p-1},\,\,\text{for all}\,\, t\in \mathbb{R}.
    $$
\end{itemize}
Then, the problem
\begin{equation} \label{exponetial}
\left\{
\begin{array}{ll}
	-\Delta u = \lambda u + f(u), \quad & \text{in } \Omega, \\
	u = 0, & \text{on } \partial \Omega, \\
	\int_{\Omega} u^2\,dx = \mu,
\end{array}
\right.
\end{equation}
where $\Omega \subset \mathbb{R}^2$ is a smooth bounded domain, admits a solution for $\mu$ small enough and $c_p>0$ large.  The largeness of $c_p$ ensures good control of the $H_{0}^{1}(\Omega)$-norm of $(PS)$  sequences,  which is essential for applying the  Trudinger-Moser inequality to overcome the lack of compactness due to the critical exponential growth.

Set $c_p^*>0$ large enough. Then, we can obtain the following existence and multiplicity of normalized solutions for $\mu>0$ is small enough.
\begin{theorem}
Suppose that ($f_1$)-($f_3$) hold and that $c_p>c_p^*$. Then, there exists a constant $\mu^{*}>0$ depending only on $\Omega$ and $f$ such that, for all $0<\mu<\mu^{*}$, problem \eqref{exponetial} admits a solution $(u,\lambda) \in H^1_0(\Omega) \times  [0, {\lambda}_{1,1,1}]$.
\end{theorem}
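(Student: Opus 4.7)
The plan is to apply the perturbation method of Section \ref{secp100}, with two critical modifications: the subcritical Sobolev embedding is replaced by the Trudinger--Moser inequality, and condition $(f_3)$ with $c_p$ chosen large is used to force the mountain-pass level $c_\infty$ below the Trudinger--Moser threshold, so that all compactness and non-existence arguments can be carried out in a controlled regime. First I would define the perturbed functional $E_{r,\mu}$ and its truncation $J_{r,\mu}$ exactly as in Section \ref{secp100}. The mountain-pass geometry of Lemma \ref{lemmpg} follows from the standard consequence of $(f_1)$, namely $|F(t)| \le \tfrac{\varepsilon}{2} t^2 + C_{\varepsilon,\xi}(e^{\xi t^2}-1) t^2$ for any $\varepsilon>0$ and $\xi>\alpha_0$, combined with the Trudinger--Moser inequality applied to small $H_0^1$-norms. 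This yields a Cerami sequence $\{u_{n,r}\}$ at the minimax level $c_r>0$ and the monotone limit $c_\infty := \lim_{r\to\infty}c_r$.

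The crucial observation is that $(f_2)$ together with $(f_3)$ forces $F(t)\ge \tfrac{c_p}{p}|t|^p$. Testing the mountain-pass path with $t\varphi_1$, where $\varphi_1$ is the $L^2$-normalized first eigenfunction of $(-\Delta,H_0^1(\Omega))$ with $\|\varphi_1\|_2^2=\mu$, I obtain
\[
c_\infty \;\le\; \sup_{0\le t<1}\Bigl[\tfrac{t^2\mu\lambda_{1,0,0}}{2} - \tfrac{c_p t^p}{p}\|\varphi_1\|_p^p\Bigr] \;=\; \frac{p-2}{2p}\,\frac{(\mu\lambda_{1,0,0})^{p/(p-2)}}{(c_p\|\varphi_1\|_p^p)^{2/(p-2)}},
\]
which tends to $0$ as $c_p\to\infty$. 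By the Ambrosetti--Rabinowitz bound $(f_2)$, every Cerami sequence satisfies $\|u_{n,r}\|^2\le 2\theta c_r/(\theta-2)+o_n(1)$; hence choosing $c_p^\ast$ large enough guarantees $\|u_{n,r}\|^2 \le R^2 < 2\pi/\alpha_0$, strictly below the Trudinger--Moser threshold. In this regime the arguments of Lemmas \ref{lembounded}--\ref{lemur} adapt directly---the only nontrivial point is the strong convergence of $\int_\Omega f(u_{n,r})(u_{n,r}-u_r)dx$, which follows from Hölder's inequality, the Gagliardo--Nirenberg estimate $\|v\|_4^2\le C\|v\|_2\|\nabla v\|_2$ valid in $N=2$, and the uniform Trudinger--Moser bound $\int_\Omega e^{2\xi u_{n,r}^2}dx \le C$. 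The outcome is a critical point $u$ of $E$ at level $c_\infty$, satisfying either (i) $u\in S_{1,0,0}(\mu)$ with Lagrange multiplier $\lambda\in[0,\lambda_{1,0,0}]$, or (ii) $u$ is a free critical point ($\lambda=0$) with $\|u\|_2^2 = \nu\in(0,\mu)$.

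The main obstacle is ruling out case (ii) via an analogue of Lemma \ref{lemnex}. Fixing $\varepsilon\in(0,\lambda_{1,0,0})$ and $\xi>\alpha_0$ with $2\xi R^2<4\pi$, the pointwise estimate $|f(t)t|\le \varepsilon t^2 + C_\varepsilon(e^{\xi t^2}-1)t^2$, Hölder's inequality with $(e^{\xi u^2}-1)^2\le e^{2\xi u^2}$, the Trudinger--Moser inequality, and the $N=2$ Gagliardo--Nirenberg bound yield
\[
\|u\|^2 = \int_\Omega f(u)u\,dx \le \varepsilon\,\nu + C_\varepsilon R\sqrt{\nu}.
\]
Combined with $(f_3)$ through $\|u\|^2 \ge \theta\int_\Omega F(u)dx \ge \tfrac{\theta c_p}{p}\|u\|_p^p \ge \tfrac{\theta c_p}{p}|\Omega|^{1-p/2}\nu^{p/2}$ (the last step by Hölder), and with Poincaré's inequality $\|u\|^2\ge \lambda_{1,0,0}\nu$, one reaches a chain of inequalities that is incompatible for $\mu$ small: smallness of $\nu$ and of $R$ (the latter already arranged by taking $c_p>c_p^\ast$) conflict with the $\nu^{p/2}$ lower bound unless $u\equiv0$, which contradicts $c_\infty>0$. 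This fixes $\mu^\ast=\mu^\ast(\Omega,f)>0$ and completes the argument. The genuine difficulty lies precisely in this last step, since unlike the polynomial setting every monomial $|u|^{2k+2}$ arises when $e^{\xi u^2}$ is expanded, so one must work strictly below the Trudinger--Moser threshold; this is why the largeness of $c_p$, which controls $c_\infty$ and hence $R$, must be imposed \emph{before} the smallness of $\mu$ is chosen.
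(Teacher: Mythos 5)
The paper never actually proves this theorem: it sits in Section \ref{secfinalremarks}, where the authors explicitly state that proofs are omitted, offering only the hint that largeness of $c_p$ controls the $H_0^1$-norm of the relevant sequences so that the Trudinger--Moser inequality applies. Your reconstruction captures exactly that mechanism, and the computation behind it is correct: $(f_2)$--$(f_3)$ give $F(t)\ge\frac{c_p}{p}|t|^p$, hence $c_\infty\le\frac{p-2}{2p}(\mu\lambda_{1,0,0})^{p/(p-2)}(c_p\|\varphi_1\|_p^p)^{-2/(p-2)}$ (a bound which is in fact independent of $\mu$, since $\|\varphi_1\|_p^p$ scales like $\mu^{p/2}$), and the Ambrosetti--Rabinowitz inequality then confines all Cerami sequences, and the limit critical point of Lemma \ref{lemur}, to a ball of radius $R$ with $2\xi R^2<4\pi$ once $c_p$ is large. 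The mountain-pass geometry and the compactness step are also handled correctly.

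The genuine gap is in your substitute for Lemma \ref{lemnex}, i.e.\ the step ruling out case (ii) of Lemma \ref{lemur}: your chain of inequalities is not contradictory. You obtain the upper bound $\|u\|^2\le\varepsilon\nu+C_\varepsilon R\sqrt{\nu}$ and the lower bounds $\|u\|^2\ge\lambda_{1,0,0}\nu$ and $\|u\|^2\ge\frac{\theta c_p}{p}|\Omega|^{1-p/2}\nu^{p/2}$. Since $p>2$, as $\nu\to0^+$ the quantity $\nu^{p/2}$ is \emph{smaller} than $\nu$, which is smaller than $\sqrt{\nu}$; so all three inequalities are simultaneously satisfiable for every small $\nu$ (take $\|u\|^2\approx\lambda_{1,0,0}\nu$), and nothing forces $u\equiv0$. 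The defect is traceable to your Hölder splitting $\int_\Omega(e^{\xi u^2}-1)u^2\,dx\le\|e^{\xi u^2}-1\|_2\|u\|_4^2\le C\sqrt{\nu}\,\|\nabla u\|_2$: this is only \emph{quadratic} in the norm of $u$, with a Trudinger--Moser constant that is not small, so dividing $\|u\|^2=\int_\Omega f(u)u\,dx$ by $\|u\|^2$ leaves $1-\varepsilon/\lambda_{1,0,0}\le C\sqrt{\nu}/\|u\|\le C/\sqrt{\lambda_{1,0,0}}$, which is no contradiction. The repair is to exploit that $e^{\xi t^2}-1$ vanishes quadratically at $t=0$: from $(e^{\xi t^2}-1)t^2\le\xi t^4e^{\xi t^2}$, Hölder, the Trudinger--Moser bound (legitimate because $2\xi\|\nabla u\|_2^2\le 2\xi R^2<4\pi$) and the two-dimensional Gagliardo--Nirenberg inequality $\|u\|_8^4\le C\|u\|_2\|\nabla u\|_2^3$, one gets
$$
\int_\Omega(e^{\xi u^2}-1)u^2\,dx\;\le\;C\,\|u\|_2\,\|\nabla u\|_2^3\;=\;C\sqrt{\nu}\,\|\nabla u\|_2^3,
$$
which is quartic in the norm. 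Dividing the identity $\|u\|^2=\int_\Omega f(u)u\,dx$ by $\|u\|^2$ now yields $1-\varepsilon/\lambda_{1,0,0}\le C\sqrt{\nu}\,\|\nabla u\|_2\le CR\sqrt{\nu}$, the exact analogue of the case $\beta_pp>2$ in Lemma \ref{lemnex}, and this is false for all $\nu<\mu^*$ with $\mu^*$ depending only on $\Omega$, $f$ and $R$. With this single replacement (and noting that the multiplier range $[0,\lambda_{1,0,0}]$ still follows from the crude bound $c_\infty\le\mu\lambda_{1,0,0}/2$, valid since $F\ge0$), your argument closes; note also that both your estimate and the paper's hypotheses implicitly assume an upper bound of the form $|f(t)|\le\varepsilon|t|+C_\varepsilon(e^{\xi t^2}-1)|t|$, i.e.\ control of $f$ near the origin, which $(f_1)$--$(f_3)$ of this section do not literally provide.
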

\begin{theorem}
Suppose that ($f_1$)-($f_3$) hold and that $c_p>c_p^*$ large, and suppose that $f(t)=-f(-t)$. For any $m \in \mathbb{N}^+$, there exists $\mu^{*}_{m} > 0$ depending on $\Omega$ and $f$ such that, for any $0 < \mu < \mu^{*}_{m}$, problem \eqref{exponetial}  admits at least $m$ nontrivial solutions $(u_1,\bar{\lambda}_1), (u_2,\bar{\lambda}_2),\cdots,(u_m,\bar{\lambda}_m) \in H_0^{1}(\Omega)\times[0,+\infty)$.
\end{theorem}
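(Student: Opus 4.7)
My plan is to mirror the architecture of the proof of Theorem \ref{th4} in Section \ref{secp4}: combine the perturbation scheme of Section \ref{subsecpf} with the symmetric Fountain theorem, the eigenvalue ladder of $(-\Delta,H^1_0(\Omega))$, and the non-existence lemma (analogue of Lemma \ref{lemnex}) to kill the ``mass loss'' alternative. The genuinely new feature here is the exponential critical growth: every step that requires compactness of integrals of the form $\int_\Omega f(u_n)v\,dx$ needs the $H^1_0$-norm of the approximating sequence to stay strictly below the Trudinger-Moser threshold $\sqrt{4\pi/\alpha_0}$, and it is precisely the assumption $c_p>c_p^*$ that will enforce this.

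First I would define the even perturbed functional
$$
E_{r,\mu}(u) := \frac{1}{2}\|u\|^2 - \int_\Omega F(u)\,dx - f_r\!\!\left(\frac{\|u\|_2^2}{\mu}\right), \qquad u\in U_\mu,
$$
together with its truncation $J_{r,\mu}$ as in \eqref{Jrmu}, and introduce $Y_j$, $Z_j$, $B_{r,j}$ and the Fountain levels
$c_{r,j}:=\inf_{\gamma\in\Gamma_{r,j}}\max_{u\in B_{r,j}} J_{r,\mu}(\gamma(u))$
with $\rho_{r,j}=\sqrt{\mu\lambda_j}$, exactly as in Section \ref{secp4}. The upper bound $c_{r,j}\le \mu\lambda_j/2$ is reproduced verbatim from Lemma \ref{lemfountain} (the test packet in $Y_j\cap U_\mu$ only uses $F\ge 0$, which is a consequence of $(f_2)$). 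The Fountain lower bound uses $(f_1)$ through the subcritical majorant $|F(t)|\le C_\eta t^2 e^{(\alpha_0+\eta)t^2}$ and Trudinger-Moser on a small ball, giving
$\widetilde{b_{r,j}}\ge \tfrac{1}{2}\mu\lambda_j-\varepsilon-o_r(1)$ for suitable $\xi_{r,j}$. Fixing $k_1=1<k_2<\cdots<k_m$ so that the intervals $[\mu\lambda_{k_i}/2,\mu\lambda_{k_{i+1}}/2]$ are pairwise disjoint produces $m$ asymptotic levels $c_{\infty,k_i}:=\lim_{r\to\infty}c_{r,k_i}$, one in each window.

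The compactness step is where $(f_3)$ and the largeness of $c_p$ are indispensable. The Ambrosetti-Rabinowitz condition $(f_2)$ yields the rough bound $\|u_{n,r,k_i}\|^2\le \tfrac{2\theta}{\theta-2}c_{r,k_i}+o_n(1)\le \tfrac{\theta\mu\lambda_{k_m}}{\theta-2}$, which is in general \emph{not} below $4\pi/\alpha_0$. However, $(f_3)$ combined with the energy bound gives
$$
\tfrac{c_p}{p}\,\|u_{n,r,k_i}\|_p^p\le \int_\Omega F(u_{n,r,k_i})\,dx\le \tfrac{1}{2}\|u_{n,r,k_i}\|^2+o_n(1),
$$
which, interpolated with Gagliardo-Nirenberg and the mass constraint $\|u_{n,r,k_i}\|_2^2\le \mu$, improves the bound to $\|u_{n,r,k_i}\|^2\le C(p,\theta,\mu,\lambda_{k_m})\,c_p^{-\delta}$ for some $\delta>0$. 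Choosing $c_p^*=c_p^*(m,\mu_m^*,\Omega)$ large enough, this stays strictly below $4\pi/\alpha_0$ uniformly in $n,r$ and for every $i\le m$. Trudinger-Moser plus a Strauss-type lemma then gives $\int_\Omega f(u_{n,r,k_i})(u_{n,r,k_i}-u_{r,k_i})\,dx\to 0$, and the chain of arguments in Lemmas \ref{lembounded}-\ref{lemur} transfers without change to yield $u_{k_i}\in H^1_0(\Omega)$ with $E(u_{k_i})=c_{\infty,k_i}$, pairwise distinct, satisfying either the original constrained problem on $S_{1,0,0}(\mu)$ with $\bar\lambda_{k_i}\in[0,\lambda_{k_m}]$, or else living on $S_{1,0,0}(\nu)$ for some $\nu<\mu$ with $\bar\lambda_{k_i}=0$.

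The mass-loss alternative is excluded by an exponential analogue of Lemma \ref{lemnex}(i): if $E'(u)=0$ with $E(u)\le M$, then $(f_2)$ forces $\|u\|^2\le \tfrac{2\theta M}{\theta-2}$ and, enlarging $c_p^*$ if necessary, the $(f_3)$-interpolation keeps $\|u\|^2<4\pi/\alpha_0$; Trudinger-Moser then yields $\int_\Omega |f(u)|^s\,dx\le C$ for some $s>1$, so the identity $\|u\|^2=\int_\Omega f(u)u\,dx$ combined with Gagliardo-Nirenberg supplies a constant $\mu_0^\sharp(\Omega,f,M)>0$ below which no such $u$ exists. Taking $M=\mu\lambda_{k_m}/2$ and setting $\mu_m^*:=\min\{\mu_0^\sharp,\widetilde{\mu_{k_m}}\}$ rules out $\nu<\mu$ for every $i\le m$, yielding $m$ nontrivial solutions of \eqref{exponetial} with distinct energies. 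The main obstacle is the coupled dependence of $c_p^*$ on $m$: separating $m$ Fountain windows requires $\lambda_{k_m}$ large with $m$, so the $c_p$-improvement of the preceding paragraph must \emph{beat} a correspondingly larger a priori bound in order to fit inside the Trudinger-Moser window; calibrating these two competing bounds is the delicate quantitative core of the argument.
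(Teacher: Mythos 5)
Your overall architecture (perturbation functional $E_{r,\mu}$, truncation \eqref{Jrmu}, Fountain levels trapped in windows $(\mu\lambda_{k_{i-1}}/2,\mu\lambda_{k_i}/2]$, a non-existence lemma to kill the mass-loss alternative) is exactly the adaptation of Section \ref{secp4} that the paper intends; the paper omits this proof entirely, saying only that it mirrors the earlier sections and that largeness of $c_p$ controls the $H^1_0$-norms of the Palais--Smale sequences. The genuine gap is in your central compactness step: the claimed improvement $\|u_{n,r,k_i}\|^2\le C(p,\theta,\mu,\lambda_{k_m})\,c_p^{-\delta}$ is \emph{false} at the higher Fountain levels, and it is inconsistent with the very window structure you rely on. Indeed, by the Ambrosetti--Rabinowitz condition $F\ge 0$, and $f_r\ge 0$, so $E_{r,\mu}(u)\le \frac12\|u\|^2$; hence along the Cerami sequence $\|u_{n,r,k_i}\|^2\ge 2c_{r,k_i}+o_n(1)\ge 2b_{r,k_i}+o_n(1)>\mu\lambda_{k_{i-1}}+o_n(1)$ whenever the windows are separated. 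This lower bound does not depend on $c_p$, so no choice of $c_p$ can push these norms to zero: what $(f_3)$ really gives is only $\|u_{n,r,k_i}\|_p^p=O(c_p^{-1})$, and this does not transfer to the gradient norm because the penalization/multiplier contribution $\lambda_{n,r}\|u_{n,r,k_i}\|_2^2\approx 2c_{r,k_i}$ survives. Your quantifier order compounds the problem: choosing $c_p^*=c_p^*(m,\mu_m^*,\Omega)$ \emph{after} $\mu$ contradicts the statement (there $c_p>c_p^*$ is a hypothesis on $f$, and $\mu_m^*$ is chosen afterwards, depending on $f$), and for fixed $\mu$ a huge $c_p$ destroys the Fountain geometry, since the lower-bound estimate for $\widetilde{b_{r,k_i}}$ must absorb $-\int_\Omega F(u)\,dx\le -\frac{c_p}{p}\|u\|_p^p$, which on the multiples of $\varphi_{k_i}$ lying on the sphere grows linearly in $c_p$; then $b_{r,k_i}>\mu\lambda_{k_{i-1}}/2$ fails.

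The repair is simpler than the mechanism you invented, and it dissolves the ``delicate calibration'' you flag at the end: the rough AR bound you dismiss is the correct one. Fix the ladder $\lambda_{k_1}<\cdots<\lambda_{k_m}$ (this depends only on $m$ and $\Omega$); then $\|u_{n,r,k_i}\|^2\le \frac{2\theta}{\theta-2}c_{r,k_i}+o_n(1)\le \frac{\theta}{\theta-2}\mu\lambda_{k_m}+o_n(1)$, which is strictly below the Trudinger--Moser threshold $4\pi/\alpha_0$ as soon as $\mu_m^*\le \frac{4\pi(\theta-2)}{\theta\alpha_0\lambda_{k_m}}$ --- a legitimate choice because $\mu_m^*$ is allowed to depend on $m$, $\Omega$ and $f$. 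In other words, in the normalized/Fountain setting the Trudinger--Moser window is entered by shrinking $\mu$ \emph{after} $f$ (hence $c_p$) is fixed, not by enlarging $c_p$; all the $\mu$-smallness requirements (threshold, window separation --- where $c_p$ enters only through the constant in the upper estimate of $\int_\Omega F$ on the $Z_{k_i}$-spheres --- and the non-existence lemma) are then simultaneously satisfiable, exactly as in the proof of Theorem \ref{th4}. A secondary weak point of the same origin: in your exponential analogue of the non-existence lemma, smallness of the mass $\nu$ alone does not yield a contradiction (Hölder plus Trudinger--Moser produces a term $\nu^{1/s'}$ with $s'>1$, unlike the exact mass power in Lemma \ref{lemnex}); the contradiction must instead come from smallness of the a priori bound $R^2\le\frac{2\theta M}{\theta-2}$, i.e.\ again from $\mu$ small, together with an upper bound on $f$ near $t=0$ of the type $|f(t)|\le \varepsilon|t|+C_\varepsilon|t|^{q-1}e^{(\alpha_0+\eta)t^2}$, which should be made explicit since $(f_1)$--$(f_3)$ of this section do not formally provide it.
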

\subsection{The nonlinear Schr\"{o}dinger equations with magnetic fields} \mbox{}\\
We also investigate the existence and multiplicity of normalized solutions to the following nonlinear Schr\"{o}dinger equations with magnetic fields
\begin{equation} \label{P5}
	\left\{
	\begin{aligned}
		&\left( -i \nabla -A(x)\right)^2u =\lambda u+f(|u|^2)u \quad  \hbox{in} \quad \Omega,\\
		& u=0 \quad  \hbox{on} \quad \partial \Omega,\\
		&\int_{\Omega}|u|^{2}dx=\mu,
	\end{aligned}
	\right.
\end{equation}
where $\Omega \subset \mathbb{R}^N$ ($N \geq 3$)  is a smooth bounded domain,   and $A\in C(\overline{\Omega}, \mathbb{R}^N)$ is a continuous magnetic potential.

In the  $3$-dimensional case, the magnetic field $B$  is given by the curl of $A$, i.e. $B=\nabla \times A$. For higher dimensions $N \geq 4$, the magnetic field is represented by the 2-form $B_{i,j} = \partial_j A_k - \partial_k A_j$.

We suppose that the nonlinearity $f\in C(\R^+,\R)$  satisfies  the following assumptions:
\begin{itemize}
	\item[$(f_4)$] $f(0)=0$,   and there exist constants $c_1> 0$ and $p \in (2,2^*)$ such that
	$$
	\limsup_{s \to +\infty}	\frac{|f(s)|}{|s|^{\frac{p-2}{2}}} <+\infty ,
	$$
	where $2^*=\frac{2N}{N-2}$ if $N \geq 3$.
\item[$(f_5)$] There exists $q>2$ such that
$$
0 \leq \frac{q}{2}F(t) \leq f(t)t,\,\, \,\text{for all}\,\,\,t > 0,
$$
where $F(t)=\int_{0}^{t}f(s)\,ds$.
\end{itemize}
Under these assumptions, we establish the existence and multiplicity of normalized solutions for $\mu>0$ small enough.
\begin{theorem}
Suppose that ($f_4$)-($f_5$) hold. Then, there exists a constant $\mu^{*}>0$  depending only on $\Omega$, $f$ such that, for all $0<\mu<\mu^{*}$, problem \eqref{exponetial} has a solution $(u,\lambda) \in H^1_0(\Omega,\C) \times  [0,+\infty)$.
\end{theorem}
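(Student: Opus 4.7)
My plan is to follow the perturbation scheme developed in Section \ref{secp100} verbatim, replacing the Laplacian by the magnetic operator and the real Sobolev space by its complex counterpart. Throughout I shall work on the Hilbert space $H^1_0(\Omega,\C)$ equipped with the inner product $(u,v)_A := \mathrm{Re}\int_\Omega (-i\nabla-A)u \cdot \overline{(-i\nabla-A)v}\,dx$ and associated norm $\|u\|_A$. Let $\lambda_{1,A}$ denote the first eigenvalue of $(-i\nabla-A)^2$ on $H^1_0(\Omega,\C)$, which is positive. The energy functional associated to problem \eqref{P5} is
\[
E(u) = \frac{1}{2}\|u\|_A^2 - \frac{1}{2}\int_\Omega F(|u|^2)\,dx,
\]
and the perturbation functional is defined on $U_\mu := \{u\in H^1_0(\Omega,\C):\|u\|_2^2 < \mu\}$ by
\[
E_{r,\mu}(u) := \frac{1}{2}\|u\|_A^2 - \frac{1}{2}\int_\Omega F(|u|^2)\,dx - f_r\!\left(\frac{\|u\|_2^2}{\mu}\right),
\]
with $f_r$ exactly as in Section \ref{subsecpf}.

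The central analytic input is the diamagnetic inequality $|\nabla |u|\,| \leq |(-i\nabla-A)u|$ a.e.\ in $\Omega$, which together with $|u|\in H^1_0(\Omega,\R)$ yields both the compact embedding $H^1_0(\Omega,\C)\hookrightarrow L^p(\Omega,\C)$ for $2\leq p<2^*$ and the Gagliardo-Nirenberg inequality
\[
\||u|\|_p^p \leq C_{p,N}\|u\|_2^{(1-\beta_p)p}\|\nabla |u|\,\|_2^{\beta_p p} \leq C_{p,N}\|u\|_2^{(1-\beta_p)p}\|u\|_A^{\beta_p p}.
\]
With these two ingredients in hand, I first verify the mountain pass geometry for $E_{r,\mu}$ in the spirit of Lemma \ref{lemmpg}: for $\rho>0$ small, any $u$ with $\|u\|_A=\rho$ satisfies $\|u\|_2^2\leq \rho^2/\lambda_{1,A}<\mu$, and $(f_4)$ combined with the Gagliardo-Nirenberg estimate above produces $E_{r,\mu}(u)\geq \alpha>0$, while taking $u_0$ with $\|u_0\|_2^2=\mu$ and letting $t\to 1^-$ drives $E_{r,\mu}(tu_0)\to -\infty$. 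Consequently Theorem \ref{thmpopen} supplies a Cerami sequence $\{u_{n,r}\}$ at level $c_r>0$, and picking $\varphi_{1,A}$ as a first magnetic eigenfunction with $\|\varphi_{1,A}\|_2^2=\mu$ gives the uniform bound $c_\infty=\sup_{r>1}c_r \leq \frac{\mu\lambda_{1,A}}{2}$ as in Lemma \ref{lemcin}.

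The next steps mirror Lemmas \ref{lembounded}-\ref{lemur}: the Ambrosetti-Rabinowitz condition $(f_5)$ gives
\[
\|u_{n,r}\|_A^2 \leq \frac{2qE_{r,\mu}(u_{n,r}) - 2\langle E_{r,\mu}'(u_{n,r}),u_{n,r}\rangle}{q-2} + o_n(1) \leq \frac{2qc_r}{q-2}+o_n(1)
\]
for $r>q/2$, yielding boundedness of $\{u_{n,r}\}$; the analysis of Lagrange multipliers in Lemma \ref{lemlambdanr} carries over unchanged since it depends only on $f_r$; and the strong convergence argument of Lemma \ref{lemunr} goes through because the compact embedding plus the identity $\langle E'_{r,\mu}(u_{n,r}),u_{n,r}-u_r\rangle=o_n(1)$ reduce everything to convergence of $\int_\Omega f(|u_{n,r}|^2)u_{n,r}\overline{(u_{n,r}-u_r)}\,dx\to 0$, which follows from $(f_4)$ and compactness. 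Sending $r=r_n\to\infty$ yields $u\in H^1_0(\Omega,\C)$ with $E(u)\leq \frac{\mu\lambda_{1,A}}{2}$ satisfying either (i) $\|u\|_2^2=\mu$ and $u$ is a critical point of $E$ on the sphere with Lagrange multiplier $\lambda\in[0,\lambda_{1,A}]$, or (ii) $\|u\|_2^2=\nu<\mu$ and $u$ is a free critical point of $E$.

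The hard part is ruling out alternative (ii), and this is where the smallness of $\mu$ enters. I adapt part (ii) of Lemma \ref{lemnonex}: if $E'(u)=0$ and $\|u\|_2^2=\nu$, then by $(f_5)$ and $E(u)\leq \frac{\mu\lambda_{1,A}}{2}$,
\[
\|u\|_A^2 \leq \frac{2q E(u)}{q-2} \leq \frac{q\mu\lambda_{1,A}}{q-2},
\]
and from $E'(u)=0$ combined with $(f_4)$ and the diamagnetic Gagliardo-Nirenberg inequality,
\[
\|u\|_A^2 = \int_\Omega f(|u|^2)|u|^2\,dx \leq C\, \||u|\|_p^p \leq C'\,\|u\|_A^{\beta_p p}\nu^{(1-\beta_p)p/2}.
\]
Choosing $\mu^*>0$ sufficiently small (depending only on $\Omega$, $A$, $f$, $p$, $q$, $N$) makes both the $p\leq 2+4/N$ and the $p>2+4/N$ sub-cases incompatible, exactly as in Lemma \ref{lemnonex}(ii). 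This forces alternative (i), producing the desired normalized solution $(u,\lambda)\in H^1_0(\Omega,\C)\times[0,\lambda_{1,A}]\subset H^1_0(\Omega,\C)\times[0,+\infty)$. The main obstacle is purely bookkeeping: ensuring that every scalar inequality used in Section \ref{secp100} admits a magnetic counterpart via the diamagnetic inequality, and that the quantity $\lambda_{1,A}$ (which replaces $\lambda_{1,0,0}$) enters all thresholds in a way that makes $\mu^*$ depend only on $\Omega$, $A$, and $f$.
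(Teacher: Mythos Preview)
Your proposal is correct and is exactly the approach the paper intends: the paper explicitly omits the proof of this theorem, stating in Section~\ref{secfinalremarks} that ``the proofs of these results are similar to those presented earlier,'' so the adaptation you carry out---replacing $\lambda_{1,0,0}$ by the first magnetic eigenvalue $\lambda_{1,A}$, invoking the diamagnetic inequality to transfer the Gagliardo--Nirenberg estimate and compact embedding to $H^1_0(\Omega,\C)$, and rerunning Lemmas~\ref{lembounded}--\ref{lemur}, \ref{lemnex} and \ref{lemmpg}--\ref{lemcin}---is precisely what is meant. One small remark: the constant $\mu^*$ you produce depends on $A$ through $\lambda_{1,A}$ and the diamagnetic embedding constants, which is unavoidable; the theorem's phrasing ``depending only on $\Omega$, $f$'' should be read as including the fixed data $A$.
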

\begin{theorem}
Suppose that ($f_4$)-($f_5$) hold, and suppose that $f(t)=-f(-t)$. For any $m \in \mathbb{N}^+$, there exists $\mu^{*}_{m} > 0$ depending on $\Omega$, $f$ such that, for any $0 < \mu < \mu^{*}_{m}$, problem \eqref{exponetial}  has at least $m$ nontrivial solutions $(u_1,\bar{\lambda}_1), (u_2,\bar{\lambda}_2),\cdots,(u_m,\bar{\lambda}_m) \in H_0^{1}(\Omega,\C)\times[0,+\infty)$.
\end{theorem}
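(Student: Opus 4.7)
The plan is to parallel the proof of Theorem~\ref{th4} in the two-dimensional exponential-critical setting of problem \eqref{exponetial}. I would work in $H_0^1(\Omega)$ with $\|u\|=\|\nabla u\|_2$, energy $E(u)=\tfrac12\|u\|^2-\int_\Omega F(u)\,dx$, perturbation $E_{r,\mu}(u)=E(u)-f_r(\|u\|_2^2/\mu)$ on $U_\mu=\{\|u\|_2^2<\mu\}$, and the extended functional $J_{r,\mu}$ on $H_0^1(\Omega)$ defined by the cut-off $\beta$ as in \eqref{Jrmu}. The oddness $f(-t)=-f(t)$ makes both $E_{r,\mu}$ and $J_{r,\mu}$ even, which is the symmetry hypothesis needed by the Fountain theorem.

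Let $0<\lambda_1<\lambda_2<\cdots$ be the distinct eigenvalues of $(-\Delta,H_0^1(\Omega))$ and form the decomposition $H_0^1(\Omega)=Y_j\oplus Y_j^\perp$, $Z_j=Y_j^\perp\cup\operatorname{span}\{\varphi_j\}$ as in Section~\ref{secp4}. Choose $\rho_{r,j}=\sqrt{\mu\lambda_j}$; then $\|u\|_2^2\geq\mu$ on $\partial B_{r,j}\cap Y_j$, hence $J_{r,\mu}\equiv -1$ there and $a_{r,j}=-1$. For the intersection lower bound I would invoke the elementary inequality $|F(t)|\leq\varepsilon t^2+C_{\varepsilon,q}(e^{(\alpha_0+\varepsilon)t^2}-1)|t|^q$ supplied by exponential critical growth, combined with the Trudinger-Moser inequality on $\Omega\subset\mathbb{R}^2$: the term $\int_\Omega(e^{(\alpha_0+\varepsilon)u^2}-1)\,dx$ remains bounded whenever $\|u\|^2\leq (4\pi-\delta)/(\alpha_0+\varepsilon)$. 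Taking $\xi_{\varepsilon,j}$ close to $\rho_{r,j}$ then yields $\widetilde{b_{r,j}}\geq\tfrac12\mu(\lambda_j-K_2)-o(1)$ as long as $\mu\lambda_j$ sits strictly below this threshold; exactly as in Theorem~\ref{th4}, for each $m$ this produces a ladder $\lambda_1<\lambda_{k_2}<\cdots<\lambda_{k_m}$ and a mass threshold $\widetilde{\mu_{k_m}}$ below which the levels $\widetilde{b_{r,k_i}}$ and $c_{r,k_i}$ are cleanly separated by the midpoints $\tfrac12\mu\lambda_{k_i}$.

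Applying the Fountain theorem at each $j=k_i$ produces a Cerami sequence at level $c_{r,k_i}$ with $0<c_{\infty,k_1}<\cdots<c_{\infty,k_m}\leq\tfrac12\mu\lambda_{k_m}$. The Ambrosetti-Rabinowitz type hypothesis controls $\|u_{n,r,k_i}\|$ exactly as in Lemma~\ref{lembounded}, yielding $\|u_{n,r,k_i}\|^2\leq\frac{2q\mu\lambda_{k_m}}{q-2}+o_n(1)$. The essential new issue, absent in Section~\ref{secp100}, is that the compact Sobolev embedding is no longer available in the exponential-critical regime. This is where the size condition $c_p>c_p^*$ borrowed from the companion existence theorem enters decisively: by inspection of the energy identity it shrinks the admissible level interval so that the above bound remains strictly below the Trudinger-Moser constant $4\pi/\alpha_0$. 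A Lions-type concentration-compactness argument then upgrades weak to strong convergence $u_{n,r,k_i}\to u_{r,k_i}$ in $H_0^1(\Omega)$, and the limiting procedure of Lemmas~\ref{lemlambdanr}--\ref{lemur} produces $u_{k_i}\in H_0^1(\Omega)$ with $E(u_{k_i})=c_{\infty,k_i}$ and nonnegative Lagrange multiplier.

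The residual alternative of Lemma~\ref{lemur}, that $\|u_{k_i}\|_2^2<\mu$ with vanishing multiplier, is excluded by an exponential analogue of Lemma~\ref{lemnex}: any critical point with $E(u)\leq M$ is $H_0^1$-bounded by the AR argument, and substituting the coercive lower bound $|f(t)|\geq c_p|t|^{p-1}$ into $\|u\|^2=\int_\Omega f(u)u\,dx$ together with the Gagliardo-Nirenberg inequality \eqref{eqgn} forces a universal lower bound $\|u\|_2^2\geq\nu_*(\Omega,f,M,c_p)>0$. Setting $\mu_m^*:=\min\{\nu_*,\widetilde{\mu_{k_m}}\}$ then delivers, for every $0<\mu<\mu_m^*$, at least $m$ distinct nontrivial normalized solutions at strictly separated energies. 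The main obstacle throughout is the \emph{simultaneous} Trudinger-Moser control across the whole ladder: since $c_{\infty,k_m}$ grows with $\lambda_{k_m}$, the size $c_p^*$ (equivalently $\mu_m^*$) must be calibrated as an $m$-dependent threshold to keep every level in the ladder strictly below $4\pi/\alpha_0$.
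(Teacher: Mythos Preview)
You have misidentified the problem. Although the statement carries the label \eqref{exponetial}, that cross-reference is a typo in the paper: the hypotheses invoked are $(f_4)$--$(f_5)$, which are introduced only in Section~6.2 for the \emph{magnetic Schr\"odinger equation}~\eqref{P5}, and the solution space is the complex Sobolev space $H_0^1(\Omega,\mathbb{C})$. The result you are sketching---the two-dimensional exponential-critical problem under the Section~6.1 hypotheses $(f_1)$--$(f_3)$ with $c_p>c_p^*$---is the \emph{previous} theorem in the paper, not this one.

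Consequently the core of your proposal is off target. In~\eqref{P5} the nonlinearity is $f(|u|^2)u$ with $f$ of Sobolev-subcritical polynomial growth ($(f_4)$, $p\in(2,2^*)$, $N\geq3$), so the compact embedding $H_0^1(\Omega,\mathbb{C})\hookrightarrow L^p(\Omega,\mathbb{C})$ \emph{is} available and the Trudinger--Moser and Lions concentration-compactness machinery you deploy is unnecessary; likewise the threshold $c_p>c_p^*$ plays no role here. The paper omits the proof and simply says it follows the template of Theorem~\ref{th4}; the only modifications are to work with the magnetic norm $\|(-i\nabla-A)u\|_2$, to base the Fountain decomposition on the spectrum of the magnetic Laplacian, and to observe that the structure $f(|u|^2)u$ already makes the energy invariant under $u\mapsto -u$ (indeed under $u\mapsto e^{i\theta}u$), which supplies the evenness for the Fountain theorem. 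Condition $(f_5)$ plays the role of the Ambrosetti--Rabinowitz condition $(f_4)$ of Section~\ref{secp100}, giving boundedness of Cerami sequences directly as in Lemma~\ref{lembounded}, and the exclusion of the degenerate alternative follows Lemma~\ref{lemnex}(ii) verbatim from $(f_4)$--$(f_5)$; your final step instead invokes the \emph{lower} bound $|f(t)|\geq c_p|t|^{p-1}$, which is Section~6.1's $(f_3)$ and is not assumed here.
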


\subsection{Bi-harmonic equations} \mbox{} \\
Another interesting class of problem that our approach can be applied is the bi-harmonic equation of the form
\begin{equation} \label{eqbi1}
	\left\{
	\begin{array}{ll}
		\Delta^2 u-\beta \Delta u = \lambda u + f(u),\,\,& \text{in}\,\, \Omega, \\
		u =\Delta u= 0 \,\, (\mbox{or}\,\, u=\frac{\partial u}{\partial \eta}=0 ), & \text{on } \partial \Omega, \\
		\int_{\Omega} u^2\,dx = \mu,
	\end{array}
	\right.
\end{equation}
where $\Omega \subset \mathbb{R}^N$ ($N\geq 1$) is a smooth bounded domain and $\beta \in \mathbb{R}$. Under suitable assumptions, this problem admits a solution for $\mu>0$ small enough. For example, we may assume $f \in C(\R,\R)$ and satisfies:
\begin{itemize}
	\item[$(f_6)$] There exists $p \in (2,2_*)$ such that
	$$
	\limsup_{|s| \to +\infty}\frac{|f(s)|}{|s|^{p-1}}<+\infty,
	$$
where $2_* = \frac{2N}{N-4}$ if $N \geq 5$ and $2_* = +\infty$ if $N = 1, 2, 3, 4$.
	\item[$(f_7)$] There exists $\theta>2$ such that
	$$
	0 \leq \theta F(t) \leq f(t)t,
	$$
	where $F(t)=\int_{0}^{t}f(s)\,ds$.
\end{itemize}

It is well known that the space $H^2_0(\Omega)$ (for the Dirichlet boundary condition $u =\Delta u= 0$) or $H^2(\Omega) \cap H^1_0(\Omega)$ (for the Navier boundary condition $u=\frac{\partial u}{\partial \eta}=0$) is a Hilbert space equipped with the norm
$$
\norm{\Delta u}^2_2=\int_\Omega \abs{\Delta u}^2\,dx.
$$
By \cite{Ha2}, we have
\begin{equation}\label{eqbiharmonic}
    \norm{\nabla u}_2^2 \leq C_B\norm{\Delta u}_2\norm{u}_2.
\end{equation}
If $\beta \geq 0$, then all computations in the previous sections apply here with minor modifications.
Now, suppose that $\beta < 0$. Applying  Young's inequality together with \eqref{eqbiharmonic}, we have
$$
\norm{\Delta u}_2^2-\beta \norm{\nabla u}_2^2\geq  \norm{\Delta u}_2^2-\beta C_B\norm{\Delta u}_2\norm{u}_2\geq \norm{\Delta u}_2^2 - \frac{1}{2} \norm{\Delta u}_2^2 -2\beta^2 C_B^2\norm{u}^2_2.
$$
Let $s:= 1+ 2\beta^2 C_B^2$, it follows that
$$
\norm{\Delta u}_2^2-\beta \norm{\nabla u}_2^2 + s\norm{u}^2_2 \geq \frac{1}{2}\norm{\Delta u}_2^2+ \norm{u}^2_2.
$$
Then, with slight modifications, all the computation  in the previous section can be adapted to the following equivalent formulation of problem \eqref{eqbi1}.
\begin{equation} \label{eqbi2}
	\left\{
	\begin{array}{ll}
		\Delta^2 u-\beta \Delta u +su = \lambda u + f(u), \quad & \text{in } \Omega, \\
		u =\Delta u= 0 \,\, ( \mbox{or} \,\, u=\frac{\partial u}{\partial \eta}=0 ), & \text{on } \partial \Omega, \\
		\int_{\Omega} u^2\,dx = \mu.
	\end{array}
	\right.
\end{equation}

More precisely, we can obtain the following existence and multiplicity of normalized solutions for $\mu>0$ is small enough.
\begin{theorem}
Suppose that ($f_6$)-($f_7$) hold. Then, there exists $\mu^{*}>0$ (or $\mu^{**} > 0$)  depending on $\Omega$, $f$ such that, for all $0<\mu<\mu^{*}$ (or $0<\mu<\mu^{**}$), problem \eqref{eqbi1} has a solution $(u,\lambda) \in H^2_0(\Omega) \times  \R$ for the Dirichlet boundary condition $u =\Delta u= 0$ (or $(u,\lambda) \in (H^2(\Omega) \cap H^1_0(\Omega)) \times \R$ for the Navier boundary condition $u=\frac{\partial u}{\partial \eta}=0$).
\end{theorem}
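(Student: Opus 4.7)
The plan is to adapt the perturbation scheme used for $(P)^\mu_{1,0,0}$ to the biharmonic setting, working throughout with the equivalent formulation \eqref{eqbi2} so that the underlying quadratic form is coercive. Let $X$ denote either $H^2_0(\Omega)$ (Dirichlet case) or $H^2(\Omega)\cap H^1_0(\Omega)$ (Navier case), equipped with the norm
\[
\|u\|^2 := \|\Delta u\|_2^2 - \beta\|\nabla u\|_2^2 + s\|u\|_2^2,
\]
where $s$ is chosen as in the paper ($s=0$ when $\beta\geq 0$ and $s=1+2\beta^2 C_B^2$ otherwise) so that $\|u\|^2\geq \frac{1}{2}\|\Delta u\|_2^2+\|u\|_2^2$. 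Let $\Lambda_1>0$ denote the first eigenvalue of the operator $\Delta^2-\beta\Delta+s$ on $X$, and fix a Gagliardo--Nirenberg inequality $\|u\|_p^p\leq \tilde C_{p,N}\|u\|_2^{(1-\tilde\beta_p)p}\|\Delta u\|_2^{\tilde\beta_p p}$ valid for $p\in[2,2_*)$. I would then define, for $r>1$ and $u\in U_\mu:=\{u\in X:\|u\|_2^2<\mu\}$,
\[
E_{r,\mu}(u):=\tfrac12\|u\|^2-\int_\Omega F(u)\,dx-f_r\!\left(\tfrac{\|u\|_2^2}{\mu}\right)-\tfrac{s}{2}\|u\|_2^2,
\]
the last term being present to ensure that critical points of $E_{r,\mu}$ correspond to solutions of \eqref{eqbi1} with Lagrange multiplier $\lambda$ (rather than $\lambda+s$).

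First I would verify the mountain pass geometry of $E_{r,\mu}$: assumption $(f_6)$ together with the Gagliardo--Nirenberg inequality yields $E_{r,\mu}(u)\geq \alpha>0$ on a small sphere $\|u\|=\rho$ of $U_\mu$ (using $\|u\|_2^2\leq \rho^2/\Lambda_1$), while choosing $u_0$ with $\|u_0\|_2^2=\mu$ gives $E_{r,\mu}(tu_0)\to-\infty$ as $t\to 1^-$. Applying Theorem \ref{thmpopen} on the open set $U_\mu$ (after composing with $\beta$ as in \eqref{Jrmu}) produces a Cerami sequence $\{u_{n,r}\}$ at level $c_r>0$, with $c_r$ monotone in $r$ and bounded above by $\sup_{0\leq t<1}E(tu_0)\leq \mu\Lambda_1/2$ (obtained by testing against the first eigenfunction of the biharmonic operator).

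Next I would establish the compactness machinery. Using $(f_7)$ and \eqref{eqfr's}, the identity $qE_{r,\mu}(u_{n,r})-\langle E_{r,\mu}'(u_{n,r}),u_{n,r}\rangle$ together with $qF\leq ft$ dominates $\|u_{n,r}\|^2$ from above by $\frac{2q c_r}{q-2}+o_n(1)$, exactly as in Lemma \ref{lembounded} (second case). Since $X\hookrightarrow L^p(\Omega)$ is compact for $p\in[2,2_*)$, the arguments of Lemmas \ref{lemlambdanr}--\ref{lemur} transfer verbatim, yielding (up to a diagonal extraction in $r_n\to\infty$) a limit $u\in X$ with either (i) $\|u\|_2^2=\mu$ and $u$ solves \eqref{eqbi1} with $\lambda\in[-s,\Lambda_1-s]$, or (ii) $\|u\|_2^2=\nu<\mu$ and $u$ solves the unconstrained equation with $\lambda=-s$.

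The main obstacle, and the only step that requires a genuinely new calculation, is ruling out alternative (ii) for small $\mu$. I would prove an analogue of Lemma \ref{lemnex}(ii): testing $E'(u)=0$ against $u$ and invoking $(f_6)$ with the biharmonic Gagliardo--Nirenberg inequality gives
\[
\Lambda_1\|u\|_2^2\leq \|u\|^2 = \int_\Omega f(u)u\,dx\leq K_p'\|u\|_2^{(1-\tilde\beta_p)p}\|u\|^{\tilde\beta_p p},
\]
while $(f_7)$ bounds $\|u\|^2\leq \tfrac{2q E(u)}{q-2}\leq \tfrac{q\mu\Lambda_1}{q-2}$; combining these two estimates yields an inequality of the form $C\leq \nu^{(p-2)/2}$ for some $C>0$ depending only on $\Omega, f$, which is violated once $\nu\leq\mu<\mu^*$ (respectively $\mu^{**}$) for an explicit threshold. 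Setting $\mu^*$ (or $\mu^{**}$) accordingly completes the proof. The delicate point is that one must close the estimate uniformly in $\beta$, $s$, and the boundary condition, which is where the careful choice $s=1+2\beta^2 C_B^2$ pays off, guaranteeing that $\Lambda_1$ is bounded below by a constant depending only on $\Omega$ and $\beta$.
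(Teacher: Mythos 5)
Your overall route is the paper's route: shift the operator to restore coercivity, run the Section~\ref{secp100} perturbation scheme (penalization $f_r$, truncation $J$, Theorem~\ref{thmpopen}, the compactness lemmas, and a non-existence lemma to exclude the mass-defect alternative), and translate back. However, your specific modification of the perturbed functional creates a genuine gap. You define
\[
E_{r,\mu}(u)=\tfrac12\|u\|^2-\int_\Omega F(u)\,dx-f_r\!\left(\tfrac{\|u\|_2^2}{\mu}\right)-\tfrac{s}{2}\|u\|_2^2 ,
\]
so the quadratic part of your functional is $\tfrac12\|u\|^2-\tfrac{s}{2}\|u\|_2^2=\tfrac12 Q(u)$, where $Q$ is the quadratic form of the \emph{unshifted} operator $\Delta^2-\beta\Delta$. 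In the only regime where the shift $s>0$ is actually needed, $Q$ is indefinite; for instance, in the Navier case the first Dirichlet eigenfunction $\varphi_1$ of $-\Delta$ lies in $H^2(\Omega)\cap H^1_0(\Omega)$ and satisfies $Q(\varphi_1)=\lambda_1(\lambda_1+\beta)\|\varphi_1\|_2^2<0$ once $\beta$ is large with the unfavourable sign. Since $(f_7)$ gives $F\geq 0$ and $f_r\geq 0$, one then has $E_{r,\mu}(t\varphi_1)\leq \tfrac{t^2}{2}Q(\varphi_1)<0$ for every small $t>0$, so there is \emph{no} sphere $\|u\|=\rho$ on which $E_{r,\mu}\geq\alpha>0$: the analogue of Lemma~\ref{lemmpg}(i) fails and Theorem~\ref{thmpopen} cannot be invoked (one cannot even certify $c_r>0$). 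Your own estimate exposes this: using $\|u\|_2^2\leq\|u\|^2/\Lambda_1$ only yields $\tfrac12\|u\|^2-\tfrac{s}{2}\|u\|_2^2\geq\tfrac{\rho^2}{2}\bigl(1-s/\Lambda_1\bigr)$, and $1-s/\Lambda_1=\lambda_1(\Delta^2-\beta\Delta)/\Lambda_1\leq 0$ precisely when $Q$ is indefinite.

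The repair is simply to delete the term $-\tfrac{s}{2}\|u\|_2^2$ and run the scheme on the shifted problem \eqref{eqbi2}, exactly as the paper does: the perturbation method then produces a critical point of the shifted energy with multiplier $\lambda_s\in[0,\Lambda_1]$, i.e. a solution of \eqref{eqbi1} with $\lambda=\lambda_s-s\in\R$, which is all the theorem asserts; your stated motivation for the subtraction (recovering the multiplier of \eqref{eqbi1} directly) is unnecessary. Tellingly, the dichotomy you announce afterwards ($\lambda\in[-s,\Lambda_1-s]$, or $\lambda=-s$ in the defect case) is the one produced by the \emph{unmodified} scheme, not by your functional, which would give $\lambda\geq 0$ for \eqref{eqbi1} --- an internal inconsistency that signals the same confusion. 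Two minor further points to tighten: (a) $(f_6)$ is only an asymptotic bound, so in the non-existence step the inequality $\int_\Omega f(u)u\,dx\leq K_p'\|u\|_2^{(1-\tilde\beta_p)p}\|u\|^{\tilde\beta_p p}$ requires handling the region where $|u|$ is bounded separately (the paper's sketch glosses this too, but in your write-up it is presented as a direct consequence of $(f_6)$); (b) the defect alternative must be formulated for the shifted energy (multiplier $\lambda_s=0$, not $\lambda=0$), since that is the functional whose critical points the scheme produces.
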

\begin{theorem}
Suppose that ($f_6$)-($f_7$) hold, and suppose that $f(t)=-f(-t)$. For any $m \in \mathbb{N}^+$, there exists $\mu^{*}_{m} > 0$ (or $\mu^{**}_{m} > 0$) depending on $\Omega$, $f$ such that, for any $0 < \mu < \mu^{*}_{m}$ (or $0<\mu<\mu^{**}_m$), problem \eqref{eqbi1}  has at least $m$ nontrivial solutions $(u_1,\bar{\lambda}_1), (u_2,\bar{\lambda}_2),\cdots,(u_m,\bar{\lambda}_m) \in H_0^{2}(\Omega)\times  \R$ for the Dirichlet boundary condition $u =\Delta u= 0$ (or $(u_1,\bar{\lambda}_1), (u_2,\bar{\lambda}_2),\cdots,(u_m,\bar{\lambda}_m)\in (H^2(\Omega) \cap H^1_0(\Omega)) \times \R$ for the Navier boundary condition $u=\frac{\partial u}{\partial \eta}=0$).
\end{theorem}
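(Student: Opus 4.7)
The plan is to transplant the perturbation/Fountain scheme used to prove Theorem \ref{th4} into the biharmonic setting. First, if $\beta\geq 0$, the form $\norm{\Delta u}_2^2-\beta\norm{\nabla u}_2^2$ is already coercive on $H^2_0(\Omega)$ (or on $H^2(\Omega)\cap H^1_0(\Omega)$); if $\beta<0$, I would use the interpolation \eqref{eqbiharmonic} and Young's inequality exactly as in the paragraph preceding \eqref{eqbi2} to fix $s:=1+2\beta^2 C_B^2$ and pass to the equivalent problem \eqref{eqbi2}, so that the bilinear form $a(u,v):=\int_\Omega \Delta u\,\Delta v-\beta\nabla u\cdot\nabla v+s\,uv\,dx$ defines an inner product whose induced norm is equivalent to the usual $H^2$-norm. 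Solutions of \eqref{eqbi2} correspond to solutions of \eqref{eqbi1} with Lagrange multiplier shifted by $s$, so existence/multiplicity for \eqref{eqbi1} follows from the corresponding statement for \eqref{eqbi2}.

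Next, I would define $E:H\to\R$ by $E(u)=\tfrac12 a(u,u)-\int_\Omega F(u)\,dx$ (with $H=H^2_0(\Omega)$ or $H^2(\Omega)\cap H^1_0(\Omega)$ depending on the boundary condition), and introduce the perturbed functional $E_{r,\mu}(u)=E(u)-f_r(\norm{u}_2^2/\mu)$ on $U_\mu=\{u\in H:\norm{u}_2^2<\mu\}$, together with its smooth cut-off extension $J_{r,\mu}$ as in \eqref{Jrmu}. The operator $\Delta^2-\beta\Delta+s$ with the chosen boundary conditions is selfadjoint with compact inverse on $L^2(\Omega)$, so it has an increasing unbounded sequence of eigenvalues $0<\lambda_1^{bi}<\lambda_2^{bi}<\cdots\to+\infty$, and I can set up the Fountain decomposition $Y_j,Z_j$, $B_{r,j}$, $N_{r,j}$, radii $\rho_{r,j}=\sqrt{\mu\lambda_j^{bi}}$ and $\xi_{\varepsilon,j}=\sqrt{\tfrac{k-1}{k}\mu\lambda_j^{bi}}$ exactly as in Subsection \ref{secp4}, using the oddness $f(-t)=-f(t)$ to ensure all functionals involved are even.

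The core analytic work is then a verbatim transposition of Lemmas \ref{lembounded}--\ref{lemur}, \ref{lemmpg}--\ref{lemcin}, \ref{lemfountain} and \ref{lemnonex}. The growth hypothesis ($f_6$) plays the role of ($f_1$), guaranteeing the Gagliardo--Nirenberg-type estimate $\norm{u}_p^p\le C\norm{u}_2^{(1-\beta_p)p}\norm{\Delta u}_2^{\beta_p p}$ (valid for $2<p<2_\ast$), while the Ambrosetti--Rabinowitz condition ($f_7$) takes the role of ($f_4$): for $r>\theta/2$, the identity
\[
\norm{u_{n,r}}_H^2\le \frac{2\theta E_{r,\mu}(u_{n,r})-2\langle E'_{r,\mu}(u_{n,r}),u_{n,r}\rangle}{\theta-2}+o_n(1)
\]
yields Cerami boundedness, and $u_{n,r}\to u_r$ strongly via the compact embedding $H\hookrightarrow L^p(\Omega)$ for $2\le p<2_\ast$. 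The non-existence lemma is proved in the same way: for any $M>0$, a critical point $u\in H$ of $E$ with $E(u)\le M$ and $\norm{u}_2^2=\nu$ must satisfy $\norm{u}_H^2\le \tfrac{2\theta M}{\theta-2}$ by ($f_7$), and then ($f_6$) combined with the Gagliardo--Nirenberg inequality forces $\nu\ge \mu_0(\Omega,f,M)$. Then with $k_1=1$ and $k_2<\cdots<k_m$ chosen so that $\lambda_{k_i}^{bi}$ separate the Fountain levels, I obtain, for $\mu$ smaller than an explicit $\mu_m^\ast$ (or $\mu_m^{\ast\ast}$), $m$ distinct positive min-max values $c_{\infty,k_1}<c_{\infty,k_2}<\cdots<c_{\infty,k_m}$, each realised by a genuine constrained critical point on $S(\mu)$ with Lagrange multiplier in $[0,+\infty)$ (after undoing the shift by $s$, on $\R$ for \eqref{eqbi1}).

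The main obstacle I anticipate is the same one emphasised in the $(P)^\mu_{1,1,1}$ discussion: one cannot appeal to a single unified Gagliardo--Nirenberg-type inequality linking all natural norms in the biharmonic setting, and one must be careful that the shift $s=1+2\beta^2C_B^2$ (needed when $\beta<0$) keeps the spectrum, the eigenfunctions and the constants in Lemma \ref{lemfountain} under control so that the Fountain separation $\tfrac{\mu\lambda_{k_{i-1}}^{bi}}{2}<b_{r,k_i}\le c_{\infty,k_i}\le \tfrac{\mu\lambda_{k_i}^{bi}}{2}$ persists. This is where the coupling between $\beta$, $s$ and the $\lambda_j^{bi}$ has to be unwound carefully, but once this is done the rest of the argument is a routine, line-by-line translation of the proof of Theorem \ref{th4}.
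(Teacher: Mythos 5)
Your plan is correct and coincides with the paper's own treatment: the paper proves this theorem only by the remark that, after the shift $s=1+2\beta^2C_B^2$ (needed when $\beta<0$) and passage to the equivalent problem \eqref{eqbi2}, all computations of Section \ref{secp100} (the perturbed functional, Lemmas \ref{lembounded}--\ref{lemur}, the non-existence lemma, and the Fountain scheme of Subsection \ref{secp4}) carry over with minor modifications, which is exactly the translation you describe, including the role swap ($f_6$)$\leftrightarrow$($f_1$), ($f_7$)$\leftrightarrow$($f_4$) and the final multiplier lying in $\R$ after undoing the shift. Your closing concern is also milder than you suggest, since in the biharmonic setting the interpolation $\|u\|_p^p\leq C\|u\|_2^{(1-\beta_p)p}\|\Delta u\|_2^{\beta_p p}$ is available and the spectrum of $\Delta^2-\beta\Delta+s$ is simply that of $\Delta^2-\beta\Delta$ shifted by $s$, so the Fountain level separation is unaffected.
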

\subsection{Choquard equation} \mbox{}\\
Our approach can also be applied to the study of normalized solutions for the following class of Choquard equations:
\begin{equation} \label{choquard}
	\left\{
	\begin{array}{ll}
		-\Delta u = \lambda u + \left(\int_{\Omega}\frac{|u(y)|^{p}}{|x-y|^{\alpha}} \, dx\right)|u|^{p-2}u, \quad & \text{in } \Omega, \\
		u = 0, & \text{on } \partial \Omega, \\
		\int_{\Omega} u^2\,dx = \mu,
	\end{array}
	\right.
\end{equation}
where $\Omega \subset \mathbb{R}^N$ ($N\geq 1$) is a smooth bounded domain, $\alpha \in (0,N)$ and $p$ satisfies
$$
2-\frac{\alpha}{N}<p<\frac{2^*}{2}\left(2-\frac{\alpha}{N} \right).
$$
where $2^* = \frac{2N}{N-2}$ if $N \geq 3$ and $2^* = +\infty$ if $N = 1, 2$.

Under these assumptions, the existence and multiplicity of normalized solutions can be established for $\mu>0$ small enough.
\begin{theorem}
Suppose that $\alpha \in (0,N)$ and $p$ satisfies
$
2-\frac{\alpha}{N}<p<\frac{2^*}{2}\left(2-\frac{\alpha}{N} \right)
$. Then, there exists $\mu^{*}>0$  depending on $\Omega$, $f$ such that, for all $0<\mu<\mu^{*}$, problem \eqref{choquard} has a solution $(u,\lambda) \in H^1_0(\Omega) \times  [0,\lambda_{1,1,1})$.
\end{theorem}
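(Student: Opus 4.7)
The strategy is to adapt the perturbation scheme of Section~\ref{secp100} to the Choquard setting. I would set
$$\Psi(u):=\frac{1}{2p}\int_\Omega\int_\Omega \frac{|u(x)|^p|u(y)|^p}{|x-y|^\alpha}\,dx\,dy, \qquad E(u):=\tfrac12\|u\|^2-\Psi(u),$$
and introduce the penalized functional $E_{r,\mu}(u):=E(u)-f_r(\|u\|_2^2/\mu)$ on $U_\mu\subset H_0^1(\Omega)$. The Hardy--Littlewood--Sobolev inequality together with the Sobolev embedding gives $\Psi(u)\le C\|u\|_q^{2p}$ with $q=2Np/(2N-\alpha)$, and the hypothesis $2-\alpha/N<p<\frac{2^*}{2}(2-\alpha/N)$ forces $q\in(2,2^*)$, so by the Gagliardo--Nirenberg inequality \eqref{eqgn},
$$\Psi(u)\le C'\|u\|_2^{2p(1-\beta_q)}\|u\|^{2p\beta_q}, \qquad \beta_q=N(\tfrac12-\tfrac1q).$$
A direct computation yields $2p\beta_q=N(p-2)+\alpha$, which distinguishes the $L^2$-sub/critical/supercritical regimes. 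This estimate plays the role of assumption ($f_1$) in the earlier proofs.

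\emph{Mountain-pass geometry, boundedness, and compactness.} Since $2p>2$, the argument of Lemma~\ref{lemmpg} applies without change: for small $\|u\|=\rho$ the positive quadratic term dominates the nonlocal one, while $E_{r,\mu}(tu_0)\to-\infty$ as $t\to1^-$ for any $u_0$ with $\|u_0\|_2^2=\mu$. Theorem~\ref{thmpopen} then produces a Cerami sequence $\{u_{n,r}\}$ at a positive mountain-pass level $c_r$. Boundedness is actually simpler than in Lemma~\ref{lembounded}: the identity $\langle\Psi'(u),u\rangle=2p\Psi(u)$ combined with \eqref{eqfr's} yields, for $r>p$,
$$\|u_{n,r}\|^2\le\frac{2p}{p-1}\Big(E_{r,\mu}(u_{n,r})-\tfrac{1}{2p}\langle E_{r,\mu}'(u_{n,r}),u_{n,r}\rangle\Big)+o_n(1)\le\frac{2pc_r}{p-1}+o_n(1).$$
The compactness step (analog of Lemma~\ref{lemunr}) goes through thanks to the compact embedding $H_0^1(\Omega)\hookrightarrow L^q(\Omega)$, which, via HLS applied to $|u_{n,r}|^p-|u_r|^p$, forces the nonlocal term to converge strongly. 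Letting $r\to\infty$ and invoking Lemma~\ref{lemur}, I would obtain $u\in H_0^1(\Omega)$ with $E(u)=c_\infty$ satisfying either (i) $u\in S_{1,0,0}(\mu)$ with $\lambda\in[0,2c_\infty/\mu]$, or (ii) $u\in S_{1,0,0}(\nu)$ for some $\nu<\mu$ with $\lambda=0$.

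\emph{Upper bound and non-existence.} Since $\Psi\ge 0$, testing with the first Dirichlet eigenfunction $\varphi_1$ normalized so that $\|\varphi_1\|_2^2=\mu$ gives $c_\infty\le\sup_{0\le t<1}E(t\varphi_1)\le\mu\lambda_{1,0,0}/2$, in parallel with Lemma~\ref{lemcin}. To exclude alternative (ii), I would prove the Choquard analog of Lemma~\ref{lemnex}: if $u\in H_0^1(\Omega)$ satisfies $E'(u)=0$, $\|u\|_2^2=\nu<\mu$ and $E(u)\le M$, then $\|u\|^2=2p\Psi(u)\le C\nu^{p(1-\beta_q)}\|u\|^{2p\beta_q}$; coupling this with either $\|u\|^2\ge\lambda_{1,0,0}\nu$ (when $2p\beta_q\le 2$) or with the uniform bound $\|u\|^2\le\tfrac{2p}{p-1}E(u)\le\tfrac{2pM}{p-1}$ (when $2p\beta_q>2$) leads to an inequality $1\le C''\nu^\gamma$ with $\gamma>0$, impossible once $\nu<\mu$ and $\mu$ is small enough. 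Choosing $M=\lambda_{1,0,0}/2$ fixes the threshold $\mu^*$. The only genuinely new point---and the main obstacle---is the strong-convergence step for the nonlocal term, which rests on HLS plus a Brezis--Lieb-type decomposition; once that is in place, the rest is a direct transcription of the perturbation argument from Section~\ref{secp100}, and the stated range $\lambda\in[0,\lambda_{1,1,1})$ (read as $[0,\lambda_{1,0,0}]$ for the Dirichlet problem) follows from $\lambda\le 2c_\infty/\mu\le\lambda_{1,0,0}$.
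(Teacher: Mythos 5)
Your proposal is correct and follows exactly the route the paper intends: the paper omits the proof of this Choquard theorem, stating only that it is ``similar to those presented earlier,'' i.e.\ the perturbation scheme of Section~\ref{secp100}, and your Hardy--Littlewood--Sobolev plus Gagliardo--Nirenberg reduction (with $q=2Np/(2N-\alpha)\in(2,2^*)$ precisely under the stated range of $p$), the identity $\langle\Psi'(u),u\rangle=2p\Psi(u)$ replacing ($f_4$), and the non-existence lemma with exponent $\gamma=p-1>0$ are precisely the required adaptations. The only cosmetic point is the multiplier interval: your argument yields $\lambda\in[0,\lambda_{1,0,0}]$, which is what the statement's $[0,\lambda_{1,1,1})$ should read for this Dirichlet problem, as you correctly note.
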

\begin{theorem}
Suppose that $\alpha \in (0,N)$ and $p$ satisfies
$
2-\frac{\alpha}{N}<p<\frac{2^*}{2}\left(2-\frac{\alpha}{N} \right)
$. Then, there exists $\mu^{*}_{m} > 0$ depending only on $\Omega$, $f$ such that, for any $0 < \mu < \mu^{*}_{m}$, problem \eqref{choquard}  has at least $m$ nontrivial solutions $(u_1,\bar{\lambda}_1), (u_2,\bar{\lambda}_2),\cdots,(u_m,\bar{\lambda}_m) \in H_0^{1}(\Omega)\times[0,+\infty)$.
\end{theorem}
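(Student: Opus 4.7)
My plan is to transplant, almost verbatim, the Fountain-type perturbation scheme developed in Section \ref{secp4} for problem $(P)^\mu_{1,0,0}$ to the Choquard setting. I would introduce the energy functional on $H^1_0(\Omega)$
$$E(u) := \frac{1}{2}\|u\|^2 - \frac{1}{2p}\int_\Omega\!\!\int_\Omega \frac{|u(x)|^p|u(y)|^p}{|x-y|^\alpha}\,dx\,dy,$$
and perturb it by $E_{r,\mu}(u) := E(u) - f_r(\|u\|_2^2/\mu)$ on $U_\mu$. The first observation I would exploit is that the Choquard nonlinearity $f(u)(x) := \bigl(\int_\Omega |u(y)|^p/|x-y|^\alpha\,dy\bigr)|u|^{p-2}u$ satisfies $f(u)u \equiv 2p\,F(u)$, so condition ($f_4$) holds with equality and exponent $q = 2p > 2$; hence the boundedness of Cerami sequences for $E_{r,\mu}$ (second half of Lemma \ref{lembounded}) applies without change once $r > p$.

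The analytic backbone of the remaining steps will be the combination of the Hardy--Littlewood--Sobolev inequality
$$\int_\Omega\!\!\int_\Omega \frac{|u(x)|^p|u(y)|^p}{|x-y|^\alpha}\,dx\,dy \;\leq\; C_{HLS}\,\|u\|_{q^*}^{2p}, \qquad q^* := \frac{2Np}{2N-\alpha},$$
with the Gagliardo--Nirenberg inequality \eqref{eqgn}. The hypothesis $2-\alpha/N<p<\tfrac{2^*}{2}(2-\alpha/N)$ is exactly $2<q^*<2^*$, which delivers the compact embedding $H^1_0(\Omega)\hookrightarrow L^{q^*}(\Omega)$ and the compactness of the associated bilinear form. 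I would then replay the proofs of Lemma \ref{lemmpg} (mountain-pass geometry of $E_{r,\mu}$ near $0$) and of Lemmas \ref{lemunr}--\ref{lemur} (strong convergence of Cerami sequences) with only notational changes. Using the eigenfunctions of $(-\Delta,H^1_0(\Omega))$ and choosing $\rho_{r,j}=\sqrt{\mu\lambda_j}$ and $\xi_{\varepsilon,j}=\sqrt{\tfrac{k-1}{k}\mu\lambda_j}$, the Fountain argument of Lemma \ref{lemfountain} would produce $a_{r,j}=-1$, the upper bound $c_{\infty,j}\leq \mu\lambda_j/2$ (by testing on $t\varphi_j$ and using nonnegativity of the Choquard term), and a lower bound $\widetilde{b_{r,j}}\geq \mu\lambda_j/2 - C\,\mu^{p}\,\lambda_j^{\,p\beta_{q^*}}-\varepsilon$.

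The final ingredient I would establish is a non-existence lemma patterned on Lemma \ref{lemnex}(ii): if $u\in S_{1,0,0}(\nu)$ solves the Choquard equation with Lagrange multiplier $\lambda=0$ and $E(u)\leq M\nu$, then ($f_4$) with $q=2p$ bounds $\|u\|^2 \leq 2pM\nu/(p-1)$, while testing against $u$ and applying HLS together with \eqref{eqgn} gives
$$\|u\|^2 \;\leq\; C\,\|u\|^{\,2p\beta_{q^*}}\,\nu^{\,p(1-\beta_{q^*})}.$$
Matching these two inequalities forces a lower bound on $\nu$ and yields a contradiction for $\nu$ below an explicit threshold $\mu^*(N,\alpha,p,\Omega,M)>0$, with the usual two-case split according to whether $2p\beta_{q^*}$ is less than or greater than $2$. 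Given $m\in\mathbb{N}^+$, I would then iterate the Fountain scheme exactly as in the proof of Theorem \ref{th4}: pick indices $1=k_1<k_2<\cdots<k_m$ and a threshold $\widetilde{\mu_{k_m}}>0$ separating the Fountain levels $0<c_{\infty,1}<c_{\infty,k_2}<\cdots<c_{\infty,k_m}\leq \mu\lambda_{k_m}/2$, and set $\mu^*_m := \min\{\widetilde{\mu_{k_m}},\, \mu^*(N,\alpha,p,\Omega,\lambda_{k_m}/2)\}$. For $0<\mu<\mu^*_m$, Lemma \ref{lemur} rules out alternative (ii) at each critical level $c_{\infty,k_i}$, delivering the required $m$ nontrivial normalized solutions $(u_i,\bar{\lambda}_i)\in H^1_0(\Omega)\times[0,+\infty)$.

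The only real obstacle I foresee is the bookkeeping in the non-existence lemma: one must verify that the exponent $2p\beta_{q^*}$ crosses the value $2$ precisely at the $L^2$-critical Choquard exponent, so that a positive threshold $\mu^*$ can be extracted uniformly throughout the whole subcritical range $q^*\in(2,2^*)$. This amounts to a direct computation using $\beta_{q^*} = N(\tfrac12-\tfrac{1}{q^*})$ and the defining HLS relation $\tfrac{2p}{q^*}+\tfrac{\alpha}{N}=2$, and it places the argument in full parallel with the two-case structure of Lemma \ref{lemnex}(ii).
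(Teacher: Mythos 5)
Your proposal is correct and coincides with the proof the paper intends: for the Choquard application the paper omits details and simply invokes the perturbation--Fountain machinery of Sections \ref{secp100}--\ref{secp4}, which is exactly what you carry out, with the right identifications --- the Hardy--Littlewood--Sobolev exponent $q^*=\frac{2Np}{2N-\alpha}\in(2,2^*)$ replacing $(f_1)$, the identity $\langle \Psi'(u),u\rangle = 2p\,\Psi(u)$ giving $(f_4)$ with $q=2p>2$, and the two-case non-existence lemma (split at $2p\beta_{q^*}=2$, i.e.\ at the $L^2$-critical Choquard exponent) ruling out alternative (ii) of Lemma \ref{lemur}. No gaps beyond routine bookkeeping.
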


 \section{appendix} \label{appa}
    In this appendix, we present a mountain pass type theorem for $C^1$ functional defined on open subset, due to \cite{Bu,Es} that was used in Section \ref{secp100}.

    Let $X$ be a Banach space, and let $U$ be a open subset of $X$ satisfying $\partial U \neq \emptyset $. For $u \in X$ and $R>0$, define $B_R(u):=\{v \in X: \norm{v-u}_X<R\}$. Let $I \in C^1(U,\R)$ and assume that for each $u \in \partial U$, there exists $\varepsilon:=\varepsilon(u)>0$ such that  $I(v)\leq -1$  for all $v \in U\cap B_\varepsilon(u)$.

    Let $\beta \in C^\infty(\mathbb{R},\mathbb{R})$ be such that
$$
\begin{cases}
    \beta \equiv -1 &\text{ on } (-\infty,-1),\\
    \beta (t)=t, &\forall t\geq 0,\\
    \beta (t)\leq 0,  &\forall t\leq  0.
\end{cases}
$$
Define a new functional $J: X \to \mathbb{R}$ by
$$J(u) =
\begin{cases}
    \beta(I(u)) &\text{ if } u \in U,\\
    -1  &otherwise.
\end{cases}
$$
\begin{lemma}\label{lemc1}
    $J \in C^1(X,\R)$ and
    $$J'(u) =
\begin{cases}
    \beta'(I(u))I'(u) &\text{ if } u \in U,\\
    0  &\text{ if } u \in U^c.
    \end{cases}
$$
Moreover, if $J(u)>0$, then $u \in U$ and $J'(u) =\beta'(I(u))I'(u)=I'(u)$.
\end{lemma}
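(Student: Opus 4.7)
The plan is to split $X$ into three regions according to whether $u$ lies in $U$, in the interior of $U^c$, or on $\partial U$, and show that on a neighborhood of each such $u$ the function $J$ agrees with a $C^1$ expression. The first two pieces are routine: if $u \in U$, openness of $U$ gives a neighborhood $B_r(u)\subset U$ on which $J=\beta\circ I$ is the composition of $C^1$ maps, so by the chain rule $J'(u)=\beta'(I(u))I'(u)$; if $u$ belongs to the interior of $U^c$, then $J\equiv -1$ on a neighborhood of $u$, so $J'(u)=0$.

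The delicate case is $u\in\partial U$, and it is precisely here that the structural hypothesis on $I$ near the boundary will be used. Pick $\varepsilon>0$ with $I(v)\leq -1$ for every $v \in U\cap B_{\varepsilon}(u)$. Since $\beta$ is $C^\infty$ and $\beta\equiv -1$ on $(-\infty,-1)$, continuity forces $\beta\equiv -1$ on $(-\infty,-1]$, hence $\beta(I(v))=-1$ on $U\cap B_{\varepsilon}(u)$; combined with $J(v)=-1$ for $v \in B_{\varepsilon}(u)\setminus U$, this gives $J\equiv -1$ on the whole ball $B_{\varepsilon}(u)$. In particular $J'(u)=0$, and the same argument applied at every nearby boundary point produces a neighborhood on which $J'\equiv 0$. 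Continuity of $J'$ on $X$ then follows by patching: on $U$, $J'=\beta'(I(\cdot))I'(\cdot)$ is a composition of continuous maps; on $X\setminus\overline{U}$, $J'=0$; and on the balls just constructed around points of $\partial U$, $J'=0$. The formulas agree on overlaps because on those balls $I\leq -1$ forces $\beta'(I)=0$ as well.

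For the \emph{moreover} claim, assume $J(u)>0$. Since $J\equiv -1$ off $U$, necessarily $u \in U$ and $J(u)=\beta(I(u))>0$; because $\beta(t)\leq 0$ for $t\leq 0$, this forces $I(u)>0$, and on a neighborhood of $I(u)$ the function $\beta$ coincides with the identity, so $\beta'(I(u))=1$ and the first part of the lemma yields $J'(u)=I'(u)$. The only real obstacle is the boundary case above, where $I$ itself may fail to extend continuously across $\partial U$: the cushion $I\leq -1$ near $\partial U$ combined with the flattening of $\beta$ on $(-\infty,-1]$ is exactly what converts the potential discontinuity of $\beta\circ I$ into a locally constant value of $J$ across the boundary, making global $C^1$ regularity possible.
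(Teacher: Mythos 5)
Your proof is correct and takes essentially the same approach as the paper: $C^1$ regularity is immediate on $U$ and on the interior of $U^{c}$, and at a boundary point the hypothesis $I\leq -1$ on $U\cap B_{\varepsilon}(u)$ together with $\beta\equiv -1$ on $(-\infty,-1]$ makes $J$ locally constant equal to $-1$ on the whole ball, forcing $J'(u)=0$ and the continuity of $J'$; the ``moreover'' part is handled identically. If anything, your version is slightly more careful than the paper's, since you spell out that $\beta'(I(v))=0$ on these balls so the two formulas for $J'$ agree on overlaps.
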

\begin{proof}
    It is clear that $J \in C^1(U,\R)\cup C^1(\bar{U}^c,\R)$ with
$$
J'(u) =
\begin{cases}
    \beta'(I(u))I'(u) &\text{ if } u \in U,\\
    0  &\text{ if } u \in \bar{U}^c.
    \end{cases}
$$
and, if $J(u)>0$, then $u \in U$ and $J'(u) =\beta'(I(u))I'(u)=I'(u)$.
Hence, to prove Lemma \ref{lemc1}, it suffices to show that for any $u \in \partial U$, $J'(u)$ exits in a sense of Fr\'echet derivative. Here, we are going to show that  $J'(u)=0$ when  $u \in \partial U$, from where it follows $J'$ is continuous in $X$. By the definition of $I$ and $J$, we conclude that, for any $v \in U\cap B_\varepsilon(u)$, we have $J(v)=\beta(I(v))=-1=J(u)$ and $J'(v)=\beta'(I(v))I'(v)=0$. Moreover, since, for any $v \in U^c\cap B_\varepsilon(u)$, $J(v)=-1=J(u)$, we obtain $J'(u)=0$. Since $u \in \partial U$ is arbitrary, for any $v \in \partial U$, we conclude $J'(v)=0$, and thus, for any $v \in  U\cap B_\varepsilon(u)$, we have $J'(v)=0$, which completes the proof of Lemma \ref{lemc1}.
\end{proof}
\begin{theorem}\label{thmpopen}
 Let $X$, $U$, $I$ and $J$ be defined as above and further assume that $0 \in U$ and $I(0)=0$. If $$
c:=\inf_{\gamma \in \Gamma} \max _{t \in[0,1]} I(\gamma(t))>0,
$$
where
$$
\Gamma := \{\gamma \in C([0,1], U): \gamma(0)=0,\,  I(\gamma(1))<0\}.
$$
Then, there exists a Cerami sequence of $I$ at level $c$, i.e., a sequence $\{u_n\}\subset U$ satisfying
$$
I(u_{n}) \rightarrow c \text{ and } (1+\norm{u_{n}})\norm{I^{\prime}(u_{n})} \rightarrow 0 \quad \text{ as } n \to +\infty.
$$
\end{theorem}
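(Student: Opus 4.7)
The plan is to apply the classical Cerami-type mountain pass theorem to the \emph{globally defined} functional $J \in C^1(X,\mathbb{R})$ (guaranteed by Lemma \ref{lemc1}) and then transfer the conclusion back to $I$, exploiting the fact that every positive critical level of $J$ is attained inside $U$, where $J$ and $I$ coincide together with their derivatives.

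Concretely, I would introduce the enlarged minimax class and value
$$
\tilde{\Gamma} := \{\gamma \in C([0,1], X) : \gamma(0)=0,\ J(\gamma(1))<0\}, \qquad \tilde{c} := \inf_{\gamma \in \tilde{\Gamma}} \max_{t \in [0,1]} J(\gamma(t)),
$$
noting that, without loss of generality, $\beta$ may be chosen strictly increasing on $[-1,0]$ (this is compatible with its listed properties), so that $\beta(t)<0 \Leftrightarrow t<0$, and consequently $\Gamma \subset \tilde{\Gamma}$. Since $J(0) = \beta(I(0)) = 0$ and $\tilde{\Gamma}\neq\emptyset$, the classical Ekeland variational principle on $(\tilde{\Gamma}, d_\infty)$, combined with a weighted pseudo-gradient deformation (as in \cite{Bar, Ce}), produces a sequence $\{u_n\} \subset X$ with $J(u_n) \to \tilde{c}$ and $(1+\norm{u_n})\norm{J'(u_n)} \to 0$.

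The crucial step is the identification $\tilde{c} = c$. For the upper bound, any $\gamma \in \Gamma$ lies entirely in $U$ with $I(\gamma(1)) < 0$, hence $J(\gamma(1)) = \beta(I(\gamma(1))) < 0$, so $\gamma \in \tilde{\Gamma}$; at any point $t_0$ realizing the (positive) maximum of $I\circ\gamma$ we have $J(\gamma(t_0))=I(\gamma(t_0))$, yielding $\max_t J(\gamma(t)) = \max_t I(\gamma(t))$ and thus $\tilde{c} \leq c$. For the lower bound, I would fix $\gamma \in \tilde{\Gamma}$ and set $\tau_1 := \inf\{t \in [0,1] : \gamma(t) \notin U\}$ (with $\tau_1 = 1$ if $\gamma \subset U$). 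The standing hypothesis that $I(v) \leq -1$ on $U \cap B_{\varepsilon(u)}(u)$ for each $u \in \partial U$, together with continuity of $\gamma$, forces $I(\gamma(t)) \leq -1$ as $t \to \tau_1^-$ whenever $\tau_1 < 1$; picking $\delta>0$ small so that $\gamma|_{[0,\tau_1-\delta]}$ stays in $U$ and $I(\gamma(\tau_1-\delta)) \leq -1 < 0$, the reparameterized restriction belongs to $\Gamma$, so the maximum of $I\circ\gamma$ on $[0,\tau_1-\delta]$ is at least $c > 0$, and at this positive maximizer $J=I$, giving $\max_t J(\gamma(t)) \geq c$. If $\tau_1 = 1$, the condition $J(\gamma(1))<0$ forces $I(\gamma(1))<0$, so $\gamma \in \Gamma$ and the same conclusion holds.

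With $\tilde{c} = c > 0$ in hand, the conclusion is immediate: for $n$ large enough, $J(u_n) > 0$, whence Lemma \ref{lemc1} gives $u_n \in U$, $J(u_n) = I(u_n)$, and $J'(u_n) = I'(u_n)$; thus $\{u_n\} \subset U$ is a Cerami sequence for $I$ at level $c$. The main obstacle in this plan is the truncation argument establishing $\tilde{c} \geq c$: a path in $\tilde{\Gamma}$ that wanders outside $U$ must be cut off just before leaving, and only the boundary assumption ``$I \leq -1$ near $\partial U$ from inside'' guarantees that the truncated endpoint lies in $\{I<0\}$ as required by $\Gamma$. A minor but essential preliminary is the choice of the cutoff $\beta$ enforcing $\beta(t)<0$ strictly on $[-1,0)$, which secures the inclusion $\Gamma \subset \tilde{\Gamma}$.
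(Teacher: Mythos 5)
Your proposal is correct and takes essentially the same route as the paper's own proof: introduce the globally defined functional $J$ and the enlarged path class in $X$, identify the two minimax levels by truncating any path just before it first leaves $U$ (using the standing hypothesis $I\le -1$ near $\partial U$) and reparametrizing, apply the classical Cerami-type mountain pass theorem to $J$, and transfer the Cerami sequence back to $I$ via Lemma \ref{lemc1}. Your explicit normalization of $\beta$ (strictly negative on $[-1,0)$) to guarantee $\Gamma \subset \tilde{\Gamma}$ is in fact slightly more careful than the paper, which uses this inclusion implicitly; the only minor imprecision is the subcase $\tau_1 = 1$ with $\gamma(1)\in\partial U$, where $\gamma\notin\Gamma$ because $\Gamma$ requires paths valued in $U$, but there your own truncation argument (rather than the claim $\gamma\in\Gamma$) applies verbatim.
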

\begin{proof}
Define
$$
c':=\inf_{\gamma \in \Gamma'} \max _{t \in[0,1]} J(\gamma(t)),
$$
and
$$
\Gamma':= \{\gamma \in C([0,1], X): \gamma(0)=0,\,  J(\gamma(1))<0\}.
$$

Firstly, we show that $c= c'$. It follows from $J(u)=\beta(I(u))=u$ for all $u \in U$ with $I(u)>0$ that, for all $\gamma \in \Gamma$, we have $\max _{t \in[0,1]} J(\gamma(t))=\max _{t \in[0,1]} I(\gamma(t))  \geq c>0$. Since $\Gamma \subset \Gamma'$, we conclude that $c \geq c'$. On the other hand, by the definition of $c'$, for any $\varepsilon>0$, there exists a $\gamma_\varepsilon' \in \Gamma'$ such that $c'\leq\max _{t \in[0,1]} J(\gamma(t))< c'+\varepsilon$. It is clear that, if $\gamma_\varepsilon'([0,1])\cap U^c = \emptyset$, then $\gamma_\varepsilon' \in \Gamma$, and thus, $c < c'+\varepsilon$. As a result, we may assume that $\gamma_\varepsilon'([0,1])\cap U^c \neq \emptyset$. Let $$t_0:= \min\{t \in [0,1]:\gamma_\varepsilon'(t)\in U^c\}.$$
Since $B_{\rho}(0)\subset U$ for $\rho>0$ small enough, we know that $t_0 >0$. Then, by the definition of $t_0$, we know that, for all $t \in [0,t_0)$, $\gamma_\varepsilon'(t) \in U$. Thus, $\gamma_\varepsilon'(t_0) \in\partial U$. By the definition of $I$, there exists $\delta>0$ such that, for all $t \in (t_0-\delta,t_0)$, $I(\gamma_\varepsilon'(t))\leq -1<0$. Let $t_1 \in (t_0-\delta,t_0)$, and define
$$
\begin{aligned}
    \gamma_\varepsilon: [0,1]&\to U\\
     t &\mapsto \gamma_\varepsilon'\left(\frac{t}{t_1}\right).
\end{aligned}
$$
Then, $\gamma_\varepsilon \in \Gamma$, and
$$
c\leq \max _{t \in[0,1]} I(\gamma_\varepsilon(t))=\max _{t \in[0,1]} J(\gamma_\varepsilon(t))=\max _{t \in[0,t_1]} J(\gamma_\varepsilon'(t))\leq \max _{t \in[0,1]} J(\gamma_\varepsilon'(t)) < c'+\varepsilon.
$$
Since $\varepsilon>0$ is arbitrary, we conclude that $c=c'$.

In a view of \cite[Theorem 2.9 and 4.2]{W} and \cite{Bar,Ce}, since $c'=c>0$, we obtain a Cerami sequence of $J$ at level $c$, i.e., a sequence $\{u_n\}\subset X$ satisfying
$$
J(u_{n}) \rightarrow c \text{ and } (1+\norm{u_{n}})\norm{J^{\prime}(u_{n})} \rightarrow 0 \quad \text{ as } n \to +\infty.
$$
By Lemma \ref{lemc1}, we conclude that $u_n \in U$ for $n$ sufficiently large, and, up to a subsequence, we have
$$
I(u_{n}) \rightarrow c \text{ and } (1+\norm{u_{n}})\norm{I^{\prime}(u_{n})} \rightarrow 0 \quad \text{ as } n \to +\infty.
$$
\end{proof}
\subsection*{Conflict of interest}

On behalf of all authors, the corresponding author states that there is no conflict of interest.

\subsection*{Ethics approval}
 Not applicable.

\subsection*{Data Availability Statements}
Data sharing not applicable to this article as no datasets were generated or analysed during the current study.

\subsection*{Acknowledgements}
The authors would like to thank Rui Ding for helpful discussions and suggestions. C. Ji is supported by National Natural Science Foundation of China (No. 12171152).

  \end{document}